\renewcommand{\phi}{\varphi}
\renewcommand{\subset}{\subseteq}
\renewcommand{\emptyset}{\varnothing}
\newcommand{\Zd}{\mathbb{Z}^d}
\renewcommand{\Pr}{\mathbb P}
\def\1{ {\mathit{1} \!\!\>\!\! I} }
\newtheorem{theorem}{Theorem}
\numberwithin{theorem}{section}
\newtheorem{lemma}{Lemma}
\numberwithin{lemma}{section}
\newtheorem{proposition}{Proposition}
\numberwithin{proposition}{section}
\newtheorem{corollary}{Corollary}
\numberwithin{corollary}{section}
\newtheorem{definition}{Definition}
\numberwithin{definition}{section}
\newtheorem{remark}{Remark}
\numberwithin{remark}{section}
\newtheorem{open}{Open question}
\numberwithin{open}{section}
\newcommand{\beq}{\begin{eqnarray}}
\newcommand{\eeq}{\end{eqnarray}}
\newcommand{\be}{\begin{equation}}
\newcommand{\ee}{\end{equation}}
\newcommand{\bl}{\begin{lemma}}
\newcommand{\el}{\end{lemma}}
\newcommand{\br}{\begin{remark}}
\newcommand{\er}{\end{remark}}
\newcommand{\bt}{\begin{theorem}}
\newcommand{\et}{\end{theorem}}
\newcommand{\bd}{\begin{definition}}
\newcommand{\ed}{\end{definition}}
\newcommand{\bp}{\begin{proposition}}
\newcommand{\ep}{\end{proposition}}
\newcommand{\bc}{\begin{corollary}}
\newcommand{\ec}{\end{corollary}}
\newcommand{\bpr}{\begin{proof}}
\newcommand{\epr}{\end{proof}}
\newcommand{\bi}{\begin{itemize}}
\newcommand{\ei}{\end{itemize}}
\newcommand{\ben}{\begin{enumerate}}
\newcommand{\een}{\end{enumerate}}
\newcommand{\eqn}[2]{\begin{equation}\label{#1}#2\end{equation}}
\newcommand{\eqnst}[1]{\begin{equation*}#1\end{equation*}}
\newcommand{\eqnspl}[2]{\begin{equation}\begin{split}\label{#1}%
    #2\end{split}\end{equation}}
\newcommand{\eqnsplst}[1]{\begin{equation*}\begin{split}%
    #1\end{split}\end{equation*}}
\newcommand{\Z}{\mathbb Z}
\newcommand{\R}{\mathbb R}
\def\Vol{\mathrm{Vol}}
\def\now{
\ifnum\time<60
          12:\ifnum\time<10 0\fi\number\time am
          \else
            \ifnum\time>719\chardef\a=`p\else\chardef\a=`a\fi
          \hour=\time
          \minute=\time
          \divide\hour by 60
          \ifnum\hour>12\advance\hour by -12\advance\minute by-720 \fi
          \number\hour:%
          \multiply\hour by 60
          \advance\minute by -\hour
          \ifnum\minute<10 0\fi\number\minute\a m\fi}
\numberwithin{equation}{section}
\theoremstyle{remark}
\def\Av{{\mathrm{Av}}}
\def\es{\emptyset}
\def\LE{\mathrm{LE}}
\def\bp{\overline{p}}
\def\last{\mathrm{last}}
\newcommand{\caD}{{\mathcal D}}
\newcommand{\caH}{{\mathcal H}}
\newcommand{\caR}{{\mathcal R}}
\begin{document}

\title{{\bf Approaching criticality via the zero dissipation limit in the abelian avalanche model}}

\author{
Antal A.~J\'arai
\footnote{Department of Mathematical Sciences,
University of Bath,
Claverton Down, Bath, BA2 7AY, United Kingdom,
Email: {\tt A.Jarai@bath.ac.uk}}\\
Frank Redig
\footnote{Delft Institute of Applied Mathematics, Technische Universiteit Delft,
 Mekelweg 4, 2628 CD Delft, Nederland,
Email: {\tt F.H.J.Redig@tudelft.nl}}\\
Ellen Saada
\footnote{CNRS, UMR 8145, MAP5, Universit\'{e} Paris Descartes,
Sorbonne Paris Cit\'{e},
45 rue des Saints-P\`eres,
75270 Paris cedex 06, France,
Email: {\tt Ellen.Saada@mi.parisdescartes.fr}}
}

\maketitle

\footnotesize
\begin{quote}
{\bf Abstract}:
The discrete height abelian sandpile model was introduced
by Bak, Tang \& Wiesenfeld  and Dhar 
as an example for the concept of
self-organized criticality. When the
model is modified to allow grains to disappear on each toppling,
it is called bulk-dissipative. 
We provide a detailed study of a continuous
height version of the abelian sandpile model,
called the abelian avalanche model, which allows an
arbitrarily small amount of dissipation to take place on every
toppling. We prove that for non-zero dissipation, the
infinite volume limit of the stationary measure
of
the abelian avalanche model exists and can be
obtained via a weighted spanning tree measure.
We show that in the whole non-zero dissipation regime, the model
is not critical, i.e., spatial covariances of local observables decay exponentially.
We then study the zero dissipation limit and
prove that the self-organized critical model
is recovered, both for the stationary measure and
for the dynamics.
We obtain rigorous bounds on toppling probabilities and
introduce an exponent describing their scaling at
criticality. We rigorously establish the mean-field value
of this exponent for $d > 4$.
\end{quote}
\normalsize

{\bf Key-words}: abelian sandpile model, abelian avalanche model,
toppling probability exponent,
burning algorithm, weighted spanning trees, Wilson's algorithm,
zero dissipation limit, self-organized criticality.
\vspace{12pt}

\newpage
\tableofcontents
\section{Introduction}

The {\em abelian sandpile model}, introduced by Bak, Tang \&
Wiesenfeld \cite{btw}, is a paradigm of the phenomenon
of {\em self-organized criticality}.
Its fundamental properties, including the
abelianness, the set of recurrent configurations,
and the group of toppling operators, was discovered by Dhar \cite{Dhar}.

Just as for ordinary critical models in statistical mechanics, such as the Ising model or percolation, it is useful to add to the model
a ``magnetic field'' (or a ``mass'') in order to tune it away from criticality, and in such a way that
when the field becomes very small, new results can be obtained for the original critical model, for instance critical exponents,
decay of covariances or percolation properties.

In the abelian sandpile model, a procedure to tune away from criticality is to introduce ``bulk dissipation'',  i.e., to impose that
in the bulk, upon each toppling mass is lost (whereas for the critical model this happens only at the boundary).
Because to go back to criticality we want to tune
this amount of lost mass to zero, we need a version of the abelian sandpile model with continuous heights.
There are at least two reasons for considering such models. On the one hand,
useful information can be gained about the critical model by letting the
bulk dissipation go to zero; see for example \cite{mru}.
On the other hand, with applications on random networks in mind,
the underlying graph may have no natural boundary, but instead,
dissipation can be present at all or some of the vertices.

A continuous model with deterministic additions, called the {\em abelian avalanche model}, has been introduced on finite graphs by Gabrielov \cite{G93}.
We study a stochastic variant of this model,  which
has a dissipation parameter $\gamma\geq 0$, and we still call it
the abelian avalanche model. For $\gamma=0$ it is the natural continuous
analogue of the (critical) abelian sandpile model. Moreover, this model generalizes
the discrete dissipative models with integer dissipation
studied by Maes, Redig \& Saada  in \cite{mrs2}.

\medbreak

The main results of our paper are summarized as follows.
\begin{enumerate}
\item {\em Existence of infinite-volume limits for all $\gamma>0$.}
We prove the existence of the infinite-volume limit for the stationary measures and for the stochastic dynamics
of the
finite-volume abelian avalanche model.
\item {\em Non-criticality for every positive dissipation.}
For every positive value of the dissipation,
the 
infinite-volume model is not critical, i.e.,
covariances of local functions as well as avalanche sizes decay exponentially.
This also extends the results in
\cite{mrs2} for arbitrary small dissipation.
\item  {\em Criticality for $\gamma\to 0$ and the toppling probability exponent.}
We prove that for $\gamma\to 0$ the critical model is recovered, both for the stationary
measures and for the dynamics. We prove lower bounds on the probability
that vertex $x$ topples in the dissipative model, and achieve a sharp lower
bound when $d > 4$. In particular, we introduce the
{\em toppling probability exponent} that describes the scaling
of the above probability at criticality, and establish rigorously
its mean-field value above the conjectured upper
critical dimension, i.e.~for $d > 4$. 
\end{enumerate}

\section{Overview of the models and outline of the paper}
\subsection{Standard (discrete) abelian sandpile model}
Let us start by briefly recalling the standard (i.e., discrete) abelian sandpile model.  Let
$\Lambda \subseteq \mathbb{Z}^d$ be a finite set. A \emph{configuration}
on $\Lambda$ is a collection of particles occupying the sites
in $\Lambda$, specified by a map
$\eta : \Lambda \to \{ 0, 1, \dots \}$.
If $\eta_x \ge 2d$ for $x\in\Lambda$, $x$ is allowed to
\emph{topple}, that is to send one particle along each edge
incident to $x$ in $\mathbb{Z}^d$. Particles reaching
$\mathbb{Z}^d \setminus \Lambda$ are \textit{lost}, i.e. disappear.
We say that $\eta$ is \emph{stable} if no site can topple,
that is $\eta_x < 2d$ for all $x \in \Lambda$.

We define a Markov chain on the set of stable configurations
as follows. At each time step, we add a particle to a stable
configuration $\eta$ at a
randomly chosen site in $\Lambda$, then carry out all possible
topplings (this succession of topplings is called an \textit{avalanche}
and its final result on the configuration is called stabilization)
until a new stable configuration $\eta'$ is reached. Going from $\eta$ to
$\eta'$ is then a single transition in the Markov chain. It was shown
by Dhar \cite{Dhar} that the resulting stable configuration
does not depend on the order of topplings (hence the name
``abelian''), and that the stationary distribution, denoted by
$\nu_\Lambda$, is unique and uniformly distributed on all
recurrent states.

The abelian sandpile model is \emph{critical}, in the sense that
correlations in the stationary measure have \textit{power law
decay} (see \cite{md2} by Majumdar \& Dhar): for all $d \ge 2$ there is a constant
$c = c(d)$ such that
\begin{equation*}
 \lim_{\Lambda \uparrow \mathbb{Z}^d} \left\{
    \nu_\Lambda ( \eta_0 = 0,\, \eta_x = 0 )
    - \nu_\Lambda ( \eta_0 = 0 ) \nu_\Lambda (\eta_x = 0 ) \right\}
   = c |x|^{-2d}(1+o(1)), \quad \text{as $|x| \to \infty$,}
   \end{equation*}
where $|x|$ denotes the Euclidean norm of $x$, and where $o(1)$ goes to zero as $|x|\to\infty$. 

\subsection{Discrete dissipative sandpile model}
The following  modification  of the abelian sandpile  model,
called the \emph{discrete dissipative sandpile model} was introduced in \cite{japon}
by Tsuchiya \& Katori and studied rigorously
by Maes, Redig \& Saada in \cite{mrs2}.  Let $\gamma \ge 1$ be {\em an integer},
and call a configuration \emph{$\gamma$-stable} if
$\eta_x < 2d + \gamma$ for all $x\in\Lambda$. A site $x\in \Lambda$ is allowed to
\emph{$\gamma$-topple} if $\eta_x \ge 2d + \gamma$, which means that
 it sends $1$ particle along each edge incident to $x$ in $\mathbb{Z}^d$
\textit{and} $\gamma$ particles are lost, or \emph{dissipated}, thereby
decreasing the number of particles at $x$ by $2d + \gamma$.
The presence of dissipation introduces exponential decay of
avalanche sizes, hence the model becomes non-critical (for {\em large values of $\gamma$} it was
shown in \cite{mrs2} that also covariances of local observables decay exponentially). 
The exponential decay of the massive Green's function enabled  in \cite{mrs2} to
define the dissipative sandpile model on $\mathbb{Z}^d$, and to extend
Dhar's formalism of the sandpile group (\cite{Dhar}) to the infinite case.

\subsection{Continuous dissipation}
Mahieu \& Ruelle \cite{mru} considered the limit $\gamma \downarrow 0$ in
analytical expressions obtained for stationary expectation of special observables
in the discrete dissipative sandpile model,
these making sense for any real $\gamma > 0$.  It is a
natural question whether the limit $\gamma \downarrow 0$
can be made sense of in an appropriate continuous model.
The problem of properly
defining this model was circumvented in the physics literature,
where only specific correlation functions involving the Green's
function were considered, within which then the massive Green's function can be substituted.
Our goal in this paper is to define and establish rigorously some of the basic
properties of such a \textit{continuous height} dissipative sandpile
model and to use these to derive properties of the critical model.

Let us now briefly introduce this  \textit{abelian
avalanche model}. It is a variant
of a model introduced by Gabrielov \cite{G93}, in which the
system is driven deterministically; 
however, as we make use of it
later, the main results of \cite{G93} apply to the stochastic case. 

By a (continuous) \emph{configuration} we mean a map
$\eta : \Lambda \to [0,\infty)$, referred to as a collection
of \emph{heights}. Let $\gamma \ge 0$ be a
\textit{real} parameter. We say that $\eta$ is
\emph{$\gamma$-stable} if $\eta_x \in [0, 2d + \gamma)$
for all $x \in \Lambda$. We say that $x$ is \emph{allowed to
$\gamma$-topple} if $\eta_x \ge 2d + \gamma$. In this
case a $\gamma$-toppling at $x$ means that height $1$ is sent
along each edge incident to $x$ in $\mathbb{Z}^d$, and height
$\gamma$ is lost, thereby decreasing the height at $x$
by $2d + \gamma$. As in the discrete case it holds,
by essentially the same proof, that any configuration
has a unique $\gamma$-stabilization arrived at by carrying out
all possible $\gamma$-topplings.

We define a pure jump Markov process on the set of $\gamma$-stable continuous
configurations as follows. Let $\{ \varphi(x) \}_{x \in \Lambda}$
be a collection of positive rates. For $x \in \Lambda$, height $1$ is added at
vertex $x$ according to
a Poisson process with rate $\varphi(x)$ (Poisson
processes associated to different sites
are mutually independent). After
an addition of height $1$ has occurred, the configuration is instantaneously
$\gamma$-stabilized by carrying out all possible $\gamma$-topplings.
By analogy with \cite{G93}, we call this dynamics the abelian avalanche model.

\subsection{Outline of the paper}
Section \ref{sec:model} collects basic properties of the
finite volume abelian avalanche model: In Section \ref{ssec:defns} we give
the precise definitions of $\gamma$-toppling matrices, $\gamma$-stabilization
and $\gamma$-toppling numbers. We discuss properties of the set
$\mathcal{R}_\Lambda$ of ``recurrent'' configurations in
Section \ref{ssec:stat}. The natural invariant measure of this model is the
normalized Lebesgue measure on $\mathcal{R}_\Lambda$,
denoted by $m^{(\gamma)}_\Lambda$.

In Section \ref{ssec:rational} we consider rational $\gamma = k/n$.
In that case the fractional part of the vector
$(n \eta_x)_{x \in \Lambda}$ is invariant under the dynamics.
When this is factored out,
one obtains the discrete dissipative sandpile model,
with an ergodic dynamics.

Due to the toppling rule, our continuous model retains some
discreteness, for all values of $\gamma$.
In fact, continuous heights will only be used to define the dynamics;
the stationary measure has a natural description in terms of appropriate
discrete height variables, that we define in Section \ref{ssec:trees}.
Those discretized heights allow us to adapt the burning
bijection established by Majumdar \& Dhar in \cite{MD92}, and to
define a coding of those configurations with discretized heights
in terms of weighted spanning trees. The coding by weighted spanning
trees is extended to waves (an avalanche is decomposed in a succession
of \textit{waves}) in Section \ref{ssec:waves}.

In Section \ref{ssec:ergod} we discuss ergodic properties of
the finite volume abelian avalanche 
model. We show that when
$\gamma$ is irrational, the stationary process started from
the invariant measure $m^{(\gamma)}_\Lambda$ is ergodic in time.
Another natural question is related to the following
transformation: add unit height at a fixed vertex, and
$\gamma$-stabilize. This is a measure preserving transformation
on $(\mathcal{R}_\Lambda, m^{(\gamma)}_\Lambda)$. We give a criterion for
the transformation to be ergodic, and provide examples
when the criterion can be verified.

The weak limit of the stationary measure
$m^{(\gamma)} = \lim_{\Lambda \uparrow \mathbb{Z}^d} m^{(\gamma)}_\Lambda$
is constructed in Section \ref{sec:stat-infinite},
using the burning bijection and Wilson's algorithm \cite{Wilson}.
We prove exponential decay of correlations of
local functions, for all $\gamma > 0$, in Theorem \ref{ijsbeer}.
As $\gamma \downarrow 0$, the (discretized) stationary
measure on $\mathbb{Z}^d$ converges to the critical sandpile
measure; this is shown in Theorem \ref{lem:zerolim}.
For certain special local events an estimate on
the rate of convergence is given in
Proposition \ref{prop:rate}.

The abelian avalanche model
on $\mathbb{Z}^d$ is considered in Section \ref{sec:dynamics}.
Theorem \ref{thm:dissip-dyn}
proves the existence of a sandpile Markov process
on $\mathbb{Z}^d$ with dissipation. In section \ref{avtail}, Theorem \ref{thm:waves}
gives estimates on
toppling probabilities 
under $m^{(\gamma)}$,
and Theorem \ref{thm:new-av-lb}
 establishes the mean-field value of the toppling
probability exponent in dimensions $d > 4$. 

In Section \ref{sec:zero-dissip-lim} we prove that
the abelian avalanche model
on $\mathbb{Z}^d$ converges, as $\gamma \downarrow 0$, to the critical (non-dissipative)
abelian sandpile process, when the latter is known to exist on $\mathbb{Z}^d$;
see Theorem \ref{thm:converge}.

Our models will live on $\mathbb{Z}^d$, and for simplicity, we only consider
the nearest neighbor case.
 Presumably one can extend our
constructions to other infinite graphs on which each tree of the wired uniform
spanning forest has one end; see \cite{Jarai10b,JW12}.

\section{The abelian avalanche model in finite volume}
\label{sec:model}
\subsection{Toppling matrices, stabilization, toppling numbers}
\label{ssec:defns}
For $x,y \in \mathbb{Z}^d$, we write $|x-y|$ for their Euclidean
distance, and
$\| x-y\|_1 := \sum_{i=1}^d |x_i-y_i|$. We denote $x \sim y$ if
$x$ and $y$ are neighbors, that is $|x - y| = 1$.
Each site $x$ will carry a \textit{continuous} height variable with value
in the interval $[0,2d + \gamma)$, where $\gamma \ge 0$ is a
\textit{real} parameter.
On a $\gamma$-\textit{toppling}, a site will give height $1$ to each
of its neighbors, and lose height $2d + \gamma$, that is,
an amount $\gamma$ of height is dissipated on each toppling
(when it is clear in context, we sometimes write toppling instead of $\gamma$-toppling). 
This can be summarized using the $\gamma$-\emph{toppling matrix} $\Delta^{(\gamma)}$ with elements 
\begin{equation*}
  \Delta^{(\gamma)}_{xy}
  = \begin{cases}
    2d + \gamma & \text{if $x = y$;}\\
    -1          & \text{if $x \sim y$;}\\
    0           & \text{otherwise.}
    \end{cases}
    \end{equation*}
For $\Lambda \subseteq \mathbb{Z}^d$, we write
$\Delta^{(\gamma)}_\Lambda = ( \Delta^{(\gamma)}_{xy} )_{x,y \in \Lambda}$
for the $\gamma$-toppling matrix restricted to $\Lambda$.

We define the sets of \emph{(continuous) height configurations}
in $\mathbb{Z}^d$ and in $\Lambda$ by
\begin{equation*}
 {\mathcal X}
  = [0, \infty)^{\mathbb{Z}^d} \qquad \text{and}\qquad
  {\mathcal X}_\Lambda
  = [0, \infty)^\Lambda.
  \end{equation*}
A site $x$ is called $\gamma$-{\sl stable} in configuration
$\eta$, if $\eta_x < \Delta^{(\gamma)}_{xx} = 2d + \gamma$;
otherwise, $x$ is $\gamma$-{\sl unstable} in configuration $\eta$.
The sets of $\gamma$-stable configurations are denoted by
\begin{equation*}
\Omega^{(\gamma)}
  = [0, 2d + \gamma)^{\mathbb{Z}^d} \qquad \text{and}\qquad
  \Omega^{(\gamma)}_\Lambda
  = [0, 2d + \gamma)^\Lambda.
  \end{equation*}
Sometimes we will need configurations that are only stable
in $\Lambda$ and are unrestricted in $\mathbb{Z}^d \setminus \Lambda$,
so we will also use
\begin{equation*}
 \overline{\Omega}^{(\gamma)}_\Lambda
  = \{ \eta \in {\mathcal X} : \eta_x < 2d + \gamma,\, x \in \Lambda \}.
  \end{equation*}
A $\gamma$-{\sl toppling} of site $x$ in $\Lambda$ is defined
by the operator $T^{(\gamma)}_{\Lambda,x}$,
via the formula
\begin{equation}\label{top}
  \left(T^{(\gamma)}_{\Lambda,x} \eta \right)_y
  = \eta_y - \Delta^{(\gamma)}_{xy}, \qquad
    y \in \Lambda, \eta \in {\mathcal X}_\Lambda.
\end{equation}
We also define $T^{(\gamma)}_{\Lambda,x} \eta$, when $\eta \in {\mathcal X}$.
In this case, the values of $\eta_y$ for $y \in \mathbb{Z}^d \setminus \Lambda$
are left unchanged, and for $y \in \Lambda$ they change
according to \eqref{top}.

 A $\gamma$-toppling is called \emph{legal} if the toppled
site $x\in\Lambda$ is $\gamma$-unstable before toppling, which ensures
that each component of $T^{(\gamma)}_{\Lambda,x} \eta$ is still
non-negative. A sequence of $\gamma$-topplings at sites
$(x_1,\ldots,x_n)\in\Lambda^n$ is called \emph{$(\Lambda,\gamma)$-stabilizing}
for $\eta \in {\mathcal X}_\Lambda$, if each toppling can be carried out and
the final result is $\gamma$-stable; that is, if
\begin{itemize}
\item[{\em (i)}] $T^{(\gamma)}_{\Lambda,x_k}$ is a legal $\gamma$-toppling of
 $T^{(\gamma)}_{\Lambda,x_{k-1}} \circ \dots \circ
 T^{(\gamma)}_{\Lambda,x_1} \eta$,
 $1 \le k \le n$;
\item[{\em (ii)}] the final configuration is
in $\Omega^{(\gamma)}_\Lambda$.
\end{itemize}
Note that for all finite $\Lambda\subseteq\mathbb{Z}^d$ and all
$\eta \in {\mathcal X}_\Lambda$, a $(\Lambda,\gamma)$-stabilizing sequence exists.
The number of times a site
topples does not depend on the
$(\Lambda,\gamma)$-stabilizing sequence, and hence there
is a well-defined $\gamma$-\emph{stabilization map}
${\mathcal S}^{(\gamma)}_\Lambda : {\mathcal X}_\Lambda \to \Omega^{(\gamma)}_\Lambda$,
see e.g.\  \cite{Dhar}, or \cite[Appendix B]{G93} for a proof.
The succession of
topplings in a $\gamma$-stabilization
is called a $\gamma$-avalanche.

The result of $\gamma$-stabilization is related to the original
configuration as follows. Let $N^{(\gamma)}_\Lambda(\eta)$ be the
vector consisting of the $\gamma$-\emph{toppling numbers} associated
to the $(\Lambda,\gamma)$-stabilization of $\eta$, i.e.,
$(N^{(\gamma)}_\Lambda(\eta))_x$ is the number of times $x \in \Lambda$ 
topples during $\gamma$-stabilization of $\eta$ in $\Lambda$.
Then by \eqref{top}, the net effect of all 
topplings can be written as
\begin{equation}\label{e:stabilize}
 ({\mathcal S}^{(\gamma)}_\Lambda \eta)_y
  = \eta_y - \sum_{x \in \Lambda}
    \Delta^{(\gamma)}_{yx}(N^{(\gamma)}_\Lambda(\eta))_x
  = \eta_y - (\Delta^{(\gamma)}_\Lambda N^{(\gamma)}_\Lambda(\eta) )_y,
    \qquad y \in \Lambda.
    \end{equation}  
Similarly, $\gamma$-stabilization in volume $\Lambda$ can also be viewed as a
map ${\mathcal S}^{(\gamma)}_\Lambda: {\mathcal X} \to \overline{\Omega}^{(\gamma)}_\Lambda$,
where the coordinates outside $\Lambda$ are left unchanged.

The \emph{addition operators} are defined by
adding height $1$ at a site $x$, and then $\gamma$-stabilizing:
\begin{equation}\label{ax}
 a^{(\gamma)}_{\Lambda,x}
    : \Omega^{(\gamma)}_\Lambda \to \Omega^{(\gamma)}_\Lambda ;\qquad\qquad
  a^{(\gamma)}_{\Lambda,x} \eta
  = {\mathcal S}^{(\gamma)}_\Lambda (\eta + \delta_x),
  \end{equation}
where $\delta_x$ denotes the vector having entry equal
to one at site $x$ and zero elsewhere.
The addition operators commute, that is, for all
$x,y\in\Lambda$, $\eta\in\Omega^{(\gamma)}_\Lambda$,
\begin{equation}\label{com}
 a^{(\gamma)}_{\Lambda,x} ( a^{(\gamma)}_{\Lambda,y} \eta)
 = a^{(\gamma)}_{\Lambda,y} (a^{(\gamma)}_{\Lambda,x} \eta).
\end{equation}
This follows from the fact that $\gamma$-stabilization
is well-defined: indeed both expressions in \eqref{com}
are equal to ${\mathcal S}^{(\gamma)}_\Lambda (\eta + \delta_x + \delta_y)$.

We endow ${\mathcal X}$, ${\mathcal X}_\Lambda$, $\Omega$, $\Omega_\Lambda$
with the product metric
\begin{equation}\label{e:dist}
 {\mathrm{dist}}(\eta_1,\eta_2)
  = \sum_{x} 2^{-|x|} \min\{ |(\eta_1)_x - (\eta_2)_x|, 1 \},
  \end{equation}
where the sum is over $\mathbb{Z}^d$ or over $\Lambda$.

Given a function ${\varphi}: \Lambda \to (0,\infty)$, we define a
jump Markov process on $\gamma$-stable configurations.
The action of the generator on Borel measurable functions
$f : \Omega^{(\gamma)}_\Lambda \to {\mathbb R}$ is given by
\begin{equation}\label{gendis}
 {\mathcal{L}}_\Lambda f(\eta)
 = {\mathcal{L}}^{\gamma,{\varphi}}_\Lambda f(\eta)
 = \sum_{x\in\Lambda} {\varphi}(x)
   \left[ f \left( a^{(\gamma)}_{\Lambda,x} \eta \right)
   - f(\eta) \right].
\end{equation}
The above process is described in words as follows:
at each site $x \in \Lambda$ we have a Poisson process with
intensity ${\varphi}(x)$, and at different sites these
processes are independent. At the event times of this
Poisson process we apply the addition operator
$a^{(\gamma)}_{\Lambda,x}$ to the configuration.

 Given a measure $\mu$, we denote by ${\mathbb E}_\mu$ expectation with respect to
 $\mu$. 

\subsection{Stationary measure, Dhar's formula}
\label{ssec:stat}
As in the discrete case \cite{Dhar}, there is a subset
$\mathcal{R}^{(\gamma)}_\Lambda \subseteq \Omega^{(\gamma)}_\Lambda$, called
the set of ``recurrent configurations'', such that
any invariant measure is concentrated on $\mathcal{R}^{(\gamma)}_\Lambda$.
This is described via the notion of $\gamma$-\emph{allowed configuration}
defined below. A \emph{$\gamma$-forbidden subconfiguration} ($\gamma$-FSC)
is a pair $(W,\eta_W)$ where  $\varnothing \not= W \subseteq \mathbb{Z}^d$  is finite,
$\eta_W \in {\mathcal X}_W$, such that for all $y \in W$,
the number of neighbors of $y$ in $W$ is strictly larger than the height
 $(\eta_W)_y$:
\begin{equation}\label{fscg}
  (\eta_W)_y
  < \sum_{z: z \in W,\, z \not= y} (-\Delta^{(\gamma)}_{zy}).
\end{equation}
\begin{definition}\label{def:allowed}
A configuration $\eta\in\Omega^{(\gamma)}_\Lambda$
(respectively $\eta \in \Omega^{(\gamma)}$)
is called
$\gamma$-{\rm allowed}
if there does not exist finite $W\subseteq\Lambda$ (respectively
$W \subseteq \mathbb{Z}^d$) such that
the pair consisting of $W$ and the restriction $\eta_W$ of $\eta$
to $W$ is a $\gamma$-FSC.
\end{definition} 
\begin{remark}
\label{rem:FSC}
Since the off-diagonal elements of the $\gamma$-toppling matrix
$\Delta^{(\gamma)}$ do not depend on $\gamma$, the right hand
side of inequality \eqref{fscg} is independent of $\gamma$.
Therefore we have the same forbidden subconfigurations for
any value of $\gamma$.  So from now on we use the words FSC
rather than $\gamma$-FSC, and allowed rather than
$\gamma$-allowed.
\end{remark}
Let
\begin{equation*}\begin{split}
 {\mathcal R}^{(\gamma)}_\Lambda
  &= \{ \eta \in \Omega^{(\gamma)}_\Lambda :
  \text{$\eta$ is allowed} \}, \\
  {\mathcal R}^{(\gamma)}
  &= \{ \eta \in \Omega^{(\gamma)} :
  \text{$\eta$ is allowed} \}:= \{ \eta \in \Omega^{(\gamma)} :
  \text{$\eta_V \in {\mathcal R}^{(\gamma)}_V$
  for all finite $V \subseteq \mathbb{Z}^d$} \}.
  \end{split}\end{equation*}
The results of \cite[Sections 3,4]{G93} imply the
following properties of allowed configurations.
 We write $\mathrm{Vol}$ for Lebesgue measure
on $\R^\Lambda$. 
\begin{proposition}\label{gabrielov}
\begin{itemize}
\item[(i)] The addition operator $a^{(\gamma)}_{\Lambda,x}$ maps
  ${\mathcal R}^{(\gamma)}_\Lambda$ one-to-one and onto itself.
\item[(ii)] ${\mathrm{Vol}}({\mathcal R}^{(\gamma)}_\Lambda)
  = \det(\Delta^{(\gamma)}_\Lambda)$.
\item[(iii)] Lebesgue measure on ${\mathcal R}^{(\gamma)}_\Lambda$
is invariant under $a^{(\gamma)}_{\Lambda,x}$, $x \in \Lambda$.
\end{itemize}
\end{proposition}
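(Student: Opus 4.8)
The plan is to exhibit $\caR^{(\gamma)}_\la$ as a fundamental domain for the lattice $L := \Delta^{(\gamma)}_\la \Z^\la \subset \R^\la$ and to read off all three assertions from this single geometric fact. Since $\Delta^{(\gamma)}_\la$ is nonsingular, $L$ is a full-rank lattice and the quotient torus $\T := \R^\la / L$ carries a flat volume with $\Volume(\T) = \det\Delta^{(\gamma)}_\la$; write $\pi : \R^\la \to \T$ for the (locally volume-preserving) projection. The key link with the dynamics is that, by the stabilization identity \eqref{e:stabilize} applied to $\eta + \delta_x$,
\[ a^{(\gamma)}_{\la,x}\eta = (\eta + \delta_x) - \Delta^{(\gamma)}_\la N^{(\gamma)}_\la(\eta + \delta_x) \equiv \eta + \delta_x \pmod{L}, \]
so that $\pi \circ a^{(\gamma)}_{\la,x} = \tau_x \circ \pi$, where $\tau_x$ is the translation $v \mapsto v + \pi(\delta_x)$ of $\T$. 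Thus, once I know that $\pi$ restricts to a bijection $\caR^{(\gamma)}_\la \to \T$ (injectivity and surjectivity) and that $a^{(\gamma)}_{\la,x}$ maps $\caR^{(\gamma)}_\la$ into itself (closure), all three claims follow formally: (ii) because a bounded measurable set carried bijectively by the volume-preserving $\pi$ onto $\T$ must have $\Volume(\caR^{(\gamma)}_\la) = \Volume(\T) = \det\Delta^{(\gamma)}_\la$; and (i), (iii) because $a^{(\gamma)}_{\la,x}$ is then conjugate via $\pi$ to $\tau_x$, a bijection of $\T$ preserving the flat volume, while $\pi$ transports Lebesgue measure on $\caR^{(\gamma)}_\la$ to that volume.

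It remains to prove the two structural facts. The \emph{injectivity} of $\pi$ on $\caR^{(\gamma)}_\la$ (at most one allowed configuration per coset) I would establish by a maximum-principle argument. Suppose $\eta, \zeta \in \caR^{(\gamma)}_\la$ satisfy $\eta - \zeta = \Delta^{(\gamma)}_\la k$ with $k \in \Z^\la \setminus \{0\}$; after possibly swapping $\eta$ and $\zeta$ (replacing $k$ by $-k$) I may assume $\max_z k_z \ge 1$, and I set $M = \{y : k_y = \max_z k_z\}$. Since $\zeta$ is allowed, $(M, \zeta_M)$ is not an FSC, so there is $y \in M$ with $\zeta_y \ge \deg_M(y)$, where $\deg_M(y) = \#\{z \in M : z \sim y\}$. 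For this $y$, neighbours $z \sim y$ have $k_z = k_y$ when $z \in M$ and $k_z \le k_y - 1$ when $z \in \la \setminus M$, so bounding $(\Delta^{(\gamma)}_\la k)_y = (2d+\gamma)k_y - \sum_{z\sim y,\, z\in\la} k_z$ from below and using $k_y \ge 1$ together with $2d+\gamma \ge \deg_\la(y)$ gives $\eta_y = \zeta_y + (\Delta^{(\gamma)}_\la k)_y \ge \zeta_y + (2d+\gamma) - \deg_M(y) \ge 2d+\gamma$, contradicting $\eta_y < 2d+\gamma$. Hence $k = 0$ and $\eta = \zeta$.

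The \emph{surjectivity} of $\pi$ on $\caR^{(\gamma)}_\la$ (every coset contains an allowed configuration) together with the closure property $a^{(\gamma)}_{\la,x}(\caR^{(\gamma)}_\la) \subseteq \caR^{(\gamma)}_\la$ is where the real work lies. The first reduction is easy: every coset of $L$ meets the positive orthant (which contains balls of arbitrarily large radius, hence a representative of every coset), giving a nonnegative $\zeta$, and then $\caS^{(\gamma)}_\la \zeta \in \Omega^{(\gamma)}_\la$ is a \emph{stable} representative of the same coset by \eqref{e:stabilize}. Upgrading a stable representative to an \emph{allowed} one, and showing that allowedness is preserved by $a^{(\gamma)}_{\la,x}$, I would carry out through the burning algorithm characterization of allowed configurations from \cite[Sections 3,4]{G93}: one checks that if every stable representative of a coset contained a forbidden subconfiguration the burning procedure would fail where it must succeed, and that adding a grain and stabilizing cannot create a forbidden subconfiguration out of none. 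I expect this step — making the burning algorithm interact correctly with the coset structure in the continuous-height setting — to be the main obstacle; in particular it cannot be bypassed via the group action, since the translations $\tau_x$ generate only a countable subgroup of the uncountable torus $\T$ and so never exhaust the cosets. Once surjectivity and closure are in hand, the formal consequences drawn in the first paragraph complete the proof.
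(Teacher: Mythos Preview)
The paper does not prove this proposition; it simply cites \cite[Sections 3,4]{G93}. Your proposal is precisely the fundamental-domain approach carried out in that reference, so in that sense you are on the same track as the (cited) proof.

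Your injectivity argument is correct and complete: the maximum-principle step using the non-FSC condition on $M$ is exactly how uniqueness of the allowed representative is shown. Your reduction of (i)--(iii) to the fundamental-domain property via the conjugacy $\pi \circ a^{(\gamma)}_{\la,x} = \tau_x \circ \pi$ is also clean and correct. The one place where your write-up is more a plan than a proof is, as you acknowledge, closure of $\caR^{(\gamma)}_\la$ under $a^{(\gamma)}_{\la,x}$ and surjectivity of $\pi$. These are indeed the substantive parts; in Gabrielov's argument closure is proved directly by checking that a single legal toppling cannot create an FSC (a short combinatorial verification), and surjectivity is obtained by showing that from any stable configuration enough additions reach an allowed one, or equivalently via the burning test you mention. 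Since you defer exactly these points to \cite{G93}, your proposal is at the same level of completeness as the paper itself.
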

Hence, the probability measure $m^{(\gamma)}_\Lambda$
on ${\mathcal R}^{(\gamma)}_\Lambda$ defined by
\[m^{(\gamma)}_\Lambda(A)
:=\frac{ {\mathrm{Vol}}(A)}{{\mathrm{Vol}}({\mathcal R}^{(\gamma)}_\Lambda)}
\]
is stationary for the Markov process defined in \eqref{gendis} of
Section \ref{ssec:defns}. The set $\mathcal{R}^{(\gamma)}_\Lambda$ is a group
under pointwise addition and $\gamma$-stabilization, isomorphic
to ${\mathbb R}^\Lambda/\Delta^{(\gamma)}_\Lambda$,
the quotient group obtained by identifying
different elements of $\R^\Lambda$ that differ by an integer column multiple
of the matrix $\Delta^{(\gamma)}_\Lambda$.
For $\eta\in\Omega^{(\gamma)}_\Lambda$, we denote by
$n^{(\gamma)}_\Lambda(x,y,\eta)=\left(N_\Lambda^{(\gamma)}(\eta+\delta_x)\right)_y$
the number of $\gamma$-topplings
at $y$ needed to $\gamma$-stabilize $\eta + \delta_x$.
Then due to \eqref{e:stabilize}, we have the relation
\begin{equation}\label{e:addition}
 \left(a^{(\gamma)}_{\Lambda,x}\eta\right)_z
  = \eta_z  + {\bf 1}_{\{x\}}(z)
    - \sum_{y\in\Lambda}
    \Delta^{(\gamma)}_{zy} n^{(\gamma)}_\Lambda(x,y,\eta).
 \end{equation}
Taking expectation with respect to $m^{(\gamma)}_\Lambda$  and using
stationarity under the action of the addition operators 
(that is, Proposition \ref{gabrielov}{\em (iii)})
gives ``Dhar's formula'' \cite{Dhar}:
\begin{equation}
\label{dhar}
  {\mathbb E}_{m^{(\gamma)}_\Lambda} (n^{(\gamma)}_\Lambda(x,y,\eta))
  = (\Delta^{(\gamma)}_\Lambda)^{-1}_{xy}
  =: G^{(\gamma)}_\Lambda (x,y),\qquad z\in\Lambda.
\end{equation}
 If $\gamma > 0$ or $d \ge 3$, the inverse
$G^{(\gamma)}(x,y) := (\Delta^{(\gamma)})^{-1}_{xy}$
also exists and is equal to the limit
$\lim_{\Lambda\uparrow\mathbb{Z}^d} G^{(\gamma)}_\Lambda (x,y)$.
Note that $G^{(\gamma)}(x,y)$ equals the Green's
function of a continuous time random walk that
crosses an edge at rate 1, and is killed at rate
$\gamma$. Similarly, $G^{(\gamma)}_\Lambda(x,y)$
equals the Green's
function of a continuous time random walk that
crosses an edge at rate 1, is killed upon exiting
$\Lambda$ and is killed (inside $\Lambda$) at rate
$\gamma$.

Markov's inequality and \eqref{dhar} imply
\begin{equation}
\label{markest}
  m^{(\gamma)}_\Lambda \left( n^{(\gamma)}_\Lambda(x,y,\eta) \geq 1 \right)
  \leq G^{(\gamma)}_\Lambda (x,y).
\end{equation}
\subsection{Green's function estimates}\label{greenest}
Here we provide estimates
for the Green's function $G^{(\gamma)}(x,y)$. It is well-known
that this Green's function decays
exponentially in the distance to the origin.
We prefer however to insert a proof for the sake
of self-containedness, and to indicate the power of
$\gamma$ entering in the exponent. This will be important later
on when we consider the limit $\gamma\downarrow 0$ at several places.
\begin{lemma}\label{masgreen}
There exist $C > 0$ and $c > 0$ such that for all
$\Lambda\subseteq \mathbb{Z}^d$, $0<\gamma < 1$,
and $x,y\in\mathbb{Z}^d$, $x\not= y$,
\begin{equation}\begin{split}\label{haha}
 G^{(\gamma)}_\Lambda (x,y)
  &\leq G^{(\gamma)} (x,y) \\
  G^{(\gamma)} (x,y)
  &\leq
  \begin{cases}
    \displaystyle{\frac{C \gamma^{d/4 - 1}}{|x-y|^{d/2}}
    e^{ -c \sqrt{\gamma} |x-y| }}
      & \text{if $|x-y| \ge \gamma^{-1/2}$;} \\
   \displaystyle{ \frac{C}{|x-y|^{d-2}} }
      & \text{if $|x-y| \le \gamma^{-1/2}$, $d \ge 3$;} \\
    C+ C \log(|x-y|^{-1} \gamma^{-1/2})
      & \text{if $|x-y| \le \gamma^{-1/2}$, $d = 2$.}
  \end{cases}
  \end{split}\end{equation}
Furthermore, there exist $C'>0$, $c'>0$ such that
the reverse inequalities of \eqref{haha} hold with $C'$ replacing $C$
and $c'$ replacing $c$.
\end{lemma}
\begin{proof}
First note that
$G^{(\gamma)}_\Lambda (x,y) \leq G^{(\gamma)} (x,y)$,
since in defining $G^{(\gamma)}_\Lambda (x,y)$,
the random walk is killed upon exiting $\Lambda$.
Next, we have
\begin{equation}\label{e:Greensum}
 G^{(\gamma)} (x,y)
  = \sum_{n = \|x-y\|_1}^\infty \left(\frac{2d}{2d+\gamma}\right)^n
    p_n(x,y),
\end{equation}
where $p_n(x,y)$ denotes the $n$-step transition probability
of simple (nearest neighbor) random walk $\{ S_n \}$ on $\mathbb{Z}^d$,
and where the sum over $n$ starts at $n = \|x-y\|_1$ since the
nearest neighbor random walk has to make at least
that number of steps to reach $y$ from $x$. Below we write
${\mathbb P}$ for the underlying probability measure.

We use the Gaussian upper and lower bounds:
\begin{equation}\label{e:Gauss1}
 p_n (x,y)
  \leq \frac{C_2}{n^{d/2}} e^{-C_1 |x-y|^2/n},
  \end{equation}
and for $\|x-y\|_1 \leq n $ with $n$ of the same parity
as $\|x-y\|_1$,
\begin{equation}\label{e:Gauss2}
 p_n(x,y) \ge \frac{C_2'}{n^{d/2}} e^{-C_1'|x-y|^2/n}.
 \end{equation}
These are well-known; see \cite{HS-C} for a much more general
result on groups. For the reader's convenience, we supply the sketch of
the proof in the case of $\mathbb{Z}^d$.

The upper bound follows from the large deviation 
bound ${\mathbb P}( |S_m| > |x|/2 ) \le C_5 \exp( - C_1 |x|^2/m)$, 
and the fact that ${\mathbb P} ( S_m = y ) \le C_4 m^{-d/2}$,
a consequence of the local limit theorem \cite{LL10}.
Taking $m = \lfloor n/2 \rfloor$ (where $\lfloor a \rfloor$
denotes the integer part of a real $a$), the two imply:
\begin{equation*}\begin{split}
p_n(0,x)
  &\le \sum_{z : |z| \ge |x|/2}
      p_m(0,z) p_{n-m}(z, x)
      + \sum_{z : |z-x| \ge |x|/2}
      p_m(0,z) p_{n-m}(z,x) \\
  &\le C_2 n^{-d/2} \exp( - C_1 |x|^2/n ).
  \end{split}\end{equation*}
The Gaussian lower bound follows from a chaining argument:
assuming $|x|^2 > n$, let $m = \lfloor |x|^2/n \rfloor$, and
let $y_0 = 0, y_1, \dots, y_m = x$ be points such that
$|y_{i-1} - y_i| \le 2 n / |x|$. Consider the balls
$B_i = B(y_i, n / |x|)$. Then for any $z \in B_{i-1}$,
the central limit theorem implies
${\mathbb P} ( S_{n/m} \in B_i | S_0 = z ) \ge c > 0$. We get,
using the local limit theorem,
\[
p_n(0,x) \ge c^{m-1} C_2' (|x|^2/n^2)^{d/2}
\ge C_2' n^{-d/2} \exp(-C_1' |x|^2/n ).
\]
Inserting the upper estimate \eqref{e:Gauss1} into \eqref{e:Greensum},
and using the notation $C_3 = 1/(2d)$, we have
$2d / (2d + \gamma) \le \exp( - C_3 \gamma )$,
and we obtain
\begin{eqnarray}\label{moervos}
G^{(\gamma)} (0,x)
&\leq &
\sum_{n=1}^\infty \left(\frac{2d}{2d+\gamma}\right)^n
\frac{C_2}{n^{d/2}} e^{-C_1|x|^2/n}
\nonumber\\
&\leq &
  \sum_{n=1}^\infty \frac{C_2}{n^{d/2}}
  \exp \left( - C_3 \gamma n - C_1 \frac{|x|^2}{n} \right).
\end{eqnarray}
In the case $|x| \ge \gamma^{-1/2}$, the bound in \eqref{haha}
follows from estimating separately the sums
\begin{equation}\label{e:twosums}
 \sum_{1 \le n \le |x|/\sqrt{\gamma}}
    \frac{C_2}{n^{d/2}} \exp ( - C_1 |x|^2/n )
    \quad \text{  and  } \,\,
  \sum_{n > |x|/\sqrt{\gamma}}
    \frac{C_2}{n^{d/2}} \exp ( - C_3 \gamma n ).
    \end{equation}
In the case $|x| < \gamma^{-1/2}$, $d = 3$, the bounds
follow by using $2d/(2d + \gamma)\le 1$, and
\cite[Theorem 1.5.4]{Lawler}. In the case $d = 2$
we again estimate the sums \eqref{e:twosums}.

The proof of the lower bound is similar, starting
from the lower bound on $p_n(x,y)$.
\end{proof}

\subsection{Rational $\gamma$}
\label{ssec:rational}
 When $\gamma$ is rational, the abelian avalanche model has a
natural reduction to a discrete dissipative sandpile model that we
now describe. The main results of the paper will not rely on
this section. 
Let $\gamma = k/n$,
with $k \ge 0$, $n \ge 1$ integers, and $k$ and $n$
relatively prime. Then the vector
$(n \eta_x)_{x \in \Lambda}$ changes by integer amounts
both during addition of unit height, and during $\gamma$-toppling.
Hence the fractional part $( \{ n \eta_x \} )_{x \in \Lambda}$
remains invariant, and can be ``factored out''.
We define the map
$ \varphi_\Lambda :
{\mathcal X}_\Lambda \to \{0, 1, 2, \dots \}^\Lambda$,
by $( \varphi_\Lambda(\eta))_x = \lfloor n \eta_x \rfloor$.
The  corresponding  discrete
$\gamma$-toppling matrix  has elements 
\begin{equation*}
 \Delta_{xy}
  = \begin{cases}
    2d n + k &\text{if $x = y$;} \\
    -n       &\text{if $x \sim y$;} \\
    0        &\text{otherwise,}
    \end{cases}
    \end{equation*}
and it has associated addition operators $a_{\Lambda,x}$,
allowed configurations ${\mathcal R}_\Lambda$, and $\gamma$-toppling operators
$T_{\Lambda,x}$. We write $\Delta_\Lambda=(\Delta_{xy})_{x,y\in\Lambda}$
for the $\gamma$-toppling matrix restricted to $\Lambda$.

Notice that $(W, \eta_W)$ is a
$(k/n)$-FSC if and only if $(W,  \varphi_\Lambda(\eta_W))$ is
an FSC with respect to $\Delta$. This implies that
$ \varphi_\Lambda({\mathcal R}^{(k/n)}_\Lambda) = {\mathcal R}_\Lambda$, and
that the stationary measure for the discrete model
coincides with  $ \varphi_\Lambda m^{(k/n)}_\Lambda:=m^{(k/n)}_\Lambda\circ\varphi_{\Lambda}^{-1}$. 
The relation between 
toppling operators is
$T_{\Lambda,x}  \varphi_\Lambda =  \varphi_\Lambda T^{(k/n)}_{\Lambda,x}$.
Consequently, since adding unit height in the
continuous model corresponds to adding $n$ particles
in the discrete model, with the notation
$b_{\Lambda,x} = (a_{\Lambda,x})^n$, we have
$b_{\Lambda,x}  \varphi_\Lambda =  \varphi_\Lambda a^{(k/n)}_{\Lambda,x}$.

The elements $\{ b_{\Lambda,x} \}_{x \in \Lambda}$
generate the sandpile group for $\Delta_\Lambda$.
To see this, note that the order of the group,
$\det(\Delta_\Lambda)$, is relatively prime to $n$.
Hence, powers of $b_{\Lambda,x}$ yield all powers
of $a_{\Lambda,x}$, and the claim follows.
Therefore the reduced (discrete) sandpile model is
ergodic.

\subsection{Discretized heights, burning algorithm and spanning trees}
\label{ssec:trees}
The measure $m^{(\gamma)}_\Lambda$ can be described in terms of discrete height variables.
We introduce the discretizing map
\[
\psi_\Lambda : \Omega^{(\gamma)}_\Lambda \to \Omega^{\mathrm{discr}}_\Lambda:=\{0, 1, \dots, 2d-1, 2d \}^\Lambda
\]
defined by
\begin{equation*}
 \psi_\Lambda(\eta)_y
  = \begin{cases}
    m & \text{if $m \le \eta_y < m + 1$, $m = 0, 1, \dots, 2d-1$;} \\
    2d & \text{if $2d \le \eta_y<\gamma + 2d$.}
    \end{cases}
    \end{equation*}
 We define $\psi : \Omega^{(\gamma)} \to \Omega^{\mathrm{discr}}:=\{0, 1, \dots, 2d-1, 2d \}^{\mathbb{Z}^d}$
analogously.

Notice that the height $2d$ is possible in the discretization of a stable configuration
when $\gamma>0$, whereas when $\gamma=0$ the discretization of a stable configuration has possible heights up to $2d-1$ as in the standard discrete abelian sandpile model.

We define
${\mathcal R}^{\mathrm{discr}}_\Lambda$ to be the set
of configurations $\xi \in \Omega^{\mathrm{discr}}_\Lambda $
which are allowed (cf. Definition \ref{def:allowed}).
By Remark \ref{rem:FSC} and the fact that
the right hand side of \eqref{fscg} is an integer,
for any $\eta \in \Omega^{(\gamma)}_{\Lambda}$, we have
\begin{equation}\label{e:Requiv}
 \eta \in {\mathcal R}^{(\gamma)}_\Lambda
  \qquad \text{if and only if} \qquad
  \psi_\Lambda(\eta) \in {\mathcal R}^{\mathrm{discr}}_\Lambda.
  \end{equation}
By a \emph{$(\gamma,\Lambda)$-cell}, we will mean a subset
of ${\mathcal R}^{(\gamma)}_\Lambda$ of the form
${\mathcal R}^{(\gamma)}_\Lambda \cap \psi_\Lambda^{-1}(\xi)$, for some
$\xi \in {\mathcal R}^{\mathrm{discr}}_\Lambda$.
It follows from the above discussion that
$m^{(\gamma)}_\Lambda$ is uniform on each cell, hence
$m^{(\gamma)}_\Lambda$ can be uniquely specified in terms
of the measures of cells. Let
$\nu^{(\gamma)}_\Lambda := \psi_\Lambda m^{(\gamma)}_\Lambda$.
We proceed to give a description of
$\nu^{(\gamma)}_\Lambda(\xi)$, $\xi \in {\mathcal R}^{\mathrm{discr}}_\Lambda$.
Let
\begin{equation}\label{e:Hxi}
 {\mathcal H}(\xi)
  = |\{ y \in \Lambda : \xi_y = 2d \}|,
  \end{equation}
where $|A|$ denotes the cardinality of a set $A$. Then for $\gamma>0$
\begin{equation}\label{majum}
\nu^{(\gamma)}_\Lambda(\xi)
= \frac{\gamma^{{\mathcal H}(\xi)}}{\det(\Delta^{(\gamma)}_\Lambda)},
\end{equation}
which follows from the fact that under $\psi_\Lambda^{-1}$, discrete heights
$\xi_x\in \{0,\ldots, 2d-1\}$ go to
intervals of unit length, and
heights $\xi_x= 2d$ to
intervals of length $\gamma$, hence
$\mathrm{Vol} (\psi_\Lambda^{-1} (\xi))= \gamma^{{\mathcal H}(\xi)}$.
\begin{remark}
\label{rem:gamma=0}
When $\gamma = 0$, ${\mathcal H}(\xi) = 0$ for all $\xi$, and
$\nu_\Lambda^{(0)}$ is uniform on allowed configurations
 such that all heights are $< 2d$. 
\end{remark}
In order to study the infinite volume limit,
we interpret $\nu^{(\gamma)}_\Lambda(\xi)$ in terms of \textit{weighted
spanning trees}. For this we adapt to our setting the
\textit{burning bijection} of Majumdar \& Dhar that gives a
one-to-one map between allowed
configurations and spanning trees \cite{MD92}.
For more details and examples, see also
\cite{Jarai05, mrs3, redig05}.

\emph{Burning algorithm \cite{Dhar}.} Fix
$\eta \in \Omega^{(\gamma)}_\Lambda$, and let
$\xi = \psi_\Lambda(\eta)$. Set
\begin{equation*}\begin{split}
   U_0
  &= \Lambda, \\
   U_1
  &= \{ y \in \Lambda : \xi_y < 2d \}
  = 
  U_0 \setminus \{ y \in U_0=\Lambda : \xi_y \geq 2d \}.
  \end{split}\end{equation*} 
For $t = 1, 2, \dots$, we recursively define
\begin{equation*}
  U_{t+1}
  = U_t \setminus
    \left\{ y \in U_t : \xi_y \ge
    \sum_{z: z\in U_t,\, z \not= y}
    (-\Delta^{(\gamma)}_{zy}) \right\}.
    \end{equation*}
The sites $y$ removed from
$U_t$ to obtain $U_{t+1}$ 
are called ``burnt'' at time $t+1$,  and we say that
they have \textit{burning time} $t(y)=t+1$. 
In particular at time $1$ the sites in $\Lambda$ with height $\geq 2d$ are burnt.
By induction on $t$ and \eqref{fscg},
no site in $\Lambda \setminus U_t$ 
can be contained in any FSC. Hence we have
$\cap_{t=0}^\infty U_t = \varnothing$ 
if and only if
$\xi \in {\mathcal R}^{\mathrm{discr}}_\Lambda$ (if and only if
$\eta \in {\mathcal R}^{(\gamma)}_\Lambda$ by \eqref{e:Requiv}).

Now consider the graph $\widetilde{\mathbb{Z}^d}$ obtained by
adding a new vertex $\varpi$ to $\mathbb{Z}^d$ and
connecting it to every vertex.
Let us call the newly added edges \emph{dissipative}, and
the rest of the edges \emph{ordinary}.
Now we define a new graph $\widetilde{\Lambda}$, by identifying
all vertices in $\mathbb{Z}^d \setminus \Lambda$ with $\varpi$
(and removing loops). In $\widetilde\Lambda$, every
$y \in \Lambda$ is connected to $\varpi$ by exactly one
dissipative edge. Boundary sites of $\Lambda$ are connected
to $\varpi$ by one or more {\em ordinary} edges, in
such a way that $2d$ ordinary edges emanate from each
$y \in \Lambda$. We denote by $E(\widetilde\Lambda)$
the set of edges of $\widetilde\Lambda$.

We define a \textit{spanning tree} ${\mathcal T}_\Lambda(\xi)$ of $\widetilde\Lambda$.
First, for each
$y \in U_0 \setminus U_1$
(that is, when $\xi_y = 2d$), include the dissipative edge of $y$
in the tree.  We define by convention the burning time of $\varpi$ to be 1, $t(\varpi) = 1$.
We already said that sites
$y \in U_0 \setminus U_1$
also have burning time 1: $t(y) = 1$;
and for each $y \in U_1$,
the burning time $t = t(y) \ge 2$ of $y$
is the index $t$ for which
$y \in U_{t-1} \setminus U_t$. 
For $y \in U_{t-1} \setminus U_t$,
i.e., $t(y)=t$, let
\begin{equation*}\begin{split}
 r(y)
  &= | \{ \{ z, y\} \in E(\widetilde\Lambda) :
  \text{$\{ z, y \}$ ordinary and $t(z) = t-1$} \}|, \\
  n(y)
  &= | \{ \{ z, y \} \in E(\widetilde\Lambda) :
  \text{$\{ z, y \}$ ordinary and $t(z) < t$} \}|.
  \end{split}\end{equation*}
 In words, $n(y)$, resp.\ $r(y)$, is the number of neighbors of $y$
excluding $\varpi$ that are burnt before, resp. just before,
$y$ is burnt. 
{}From the construction,  for all $\eta$ such that
$U_1, \dots, U_{t-1}$
take some fixed values, we have the equivalence 
\begin{equation*}\begin{split}
 r(y) = r,\, n(y) = n
  &\qquad \text{if and only if} \qquad
  2d - n \le \eta_y < 2d - n + r \\
  &\qquad \text{if and only if} \qquad
  2d - n \le \xi_y < 2d - n + r.
  \end{split}\end{equation*}
A one-to-one correspondence can be set up between
the $2d$ directions of the ordinary edges and the values
$\{0, 1, \dots, 2d-1\}$. This induces a one-to-one
correspondence between the $r(y)$ ordinary edges and
the values $\{ 2d - n, \dots, 2d - n + r - 1 \}$.
Include in ${\mathcal T}_\Lambda(\xi)$
the ordinary edge $\{ z, y \}$ corresponding to the value
of $\xi_y$. Since each vertex in 
$U_t$ 
is connected to a unique vertex in 
$U_{t-1}$, 
${\mathcal T}_\Lambda(\xi)$ is a spanning tree.
It follows by construction that the mapping
\[
 {\mathcal T}_\Lambda:\xi \mapsto {\mathcal T}_\Lambda(\xi)
\]
is one-to-one and onto the set of spanning trees of $\widetilde\Lambda$.

Let $\mu^{(\gamma)}_\Lambda$ denote the distribution on
$\{ 0, 1 \}^{E(\widetilde\Lambda)}$ under which a spanning tree
$\widetilde t$ has weight
$\gamma^{\widetilde N(\widetilde t)} / \det(\Delta^{(\gamma)}_\Lambda)$,
where $\widetilde N(\widetilde t)$ denotes the number of
dissipative edges in $\widetilde t$.
By construction, for each $\xi$,
$\widetilde N({\mathcal T}_\Lambda (\xi)) = {\mathcal H}(\xi)$ (see \eqref{e:Hxi}),
and therefore, by \eqref{majum},
\[
\mu^{(\gamma)}_\Lambda ({\mathcal T}_\Lambda(\xi))
=\frac{\gamma^{{\mathcal H}(\xi)}}{\det(\Delta^{(\gamma)}_\Lambda)}
=\nu^{(\gamma)}_\Lambda(\xi)= m^{(\gamma)}_\Lambda(\psi_\Lambda^{-1}(\xi)).
\]
\subsection{Waves and spanning trees}
\label{ssec:waves}
We will need an extension of the results of Section \ref{ssec:trees}
that allows us to represent
waves in a $\gamma$-avalanche by spanning trees,
in an analogous way to what happens for waves in an avalanche in the abelian sandpile model
(see \cite{IKP94,IP98,jr}).
Let $\eta \in \Omega^{(\gamma)}_\Lambda$, and suppose we add unit height
at a site, which we assume without loss of generality to be
$0 \in \Lambda$. The $\gamma$-waves created by this addition are defined as
follows. If $\eta_0 + 1 < 2d + \gamma$,
there is no $\gamma$-avalanche and there are no $\gamma$-waves. Assuming
$\eta_0 + 1 \ge 2d + \gamma$,
$\gamma$-topple all sites that can be $\gamma$-toppled, not allowing $0$ to
topple more than once. Then all sites will
topple at most once, and the set of sites that topple,
call it $W^{(1)}_\Lambda(\eta)$, is the first $\gamma$-wave. If after the
first $\gamma$-wave the height at $0$ is still at least $2d + \gamma$,
we start a second $\gamma$-wave, $W^{(2)}_\Lambda$ and so on. We set
$W^{(i)}_\Lambda = \varnothing$, if the $i$-th wave does not exist.
Note that after each $\gamma$-wave, the height at $0$ has decreased by
$\gamma$, and hence the number
of $\gamma$-waves is, deterministically, bounded by
$\lceil \gamma^{-1} \rceil$, where $\lceil a \rceil$ denotes the smallest
integer larger than or equal to a real $a$.

We will represent the intermediate configurations between $\gamma$-waves
as recurrent configurations on an auxiliary space, and
show that they arise by applying the addition operator
at $0$ on this auxiliary space.
Let  for $x,y\in\mathbb{Z}^d$, 
\begin{equation}\label{bobob}\begin{split}
 \widehat{\Omega}^{(\gamma)}_\Lambda
  &= [0, 2d + \gamma + 1) \times
     [0, 2d + \gamma)^{\Lambda \setminus \{ 0 \};} \\
  \widehat{\Delta}^{(\gamma)}_{xy}
  &= \begin{cases}
    2d + \gamma + 1 & \text{if $x=y=0$;} \\
    \Delta^{(\gamma)}_{xy} & \text{otherwise,}
    \end{cases}
    \end{split}\end{equation}
and call
$\widehat{\mathcal{R}}^{(\gamma)}_\Lambda\subset\widehat{\Omega}^{(\gamma)}_\Lambda$
the set of recurrent configurations
for the  toppling matrix  $\widehat{\Delta}^{(\gamma)}_\Lambda$   (which is
the above matrix restricted to $\Lambda$),  and
$\widehat{a}_x$ the corresponding addition operators 
(this amounts to a dissipation $\gamma+1$ at site 0 and $\gamma$ elsewhere).
By Remark \ref{rem:FSC} (which is valid also for
non-homogeneous dissipation, as it is the case here), we have
$\mathcal{R}^{(\gamma)}_\Lambda \subseteq \widehat{\mathcal{R}}^{(\gamma)}_\Lambda$.
Let us show that there is a one-to-one mapping between
$\widehat{\mathcal{R}}^{(\gamma)}_\Lambda \setminus \mathcal{R}^{(\gamma)}_\Lambda$
and intermediate configurations between $\gamma$-waves at $0$.

Let $\eta \in \Omega^{(\gamma)}_\Lambda$. 
Note that if $\eta + \delta_0 \in \mathcal{R}^{(\gamma)}_\Lambda$, then
there are no $\gamma$-waves, and
$\eta + \delta_0 = a_0 \eta = \widehat{a}_0 \eta$.
If $(\eta + \delta_0)_0 = \eta_0 + 1 \ge 2d + \gamma$,
then a $\gamma$-avalanche starts. Put
$\eta^{(1)} :=
\eta + \delta_0 \in \widehat{\mathcal{R}}^{(\gamma)} _\Lambda \setminus \mathcal{R}^{(\gamma)}_\Lambda$.
This is the intermediate configuration before the first $\gamma$-wave, and
we have $\eta^{(1)} = \widehat{a}_0 \eta$.
Carrying out the first $\gamma$-wave in the original model is the same
as applying $\widehat{a}_0$ to $\eta^{(1)}$ in the modified model.
Let $\eta^{(2)} := \widehat{a}_0 \eta^{(1)}$.
If $(\eta^{(2)})_0 < 2d + \gamma$, then there is only one $\gamma$-wave, and
$\eta^{(2)} = a_0 \eta \in \mathcal{R}^{(\gamma)}_\Lambda$ is the configuration
after the $\gamma$-avalanche.
If $(\eta^{(2)})_0 \ge 2d + \gamma$, then $\eta^{(2)}$ is the intermediate
configuration before the second $\gamma$-wave. Performing the
second $\gamma$-wave in the original model amounts to applying
$\widehat{a}_0$ to $\eta^{(2)}$ in the modified model.
The result of the second $\gamma$-wave is $\eta^{(3)} := \widehat{a}_0 \eta^{(2)}$.
We continue inductively until we reach the smallest $K \ge 2$,
such that $\eta^{(K)} \in \mathcal{R}^{(\gamma)}_\Lambda$. This happens
precisely if there were $K-1$ $\gamma$-waves, and then
$\eta^{(K)} = a_0 \eta = \widehat{a}_0^K \eta$. Using invertibility of
$\widehat{a}_0$ on $\widehat{\mathcal{R}}^{(\gamma)}_\Lambda$,
it follows that the intermediate configurations
$\eta^{(1)}, \dots, \eta^{(K-1)}$ are all distinct, and also that
distinct $\eta$'s have distinct intermediate configurations.

We now show that any
$\zeta \in \widehat{\mathcal{R}}^{(\gamma)}_\Lambda \setminus \mathcal{R}^{(\gamma)}_\Lambda$
arises as an intermediate configuration. We first claim that
there exist $n_x \ge 0$ such that
$\prod_x \widehat{a}_x^{n_x} \zeta \in \mathcal{R}^{(\gamma)}_\Lambda$.
To see this, let  $\zeta' := \zeta + \sum_{x \in \Lambda} m_x \delta_x$,  where
$m_x \ge 0$, and $\zeta'_x \ge 2d + \gamma$, $x \in \Lambda$.
The latter condition ensures that
$\mathcal{S}^{(\gamma)}_\Lambda (\zeta') =: \zeta'' \in \mathcal{R}^{(\gamma)}_\Lambda$.
Let $m$ be the number of times $0$ $\gamma$-topples during this stabilization of $\zeta'$.
Then in the modified dynamics, we have
$\widehat{a}_0^m \prod_{x \in \Lambda} \widehat{a}_x^{m_x} \zeta = \zeta''$,
as claimed.
Now define $\zeta''' \in \mathcal{R}^{(\gamma)}_\Lambda$ by the equality
$\zeta'' = \prod_{x \in \Lambda} a_x^{n_x} \zeta'''$. Let $n$ be the number of times
$0$ $\gamma$-topples in computing $\prod_{x \in \Lambda} a_x^{n_x} \zeta'''$. Then we have,
in the modified dynamics,
\begin{equation*}
 \prod_{x \in \Lambda} \widehat{a}_x^{n_x} \zeta
  = \zeta''
  = \widehat{a}_0^n \prod_{x \in \Lambda} \widehat{a}_x^{n_x} \zeta'''.
  \end{equation*}
The above implies $\zeta = \widehat{a}_0^n \zeta'''$. Since
$\zeta''' \in \mathcal{R}^{(\gamma)}_\Lambda$, this proves that
$\zeta$ is an intermediate configuration.

We now extend the spanning tree representation to
$\widehat{\mathcal{R}}^{(\gamma)}_\Lambda$. Put
\begin{equation*}
 \widehat{\Omega}^{\mathrm{discr}}_\Lambda
  = \{ 0, 1, \dots, 2d - 1, 2d, * \} \times
    \{ 0, 1, \dots, 2d - 1, 2d \}^{\Lambda \setminus \{0\}},
    \end{equation*}
and modify $\psi_\Lambda$ by setting
$\widehat{\psi}_\Lambda (\eta)_0 = *$, if
$2d + \gamma \le \eta_0 < 2d + \gamma + 1$.
We define the graph $\widehat{\Lambda}$ by adding an
\emph{extra} edge between $0$ and $\varpi$ in $\widetilde{\Lambda}$.

We use a particular burning rule when applying the
burning bijection to $\widehat{\mathcal{R}}^{(\gamma)}_\Lambda$.
We first send one unit of height along the extra edge
from $\varpi$ to $0$. For configurations in
$\mathcal{R}^{(\gamma)}_\Lambda$, nothing burns. For a configuration
$\eta \in \widehat{\mathcal{R}}^{(\gamma)}_\Lambda \setminus \mathcal{R}^{(\gamma)}_\Lambda$,
a set of sites $W(\eta)$ burns.
This is precisely the $\gamma$-wave in the intermediate
configuration $\eta$. Following this, we send
$\gamma$ units of height along the dissipative edges from $\varpi$,
continue burning, and then send $1$ unit of height along
ordinary edges from $\varpi$, and finish burning.
 Then under the
burning bijection  the spanning trees
containing the extra edge are precisely
the ones representing intermediate configurations, and
the component of $0$ in the forest obtained by removing
the extra edge is the corresponding $\gamma$-wave.

\subsection{Ergodicity  of the finite-volume dynamics}
\label{ssec:ergod}
In this section we show ergodicity of the Markov chain with generator
\eqref{gendis} when $\gamma$ is irrational and consider ergodicity
of single addition operators.
 Because we work with continuous heights, even in finite
volume the continuous-time Markov chain has infinite state space,
and ergodicity is a non-trivial issue.
 The results of Sections \ref{sec:stat-infinite}--\ref{sec:zero-dissip-lim} will not rely on
this section, and therefore the reader can skip this somewhat independent section on first reading. 
\begin{proposition}
\label{prop:ergod}
\begin{itemize}
\item[(a)] For $\gamma$ irrational, the continuous-time Markov chain
with generator \eqref{gendis} is ergodic.
\item[(b)] For $y\in\Lambda$, the addition operator $ a^{(\gamma)}_{\Lambda, y}$
is an ergodic transformation on the measure space
$(\mathcal{R}^{(\gamma)}_\Lambda,m^{(\gamma)}_\Lambda)$
if and only if  $\{ G^{(\gamma)} (x,y): x\in \Lambda\} \cup \{1\}$
is rationally independent.
\end{itemize}
\end{proposition}
\begin{proof}
For  \textit{(a)} we have to show that
$\mathcal{L}_\Lambda f = 0$, $f \in L^2 (m^{(\gamma)}_\Lambda)$
implies $f$ is constant, $m^{(\gamma)}_\Lambda$-a.s.
The set $\mathcal{R}^{(\gamma)}_\Lambda$ is a group under pointwise addition
and $\gamma$-stabilization, isomorphic
to ${\mathbb R}^\Lambda/\Delta^{(\gamma)}_\Lambda$, see \cite{G93}.
Analogously to the discrete case, the characters of this group are indexed
by $m\in \mathbb Z^\Lambda$ via
\begin{equation}\label{carac}
  \chi_m (\eta)
  = \exp \left( 2\pi i\sum_{x,y\in \Lambda}m_x G^{(\gamma)} (x,y) \eta_y\right).
\end{equation}
These characters form a complete orthogonal family in $L^2(m^{(\gamma)}_\Lambda)$.
We have the identity
\begin{equation}\label{stapje}
 \chi_m (a^{(\gamma)}_{\Lambda,z}\eta)
 = \alpha_m (z)\chi_m (\eta),
\end{equation}
where
\begin{equation}\label{theformula}
 \alpha_m (z)
 = \exp\left(2\pi i\sum_{x\in\Lambda} m_x G^{(\gamma)}(x,z)\right).
\end{equation}
The generator applied to $\chi_m$ then gives
\[
 {\mathcal{L}}_\Lambda \chi_m
 = \left(\sum_{x\in\Lambda} {\varphi} (x)\left(\alpha_m(x)-1\right) \right)\chi_m.
\]
So we have to prove that if
\begin{equation}\label{tussenstap}
 \sum_{x\in\Lambda}{\varphi}(x)(\alpha_m(x)-1) =0,
\end{equation}
then $m=0$. Since for all $x\in \Lambda$, $\alpha_m(x)$ is a
complex number of modulus one, and ${\varphi}(x)>0$,
\eqref{tussenstap} implies that $\alpha_m (x)=1$. Hence for all $z\in\Lambda$
\[
 \sum_{x\in\Lambda} m_x G^{(\gamma)}(x,z) = k_z
\]
where $k_z\in \mathbb Z$.
In vector notation this reads  $mG^{(\gamma)}= k$ and gives
$m= k\Delta^{(\gamma)}$, which implies
that for all $x\in \Lambda$,
$k_x (2d+\gamma)$ is an integer. By the irrationality of $\gamma$ this
implies $k_x=0 $, and hence $m= k\Delta^{(\gamma)}=0$.

To prove \textit{(b)}, notice that the ergodicity of $a^{(\gamma)}_{\Lambda, y}$
is equivalent with the statement
that $\chi_m \circ a^{(\gamma)}_{\Lambda, y}= \chi_m$ if and only if
$m=0$. By \eqref{stapje} this is the same as $\alpha_m (y) =1$
if and only if $m=0$.
Now using formula \eqref{theformula} for $\alpha_m$ we have to prove
that
\[
 \sum_{x\in\Lambda} m_x G^{(\gamma)}(x,y)= k
\]
with $k\in \mathbb Z$ implies $m=0$. This is exactly the
condition of linear independence stated.
\end{proof}

The following Proposition gives a more explicit
sufficient condition for ergodicity of an addition
operator. For $x,y \in \Lambda$, let
\begin{equation*}
 P_\Lambda(x,y)
  = \begin{cases}
    \displaystyle{\frac{1}{2d}} & \text{if $x \sim y$;} \\
    0            & \text{otherwise.}
    \end{cases}
    \end{equation*}
Let $\lambda_1 \ge \lambda_2 \ge \dots \ge \lambda_{|\Lambda|}$
be the eigenvalues of $P_\Lambda$, and let
$w_k(x)$, $1 \le k \le |\Lambda|$,
be the corresponding eigenfunctions normalized to have
$\ell_2$-norm $1$.
\begin{proposition}
\label{prop:ergodrect}
\begin{itemize}
\item[(a)] Suppose that $0 \in \Lambda$, the
eigenvalues $\lambda_k$ are all distinct,
and $w_k(0) \not= 0$ for $1 \le k \le |\Lambda|$.
Suppose that $\gamma$ is transcendental.
Then $a^{(\gamma)}_{\Lambda,0}$ is ergodic.
\item[(b)] Examples where the conditions in (a)
are satisfied are given by:
$\Lambda = [-a_1+1,b_1-1] \times \dots \times [-a_d+1,b_d-1]$,
where $p_i = a_i + b_i$ are distinct primes greater than $5$,
$i = 1, \dots, d$.
\end{itemize}
\end{proposition}
\begin{proof}
\textit{(a)} For each $x$,
$\beta_x(\gamma) = G^{(\gamma)}(0,x) = (\Delta^{(\gamma)}_\Lambda )^{-1}_{0x}$
is a ratio of integer polynomials of $\gamma$. We show that under the
assumptions of part \textit{(a)}, the functions
$\{\beta_x : x \in \Lambda \} \cup \{ 1 \}$
are linearly independent over the rationals. This implies that
if $\gamma$ is transcendental, then no rational linear
combination of the numbers $\{ \beta_x (\gamma) : x \in \Lambda \}$
can take a rational value, and hence Proposition \ref{prop:ergod}\textit{(b)}
can be applied.

We have the following spectral representation:
\begin{equation}\begin{split}\label{e:spectral}
 \beta_x(\gamma)
  &= G^{(\gamma)}_\Lambda(0,x)
  = \left[ (2d+\gamma) I - 2d P_\Lambda \right]^{-1} (0,x) \\
  &= \sum_{k=1}^{|\Lambda|} \frac{1}{2d + \gamma - 2d \lambda_k}
    w_k(0) w_k(x),
    \end{split}\end{equation}
     (where $I$ denotes the identity matrix). 
Suppose that $(\alpha(x))_{x \in \Lambda}$ is a system of rationals
such that $\sum_{x \in \Lambda} \alpha(x) \beta_x(\gamma) \equiv c$,
with $c$ a rational constant. Since $\beta_x(\gamma) \to 0$ as
$\gamma \to \infty$, we necessarily have $c = 0$. Hence
\eqref{e:spectral} implies
\begin{equation}\label{e:vanish}
 \sum_{k=1}^{|\Lambda|} \frac{c_k}{\gamma - 2d(\lambda_k - 1)}
  \equiv 0,
  \end{equation}
where $c_k = w_k(0) \sum_{x \in \Lambda} \alpha(x) w_k(x)$.
Since the $\lambda_k$ are all distinct, the sum in
\eqref{e:vanish} can only vanish identically, if
$c_k = 0$ for $1 \le k \le |\Lambda|$. Since $w_k(0) \not= 0$,
this implies that $\sum_{x \in \Lambda} \alpha(x) w_k(x) = 0$
for $1 \le k \le |\Lambda|$. Since the collection of functions
$\{ k \mapsto w_k(x) : x \in \Lambda \}$ forms
a basis, it follows that $\alpha(x) = 0$ for each $x \in \Lambda$.
This proves the stated linear independence, and hence
completes the proof of part \textit{(a)}.

\textit{(b)} For the rectangle $\Lambda$, the eigenfunctions and
eigenvalues can be indexed by
${\bf k} = (k_1, \dots, k_d) \in
\prod_{i=1}^d \{1, \dots, p_i-1 \} =: {\mathcal K}$
and they are given by \cite[Section 8.2]{LL10}:
\begin{equation*}\begin{split}
 \lambda_{\bf k}
  &= \frac1d \sum_{i=1}^d
    \cos \left( \frac{k_i \pi}{p_i} \right); \\
  w_{\bf k} (x)
  &= c_{\bf k} \prod_{i=1}^d
     \sin \left( \frac{(x_i+a_i) k_i \pi}{p_i} \right),
     \end{split}\end{equation*}
where $c_{\bf k}$ is a constant normalizing $w_{\bf k}$ to
have $\ell_2$-norm $1$. Since $p_i$ is prime,
we have $w_{\bf k}(0) \not= 0$ for
${\bf k} \in {\mathcal K}$.

We conclude the proof by showing that when the $p_i$
are as assumed, the eigenvalues are all distinct.
Let $\eta_j = \exp( 2 \pi i k_j / 2 p_j )$,
$\zeta_j = \exp( 2 \pi i l_j / 2 p_j )$, so that
with ${\bf k} = (k_1, \dots, k_d)$ and
${\bf l} = (l_1, \dots, l_d)$ we have
\begin{equation*}
 2 d \lambda_{\bf k} - 2 d \lambda_{\bf l}
  = \sum_{j=1}^d (\eta_j + \eta_j^{-1})
    - \sum_{j=1}^d (\zeta_j + \zeta_j^{-1}).
    \end{equation*}
We prove by induction on $d$ that
$2d (\lambda_{\bf k} - \lambda_{\bf l}) = 0$
implies ${\bf k} = {\bf l}$.

When $d = 1$, we have $\lambda_k = \cos( k \pi / p_1 )$,
$k = 1, \dots, p_1 - 1$, which are all distinct, so the
statement holds for $d = 1$.

For the induction step, we will use some basic results
from field theory, which can be found for example
in \cite[Chapter 8]{Lang05}.
Assume $d \ge 2$, and that
$2d ( \lambda_{\bf k} - \lambda_{\bf l} ) = 0$.
For $m \ge 1$, write $\omega_m = \exp( 2 \pi i / m )$.
Let ${\mathbb Q}(\omega_{m_1}, \dots, \omega_{m_r})$
denote the field obtained by
adjoining $\omega_{m_1}, \dots, \omega_{m_r}$ to the rationals.
The $m$-th cyclotomic
polynomial has roots precisely the primitive
$m$-th roots of unity, is irreducible over ${\mathbb Q}$,
and has degree $ \varphi(m)$, where $ \varphi(m)$ is the
number of residue classes mod $m$
relatively prime to $m$. Hence, since the $p_i$ are
distinct, the degree
of $K := {\mathbb Q}(\omega_{p_1}, \dots, \omega_{p_d})
= {\mathbb Q}(\omega_{p_1 \cdots p_d})$
over ${\mathbb Q}$ is $ \varphi(p_1 \cdots p_d) = (p_1 - 1) \cdots (p_d - 1)$
\cite[Theorem VII.1.3]{Lang05}.
In particular, $\omega_{p_d}$, and in fact all primitive
$p_d$-th roots of unity, are not contained in
$K' := {\mathbb Q}(\omega_{p_1}, \dots, \omega_{p_{d-1}})$, and
they have degree $p_d-1$ over $K'$.
We distinguish three cases.

\emph{Case 1. The numbers $k_d$ and $l_d$ are both even.}
Then $\eta_d = \omega_{p_d}^a$, $\zeta_d = \omega_{p_d}^b$
with $a = k_d/2$, $b = l_d/2$, $1 \le a, b \le (p_d-1)/2$.
Let $\omega$ be a
primitive $p_d$-th root of unity such that
$\omega^{-1} \not= \omega_{p_d}^c$ for $c = a, b, -a, -b$.
Since $p_d > 5$, this is possible.
Since the degree of $\omega$ over $K'$ is $p_d-1$,
the numbers $1, \omega, \omega^2, \dots, \omega^{p_d-2}$
are linearly independent over $K'$. Moreover, they
contain $\eta_d, \eta_d^{-1}, \zeta_d, \zeta_d^{-1}$.
Since $\sum_{j=1}^{d-1} (\eta_j + \eta_j^{-1} - \zeta_j - \zeta_j^{-1}) \in K'$
we need to have $\eta_d = \zeta_d$, and hence
$k_d = l_d$. It follows that
$\sum_{j=1}^{d-1} (\eta_j + \eta_j^{-1} - \zeta_j - \zeta_j^{-1}) = 0$,
and the proof is completed using the induction hypothesis.

\emph{Case 2. The numbers $k_d$ and $l_d$ are both odd.}
Then $\eta_d = - \omega_{p_d}^a$, $\zeta_d = - \omega_{p_d}^b$
with $(p_d+1)/2 \le a, b \le p_d-1$.
Let now $\omega$ be a primitive $p_d$-th root
of unity such that $\omega^{-1} \not= \omega_{p_d}^c$
for $c = a, b, -a, -b$. The argument is then completed
as in Case 1.

\emph{Case 3. The numbers $k_d$ and $l_d$ are of different parity.}
Without loss of generality, assume that $k_d$ is even
and $l_d$ is odd, so that
$\eta_d = \omega_{p_d}^a$, $1 \le a \le (p_d-1)/2$, and
$\zeta_d = - \omega_{p_d}^b$, $(p_d+1)/2 \le b \le p_d-1$.
Again we can find $\omega$ such that
$\omega^{-1} \not= \omega_{p_d}^c$ for $c = a, b, -a, -b$.
Then linear independence implies that this
case is impossible.

The above completes the argument that the eigenvalues
are all distinct, and hence finishes the proof of part \textit{(b)}.
\end{proof}

\section{The infinite-volume limit of the stationary measure of the
abelian avalanche model}
\label{sec:stat-infinite}
\subsection{Existence of the infinite-volume limit of $m^{(\gamma)}_{\Lambda}$}
Recall the definition of $\widetilde\Lambda$
from Section \ref{ssec:trees}.
We can view $\widetilde\Lambda$ as a weighted network, where
ordinary edges have weight $1$, and dissipative edges
have weight $\gamma$. Then $\mu^{(\gamma)}_\Lambda$ is
the weighted spanning tree measure on this network,
that is, the measure where the probability of a tree is
proportional to the product of the weights of the
edges it contains. Wilson's algorithm \cite{Wilson}
can be used to sample from this distribution,
analogously to what is done for the abelian sandpile model, see \cite{aj,jr}.
This is
described as follows. For a path $\sigma$, we denote
by $LE(\sigma)$ its loop-erasure, that is the path
obtained by removing loops from $\sigma$
chronologically. Consider the network random walk
on $\widetilde\Lambda$, that is, the reversible Markov chain
that makes jumps with probabilities proportional to
the weights. Let ${\mathcal F}_0 = \{ \varpi \}$, and let
$x_1, \dots, x_K$ be an enumeration of the vertices
in $\Lambda$. If ${\mathcal F}_{j-1}$ has been defined, start a
network random walk from $x_j$, and run it until it
first hits ${\mathcal F}_{j-1}$. Let $\sigma_j$ be the path
obtained, and let
${\mathcal F}_j = {\mathcal F}_{j-1} \cup LE(\sigma_j)$. By Wilson's
theorem, ${\mathcal F}_K$ is a tree with the stated
distribution.

  Let $\gamma \ge 0$. Using monotonicity arguments, in the spirit of
\cite[Section 5]{benj}, we obtain that for any exhaustion
$\Lambda_1 \subseteq \Lambda_2 \subseteq \dots$ of $\mathbb{Z}^d$, the weak
limit
\begin{equation}\label{muwilson}
\lim_{n \to \infty} \mu^{(\gamma)}_{\Lambda_n}
  =: \mu^{(\gamma)}
\end{equation}
exists.
We will be interested in sampling from
$\mu^{(\gamma)}$ when $\gamma > 0$. For this,
Wilson's algorithm can be used.
When $\gamma = 0$, $d \ge 3$, this is the
statement of \cite[Theorem 5.1]{benj}. The algorithm in
this case uses simple random walk, and the ``root'' is
at infinity. When $\gamma > 0$, the algorithm uses the
network random walk on $\widetilde{\mathbb{Z}^d}$, stopped when it hits
$\varpi$. The tree generated by the algorithm
then gives a sample from $\mu^{(\gamma)}$.
To see the convergence \eqref{muwilson},
couple the algorithms in $\widetilde\Lambda_n$ and
$\widetilde{\mathbb{Z}^d}$, starting walks at a fixed finite
number of sites $x_1, \dots, x_k$. The network random walks
can be coupled until the first time the boundary of $\Lambda_n$
is hit. The coupling shows that the joint distribution
of $(LE(\sigma_1), \dots, LE(\sigma_k))$ in $\widetilde\Lambda_n$
converges as $n \to \infty$ to the joint distribution
in $\widetilde{\mathbb{Z}^d}$. Since this information determines all
finite dimensional distributions of the spanning tree,
the claim \eqref{muwilson} is proven.

Under $\mu^{(\gamma)}$, there is a unique path $\pi_x$
in the tree from $x$ to $\varpi$. Similarly, there is a
unique path $\pi_{\Lambda,x}$ under $\mu^{(\gamma)}_\Lambda$.
Let us write ${\mathcal P}_x$ for the set of self-avoiding paths
from $x$ to $\varpi$ in $\widetilde{\mathbb{Z}^d}$, and
${\mathcal N}_x = \{ y : |y -x| \le 1\}$.
{}From the
correspondence ${\mathcal T}_\Lambda$ we obtain (as in \cite{aj,jr})
\begin{lemma}
\label{lem:height}
Under the map ${\mathcal T}_\Lambda^{-1}$, the height $\xi_x$ is a function
of $\{ \pi_{\Lambda,y} \}_{y \in {\mathcal N}_x}$ only.
\end{lemma}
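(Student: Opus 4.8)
The plan is to invert the Majumdar--Dhar correspondence locally: I read off $\xi_x$ from the single edge that the tree assigns to $x$, together with the burning-time data of $x$ and of its neighbours. Recall that in the construction of $T_\la(\xi)$ the edge included at a site $y$ is exactly the first edge of $\pi_{\la,y}$ (the unique edge from $y$ towards the root $\omega$), and that this edge is special precisely when $\xi_y = 2d$. Hence from the first edge of $\pi_{\la,x}$ alone I already recover $\xi_x$ when $\xi_x = 2d$, and otherwise I learn the direction of the ordinary edge chosen at $x$. What remains is to turn this direction into the exact value $\xi_x \in \{0, 1, \dots, 2d-1\}$, which by the displayed equivalence amounts to determining the two counts $n(x)$ and $r(x)$.

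First I would show that the burning time $t(y)$ is itself a function of the single path $\pi_{\la,y}$. Along $\pi_{\la,y}$ every ordinary edge joins a site of burning time $t$ to its parent of burning time $t-1$, whereas a special edge can occur only as the final edge of the path (it joins a site with $\xi = 2d$, hence of burning time $1$, to $\omega$). Tracing the path from $\omega$ back to $y$ then yields $t(y) = \operatorname{length}(\pi_{\la,y})$ when the last edge is special, and $t(y) = \operatorname{length}(\pi_{\la,y}) + 1$ when the last edge is the ordinary edge into $\omega$; in either case $t(y)$ is determined by $\pi_{\la,y}$.

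With this in hand, for each ordinary neighbour $z$ of $x$ I obtain $t(z)$ from $\pi_{\la,z}$, using the convention $t(\omega) = 1$ for those ordinary edges of $\widetilde\la$ that leave $\la$ (this, like the edge set of $\widetilde\la$ and the bijection between directions and values, is fixed data). Since
\[
 n(x) = |\{ z : \{z,x\}\ \text{ordinary},\ t(z) < t(x) \}|, \qquad
 r(x) = |\{ z : \{z,x\}\ \text{ordinary},\ t(z) = t(x)-1 \}|,
\]
both counts, and the identity of the $r(x)$ candidate edges, depend only on $\{ t(y) \}_{y \in \caN_x}$, hence only on $\{ \pi_{\la,y} \}_{y \in \caN_x}$. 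Feeding $n(x)$, $r(x)$ and the fixed direction--value bijection into the induced correspondence, the rank of the chosen edge (the first edge of $\pi_{\la,x}$) among the $r(x)$ ordinary edges to neighbours of burning time $t(x)-1$ yields $\xi_x = (2d - n(x)) + \operatorname{rank}$. This displays $\xi_x$ as a function of $\{ \pi_{\la,y} \}_{y \in \caN_x}$ only.

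The hardest part will be the burning-time identity of the second paragraph, together with the boundary bookkeeping it entails: one must verify that a special edge never occurs in the interior of a tree path (so that the two cases for the final edge are exhaustive and mutually exclusive and $t(y)$ is unambiguously recovered), and handle those neighbours in $\caN_x$ that lie outside $\la$ by assigning them $t = 1$ rather than a path.
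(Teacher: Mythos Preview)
Your argument is correct and fills in precisely the details the paper omits: the lemma is stated there without proof, merely as a consequence of the Majumdar--Dhar construction (``From the correspondence $T_\la$ we obtain:''). Your key observation---that the burning time $t(y)$ is read off from the length of $\pi_{\la,y}$ together with whether its final edge is special or ordinary---is exactly the mechanism that makes the inverse map local, and your verification that a special edge can only occur as the last edge of a tree path (since its other endpoint is the root $\omega$) is the right justification. The recovery of $n(x)$, $r(x)$, the set of candidate edges, and hence $\xi_x$ from $\{t(y)\}_{y\in\caN_x}$ and the first edge of $\pi_{\la,x}$ then follows directly from the displayed equivalence in the construction, as you say.
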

\begin{lemma}
\label{lem:limit}
For any $\gamma > 0$, and any exhaustion
$\Lambda_1 \subseteq \Lambda_2 \subseteq \dots$ of $\mathbb{Z}^d$, we have
the unique weak limit
\begin{equation*}\begin{split}
{ \lim_{n \to \infty} \nu^{(\gamma)}_{\Lambda_n}
  =: \nu^{(\gamma)}. }
  \end{split}\end{equation*}
Moreover, for all $x\in \mathbb{Z}^d$, there is a map
$h_x : \prod_{y \in {\mathcal N}_x} {\mathcal P}_y \to \{0,1, \dots, 2d\}$,
such that
$\{ \xi_x \}_{x \in \mathbb{Z}^d}$
under $\nu^{(\gamma)}$ has the same law as
$\{ h_x(\pi_y : y \in {\mathcal N}_x) \}_{x \in \mathbb{Z}^d}$ under
$\mu^{(\gamma)}$.
\end{lemma}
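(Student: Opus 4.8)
The plan is to transfer the convergence of the weighted spanning-tree measures recorded in \eqref{muwilson} to the discrete-height measures $\nu^{(\gamma)}_{\la_n}$ through the local recovery function furnished by Lemma \ref{lem:height}. Recall that $T_\la$ carries $\nu^{(\gamma)}_\la=\psi_\la m^{(\gamma)}_\la$ onto $\mu^{(\gamma)}_\la$, and that $\mu^{(\gamma)}_{\la_n}\to\mu^{(\gamma)}$ weakly along any exhaustion. The argument rests on two observations. First, the function expressing $\xi_x$ in terms of $\{\pi_{\la,y}\}_{y\in\caN_x}$ in Lemma \ref{lem:height} does not actually depend on $\la$, as soon as $\caN_x$ is interior to $\la$: it only inspects the first edge of $\pi_{\la,x}$ (the parent direction of $x$) and the relative burning order of $x$ and its neighbours, the latter being read off from the paths $\{\pi_{\la,y}\}_{y\in\caN_x}$ themselves. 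Second, for $\gamma>0$ the infinite-volume tree paths are a.s.\ finite, so this single recipe extends verbatim to infinite volume. It is exactly this second point that requires $\gamma>0$.

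Accordingly I would first record a map $h_x:\prod_{y\in\caN_x}\caP_y\to\{0,1,\dots,2d\}$ applying the recipe above to any admissible tuple of self-avoiding paths. Under $\mu^{(\gamma)}$ with $\gamma>0$, the network walk is killed at rate $\gamma$ and hence reaches $\omega$ in finite time, so each $\pi_y$ is a.s.\ a finite path and $h_x(\pi_y:y\in\caN_x)$ is a.s.\ well-defined. By construction $h_x$ agrees with the finite-volume recovery function of Lemma \ref{lem:height} whenever $\caN_x$ is interior to $\la$.

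For the finite-dimensional distributions, fix sites $x_1,\dots,x_m$ and set $S=\bigcup_i\caN_{x_i}$. For $n$ large enough that $S$ is interior to $\la_n$, Lemma \ref{lem:height} gives $\xi_{x_i}=h_{x_i}(\pi_{\la_n,y}:y\in\caN_{x_i})$ under $\nu^{(\gamma)}_{\la_n}$, so the joint law of $(\xi_{x_1},\dots,\xi_{x_m})$ is the image under $(h_{x_i})_i$ of the joint law of the path tuple $(\pi_{\la_n,y})_{y\in S}$. A fixed finite self-avoiding path $\sigma$ from $y$ to $\omega$ is the tree path $\pi_{\la_n,y}$ precisely when all edges of $\sigma$ belong to the tree, so each event $\{\pi_{\la_n,y}=\sigma_y,\ y\in S\}$ is a cylinder in finitely many edges and its probability converges, by \eqref{muwilson}, to the corresponding $\mu^{(\gamma)}$-probability. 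These probabilities are nonnegative, converge pointwise over the countable collection of path tuples, and, the limiting paths being a.s.\ finite, sum to $1$ in the limit as well; Scheff\'e's lemma then yields convergence of the laws of $(\pi_{\la_n,y})_{y\in S}$ in total variation to their $\mu^{(\gamma)}$-law. Pushing forward by $(h_{x_i})_i$ shows that the height marginals converge and identifies $\lim_n\nu^{(\gamma)}_{\la_n}$ with the law of $\{h_x(\pi_y:y\in\caN_x)\}_{x\in\Zd}$ under $\mu^{(\gamma)}$. As $\mu^{(\gamma)}$ is exhaustion-independent by \eqref{muwilson}, so is this limit, which gives existence, uniqueness, and the asserted representation of $\nu^{(\gamma)}$ in one stroke.

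The crux is the step from pointwise convergence of the path-tuple probabilities to genuine convergence of the height laws: a priori mass could leak onto ever longer paths, and the burning-order comparison underlying $h_x$ would be ill-posed were the paths infinite. Both issues dissolve thanks to the a.s.\ finiteness of $\pi_y$ under $\mu^{(\gamma)}$ for $\gamma>0$, which simultaneously supplies the tightness needed for Scheff\'e (limiting total mass $1$) and makes $h_x$ a legitimate function of finite path tuples. I expect the only remaining routine care to concern the bookkeeping of special edges and of boundary sites in the recipe for $h_x$, harmless once $\caN_x$ is interior.
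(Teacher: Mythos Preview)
Your argument is correct and follows the same overall strategy as the paper: reduce the convergence of $\nu^{(\gamma)}_{\la_n}$ to the convergence of the joint law of the tree paths $(\pi_{\la_n,y})_{y\in\caN_x}$, and then invoke Lemma~\ref{lem:height} together with the observation that the recovery function is $\la$-independent once $\caN_x$ is interior. The only difference is in how you obtain the convergence of the path laws: the paper appeals directly to the Wilson-algorithm coupling between $\widetilde\la_n$ and $\widetilde\Zd$ (already set up in the paragraph preceding the lemma), which gives that the finite-volume and infinite-volume paths agree with probability tending to $1$, hence convergence of path distributions immediately; you instead go through the weak limit \eqref{muwilson}, identify $\{\pi_y=\sigma\}$ as a cylinder event in the edge variables, and upgrade pointwise convergence to total-variation convergence via Scheff\'e using a.s.\ finiteness of the limiting paths. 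Both routes are valid; the paper's coupling is shorter and avoids the Scheff\'e step, while your argument has the minor virtue of making explicit exactly where $\gamma>0$ enters (tightness of path lengths).
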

Ã¹
\begin{proof}
Coupling Wilson's algorithm in $\widetilde\Lambda_n$ and $\widetilde{\mathbb{Z}^d}$,
we get that the joint distribution of
$(\pi_{\Lambda_n,x_1},\dots, \pi_{\Lambda_n,x_k})$ converges to the
joint distribution of $(\pi_{x_1}, \dots, \pi_{x_k})$.
This and Lemma \ref{lem:height} show that for any
$x_1, \dots, x_k$, the joint distribution under $\nu^{(\gamma)}_{\Lambda_n}$  of
$(\xi_{x_1}, \dots, \xi_{x_k})$
converges to a limit, which has the form stated.
\end{proof}
For the next lemma, we make $m^{(\gamma)}_\Lambda$ into a
measure on $\Omega^{(\gamma)}$ via the
inclusion map
$i : \Omega^{(\gamma)}_\Lambda \to \Omega^{(\gamma)}$,
where
\begin{equation*}
 i(\eta)_x
  = \begin{cases}
    \eta_x & \text{if $x \in \Lambda$;}\\
    0      & \text{if $x \in \mathbb{Z}^d \setminus \Lambda$.}
    \end{cases}
    \end{equation*}

We then obtain the existence of the thermodynamic limit of the
stationary measures $m_\Lambda^{(\gamma)}$ as $\Lambda\uparrow\Zd$.
\begin{theorem}
\label{lem:m-limit}
For any $\gamma > 0$, and any exhaustion
$\Lambda_1 \subseteq \Lambda_2 \subseteq \dots$ of $\mathbb{Z}^d$,
the weak limit
\begin{equation*}\begin{split}
{ \lim_{n \to \infty} m^{(\gamma)}_{\Lambda_n}
  =: m^{(\gamma)} }
  \end{split}\end{equation*}
exists,
and we have $m^{(\gamma)}({\mathcal R}^{(\gamma)}) = 1$.
\end{theorem}
\begin{proof}
Recall the fact that
$m^{(\gamma)}_\Lambda$ is uniform on $(\gamma,\Lambda)$-cells.
Therefore, under the measure $m^{(\gamma)}_\Lambda$ and conditioned
on the value of
$\xi = \psi_\Lambda(\eta) \in \Omega_\Lambda^{\mathrm{discr}}$, the
random variables
$\{ \eta_y - \xi_y \}_{y \in \Lambda}$ are (conditionally)
independent, with distribution
\begin{equation}\begin{split}\label{e:unif}
 \eta_y - \xi_y &\sim \mathrm{Uniform}(0,1)
    \quad\text{for $y$ with $0 \le \xi_y \le 2d-1$;} \\
  \eta_y - \xi_y &\sim \mathrm{Uniform}(0,\gamma)
    \quad\text{for $y$ with $\xi_y = 2d$.}
    \end{split}\end{equation}
Now let $V \subseteq \Lambda$. The above implies that
if we condition on the values of
$\{ \xi_y \}_{y \in V}$ only, then
$\{ \eta_y - \xi_y \}_{y \in V}$ still has the
conditional joint distribution in \eqref{e:unif}
(under $m^{(\gamma)}_\Lambda$).
This and Lemma \ref{lem:limit} imply that
the joint law of $\{ \eta_y \}_{y \in V}$ under
$m^{(\gamma)}_{\Lambda_n}$ converges weakly as
$n \to \infty$, in the space $\Omega^{(\gamma)}_V$.

This proves the weak convergence statement.
Finally, the limit gives
probability $1$ to the event $\{\eta_V\in{\mathcal R}^{(\gamma)}_V\}$ for
any finite $V$, because
\[
m^{(\gamma)}_W \left(\{ \eta: \eta_V\in{\mathcal R}^{(\gamma)}_V\}\right)=1
\]
for all $W\supset V$, since $m^{(\gamma)}_W$
is supported by ${\mathcal R}^{(\gamma)}_W$. 
\end{proof}
\begin{remark}
\label{rem:uniform}
It follows from this proof that under the limiting measure
$m^{(\gamma)}$ we still have $\{ \eta_y - \xi_y \}_{y \in \mathbb{Z}^d}$
conditionally independent, given $\xi = \psi(\eta)$, with
distribution \eqref{e:unif}.  As we will see later on,
when $\gamma = 0$, the distribution
reduces to the first line of \eqref{e:unif}.
\end{remark}
\subsection{Exponential decay of covariances for non-zero dissipation}
We now prove that for dissipation $\gamma>0$, we have
exponential decay of correlations of local observables
in the abelian avalanche model. In \cite{mrs1} this
was shown for large enough dissipation for the  discrete  dissipative sandpile model.

By a \emph{local function}, we mean a function
$f : \Omega^{\mathrm{discr}} \to {\mathbb R}$ (respectively $f: \Omega^{(\gamma)} \to {\mathbb R}$)
such that $f$ depends only
on coordinates in a finite
set $A \subseteq \mathbb{Z}^d$.
\begin{theorem}
\label{ijsbeer}
For all $\gamma > 0$, there exist
$C = C^{(\gamma)}, c > 0$,
such that for all bounded local functions
$f, g : \Omega^{\mathrm{discr}} \to {\mathbb R}$
with dependence sets $A, B$, we have
\begin{equation}\label{e:cov}
 \left| {\mathbb E}_{\nu^{(\gamma)}} [ f g ]
  - {\mathbb E}_{\nu^{(\gamma)}} [ f ]
    {\mathbb E}_{\nu^{(\gamma)}} [ g ] \right|
  \le C |A| |B| \| f \|_\infty \| g \|_\infty
     \exp ( - c\sqrt{\gamma}\, {\mathrm{dist}}(A,B) ).
     \end{equation}
The same statement holds for the measure $m^{(\gamma)}$.
\end{theorem}
\begin{proof}
The first statement deals with discretized heights.
First consider the case when $f$ depends only on $\xi_x$,
and $g$ only on $\xi_y$. Then by Lemma \ref{lem:limit},
$f$ and $g$ depend on the paths $\{ \pi_z \}_{z \in {\mathcal N}_x}$,
and $\{ \pi_w \}_{w \in {\mathcal N}_y}$, respectively.
Use Wilson's algorithm starting with the vertices
in ${\mathcal N}_x$ and then using the vertices in ${\mathcal N}_y$.
We couple the random walks appearing in the algorithm to a new set
of random walks $\{ S_w' \}_{w \in {\mathcal N}_y}$ as follows:
$S_w' = S_w$ until the path hits
$\{ \varpi \} \cup \left(\cup_{z \in {\mathcal N}_x} \pi_z\right)$, and it
moves independently afterwards.
Let $\{ \pi_w' \}_{w \in {\mathcal N}_y}$ be the paths created
by Wilson's algorithm started from ${\mathcal N}_y$,
using the $S_w'$'s. Let $g'$ be a copy of $g$,  that is a
function of $\{\pi_w' : w \in {\mathcal N}_y\}$
instead of $\{\pi_w : w \in {\mathcal N}_y\}$. 
Then the left hand side of \eqref{e:cov} equals
${\mathbb E}_{\nu^{(\gamma)}} [ f (g - g') ]$.  This is bounded by
$2 \| f \|_\infty \| g \|_\infty$ times the probability that
$g \not= g'$. The latter is bounded by the probability
that one of the random walks used for $g$ intersects one of the
random walks used for $f$. This is bounded by
\begin{equation}\label{e:Greenestimate}
 \sum_{z \in {\mathcal N}_x} \sum_{w \in {\mathcal N}_y}
  \sum_{u \in \mathbb{Z}^d} G^{\mathrm{discr}} (z,u)
  G^{\mathrm{discr}} (w,u),
  \end{equation}
where $G^{\mathrm{discr}}$ is the Green's function
 on $\mathbb{Z}^d$  for the discrete
time walk that steps to each neighbor with probability
$1/(2d + \gamma)$ and to $\varpi$ with probability
$\gamma/(2d + \gamma)$. Hence we obtain \eqref{e:cov}
from \eqref{haha}.

In the general case, we can repeat the argument with
the vertices in ${\mathcal M}_f = \cup_{x \in A} {\mathcal N}_x$ and
${\mathcal M}_g = \cup_{y \in B} {\mathcal N}_y$. This leads to an
estimate similar to \eqref{e:Greenestimate}, where
now we sum over $z \in {\mathcal M}_f$ and $w \in {\mathcal M}_g$.
This implies the claim.

Let us prove the statement for the  continuous case,
that is the abelian avalanche model.
Due to the representation \eqref{e:unif}, if we set
\begin{equation*}\begin{split}
 f_0(\xi)
  &= {\mathbb E}_{m^{(\gamma)}} [ f(\eta) |
     \psi(\eta)_{A \cup B} = \xi_{A \cup B} ] \\
  g_0(\xi)
  &= {\mathbb E}_{m^{(\gamma)}} [ g(\eta) |
     \psi(\eta)_{A \cup B} = \xi_{A \cup B} ]
     \end{split}\end{equation*}
then we have
\begin{equation*}\begin{split}
 {\mathbb E}_{m^{(\gamma)}} [ f(\eta) g(\eta) ]
  &= {\mathbb E}_{\nu^{(\gamma)}}[ f_0(\xi) g_0(\xi) ], \\
  {\mathbb E}_{m^{(\gamma)}} [ f(\eta) ]
  &= {\mathbb E}_{\nu^{(\gamma)}} [ f_0(\xi) ], \\
  {\mathbb E}_{m^{(\gamma)}} [ g(\eta) ]
  &= {\mathbb E}_{\nu^{(\gamma)}} [ g_0(\xi) ].
   \end{split}\end{equation*}
Now we can apply the result for the model
with discretized heights to $f_0$ and $g_0$.
\end{proof}
\begin{remark}
Avalanches also satisfy exponential decay when $\gamma > 0$; see
Theorem \ref{thm:waves} item a) below.
\end{remark}
\subsection{Convergence of the stationary measures when $\gamma\to 0$}\label{convergencestat}
For the abelian sandpile model, the
measure $\nu = \nu^{(0)}$ was constructed
in \cite{aj} and \cite[Appendix]{jr}, as the
weak limit of $\nu_\Lambda^{(0)}$.
\begin{theorem}
\label{lem:zerolim}
We have $\lim_{\gamma \to 0} \nu^{(\gamma)} = \nu^{(0)}$,
and $\lim_{\gamma \to 0} m^{(\gamma)} = m^{(0)}$.
\end{theorem}
\begin{proof}
For $\xi_\Lambda\in\Omega^{{\mathrm{discr}}}_\Lambda$ chosen
from $\nu^{(\gamma)}_\Lambda$,
consider the random field of maximal heights
$\{ h_{\Lambda,x} \}_{x \in \Lambda}
:= \{ I[ \xi_{\Lambda,x} = 2d ] \}_{x \in \Lambda}$
(where $I[ \xi_{\Lambda,x} = 2d ] $ denotes the
indicator function of the event $\{ \xi_{\Lambda,x} = 2d \}$), and
the random set
$H_\Lambda = \{ x \in \Lambda : h_{\Lambda,x} = 1 \}$.
Due to the correspondence ${\mathcal T}_\Lambda$, ${\mathbb P} (x \in H_\Lambda)$
equals the probability that the dissipative edge
containing $x$ is included in ${\mathcal T}_\Lambda(\xi_\Lambda)$.
Using Wilson's algorithm started at $x$
and \eqref{majum}, we
see that this probability vanishes in the limit
$\gamma \to 0$, uniformly in $\Lambda$.
Hence
for any finite $V \subseteq \mathbb{Z}^d$,
${\mathbb P}( H_\Lambda \cap V = \varnothing )$ goes to $1$
as $\gamma \to 0$, uniformly in $\Lambda$.

By  \eqref{majum}, after removal of the sites with $\xi_{\Lambda,x}=2d$, the joint
distribution of the heights of the remaining sites is uniform
on allowed configurations with
heights in $\{0,1,\ldots, 2d-1\}$.
Therefore, given $H_\Lambda$,
the conditional distribution of
$\{ \xi_{\Lambda,x} \}_{x \in \Lambda \setminus H_\Lambda}$
is given by $\nu^{(0)}_{\Lambda \setminus H_\Lambda}$,
the
measure for the abelian sandpile model in
$\Lambda \setminus H_\Lambda$. Due to the convergence
$\nu^{(0)}_W \to \nu^{(0)}$, as $W\uparrow\mathbb{Z}^d$,  for large $V$ and
on the event $\{ H_\Lambda \cap V = \varnothing \}$
the conditional distribution is close to
$\nu^{(0)}$, uniformly in $H_\Lambda$ and
$\Lambda \supset V$. The above observations
imply that as $\gamma \to 0$ and $\Lambda \to \mathbb{Z}^d$,
$\nu^{(\gamma)}_\Lambda \to \nu^{(0)}$. This
implies the first statement of the Proposition.

For the second statement, we again use the
representation \eqref{e:unif}. Then the
convergence of finite-dimensional distributions
follows from the first part of the Proposition.
Tightness holds trivially since all the measures
under consideration have support contained
in (the same) compact set.
\end{proof}
\begin{remark}
\label{rem:determ}
The process
$\{ h_x \}_{x \in \mathbb{Z}^d}$ is a determinantal
process \cite{HKPV06},
that is there exists a kernel ${\mathbf K}^{(\gamma)}(x,y)$, such that
for $n \ge 1$ and distinct $x_1, \dots, x_n \in \mathbb{Z}^d$,
\begin{equation*}
 {\mathbb P} (h_{x_1} = 1,\dots,h_{x_n} = 1)
  = \det ({\mathbf K}^{(\gamma)}(x_i,x_j))_{i,j=1}^n.
  \end{equation*}
This follows from the Transfer Current Theorem
of Burton \& Pemantle \cite{bupe} applied to
the collection of dissipative edges.
\end{remark}
\subsection{Speed of convergence for special events}
The following proposition gives an estimate of the speed of convergence
in Theorem \ref{lem:zerolim} for the probability of having
mimimal height, for which the method of Dhar \& Majumdar
(see \cite{md2,mru}) can be applied. Power law estimates for
general cylinder events are proved in \cite{Jarai10}.

We call a set of points $\{x_1,\ldots,x_k\}\subseteq \mathbb{Z}^d$ isolated from
each other if there does not exist $y\in\mathbb{Z}^d$ that is a neighbor
of more than one site in the set. Denote
\[
  E_{x_1,\ldots,x_k}
  = \left\{\eta\in{\mathcal X}:\forall i\in \{ 1,\ldots,k\},0\leq \eta_{x_i} <1\right\}
\]
\begin{proposition}
\label{prop:rate}
For $d \ge 2$ and $k\in{\mathbb N}$ there exists $C(k,d) >0$ such that
for all sets $\{x_1,\ldots, x_k\}$ of isolated points we have
\begin{equation}
|m^{(\gamma)} (E_{x_1,\ldots,x_k})
-m^{(0)} (E_{x_1,\ldots,x_k})|
\leq \begin{cases}
  C(k,d) \gamma & \text{if $d \ge 3$;}\\
  C(k,2) \gamma \log(1/\gamma) & \text{if $d = 2$.}
     \end{cases}
\end{equation}
\end{proposition}
\begin{proof}
\textit{(a)} Start with $d\geq 3$.
First remark that, using
the notation of section \ref{ssec:trees}, for $\gamma\geq 0$,
$m^{(\gamma)}(E_{x_1,\ldots,x_k})= \nu^{(\gamma)}(\xi_{x_1}=0,\ldots,\xi_{x_k}=0)$.
In terms of the discretized height-configuration, this probability
can be computed using the method of \cite{md2}: the allowed
discretized configurations with $(\xi_{x_1}=0,\ldots,\xi_{x_k}=0)$
are in one-to-one correspondence with allowed configurations
on a modified graph. The latter is obtained by removing
the dissipative edge at each $x_j$ and all ordinary edges emanating from each
$x_j$, except the one leading to $x_j + (1,0,\dots,0)$.
Then
\[
\nu^{(\gamma)} (\xi_{x_1}=0,\ldots,\xi_{x_k}=0)= \det (I + K^{(\gamma)} B^{(\gamma)}),
\]
where $B^{(\gamma)}$, $K^{(\gamma)}$ are finite matrices
(and $I$ denotes the identity matrix)
with $B^{(\gamma)}$ not depending on the particular
sites in $\{x_1,\ldots x_k\}$, and elements of $K^{(\gamma)}$
have the form $G^{(\gamma)}(u,v)$. Moreover the off-diagonal elements
of $B^{(\gamma)}$ do not depend on $\gamma$ and for the diagonal
elements we have $|B^{(\gamma)}(x,x)-B^{(0)} (x,x)| = \gamma$.
We have the uniform estimate
\[
 0\leq G^{(0)}(x,y)-G^{(\gamma)} (x,y)\leq C \gamma.
\]
This can be seen for example from the Fourier
representation   (see \cite[Propositions 4.2.3, 4.4.3]{LL10}, \cite{spitzer})
\[
 G^{(\gamma)} (x,y) = \int_{[0,2\pi)^d} \frac{e^{ik(x-y)}}{\Gamma^{(\gamma)} (k)} dk
\]
with
\[
 \Gamma^{(\gamma)} (k)
 = 1- (1-\gamma) \left(\frac1{2d}\sum_{e\in\mathbb{Z}^d:|e|=1} \cos (k\cdot e) \right).
\]
This gives
\[
  | \mbox{det} (I + G^{(\gamma)} B^{(\gamma)}) - \mbox{det} (I + G^{(0)} B^{(0)})|
  \leq C(k) \gamma.
\]
\textit{(b)} In the case $d=2$ we cannot directly work in infinite volume,
as the Green's function is divergent.
So we proceed by finite volume approximations: take $\Lambda $ big enough
so that it contains $x_1,\ldots, x_k$ and all their neighbors:
\[
 \nu^{(\gamma)}_\Lambda (\xi_{x_1}=0,\ldots,\xi_{x_k}=0)
 = \det (I + K^{(\gamma)}_\Lambda B^{(\gamma)}).
\]
The matrix $B^{(0)}$ has zero column sums.
Therefore, $A^{(\gamma)}_\Lambda B^{(0)}=0$ for a matrix $A^{(\gamma)}_\Lambda$
with identical entries equal to $G_\Lambda^{(\gamma)} (0,0)$.
Hence, we can write
\[
 K^{(\gamma)}_\Lambda B^{(\gamma)}-K_\Lambda^{(0)} B^{(0)}
 = K_\Lambda^{(\gamma)} (B^{(\gamma)}-B^{(0)})
   + \left((K^{(\gamma)}_\Lambda- A^{(\gamma)}_\Lambda)
   - (K^{(0)}_\Lambda-A^{(0)}_\Lambda)\right) B^{(0)}.
\]
For a matrix $D^{(\gamma)}$, and a function
$f:(0,1]\to (0,\infty)$ we write $D^{(\gamma)}= O(f(\gamma))$
if for all entries $(i,j)$,
$|D^{(\gamma)}_{ij} |\leq C f(\gamma)$.
Then if $f$ is such that $f(\gamma)\to 0$ as $\gamma\to 0$,
and if $D-D'= O(f(\gamma))$ we have
$|\det (I +D)- \det (I+D') |\leq C' f(\gamma)$.

Therefore, in order to prove the statement for $d=2$,
if suffices to see that
\[
 K^{(\gamma)}_\Lambda B^{(\gamma)}-K_\Lambda^{(0)} B^{(0)}= O(\gamma\log(1/\gamma)).
\]
By Fourier representation of the potential kernel in $d=2$
\[
 (K^{(\gamma)}_\Lambda- A^{(\gamma)}_\Lambda)- (K^{(0)}_\Lambda-A^{(0)}_\Lambda)= O(\gamma)
\]
uniformly in $\Lambda$. Furthermore, uniformly in $\Lambda$,
\[
 K_\Lambda^{(\gamma)} (B^{(\gamma)}-B^{(0)})= O(\gamma \log (1/\gamma) ).
\]
\end{proof}
\section{The dynamics of the dissipative model}
\label{sec:dynamics}
\subsection{Legal and exhaustive sequences of topplings}
Let ${\varphi} : \mathbb{Z}^d \to (0, \infty)$ be a bounded function, and
let $(N^{\varphi}_t)_{t\geq 0}:= \{ N^{\varphi}_{x,t} \}_{x \in \mathbb{Z}^d,\, t \ge 0}$ be a
collection of independent Poisson processes
on a
probability space $(X, {\mathcal F}, {\mathbb P})$, where
$(N^{\varphi}_{x,t})_{t\geq 0}$ has rate ${\varphi}(x)$.
We want to define the
dynamics $\{ \eta_t \}_{t \ge 0}$ of the dissipative model
for an initial configuration $\eta_0$ as the result of
stabilizing $\eta_0 + N^{\varphi}_t$.  Since this typically
involves infinitely many topplings, it requires some care.
The result of the next two lemmas will be that the
procedure is well-defined.

First we consider $\gamma$-stabilization of infinite configurations.
Fix $\eta \in {\mathcal X}$, and a sequence $x_1, x_2, \dots$ of
legal $\gamma$-topplings.
\begin{definition}\label{exhau}
A sequence $x_1, x_2, \ldots$ of
legal $\gamma$-topplings
of a configuration $\eta$ is called $\gamma$-\emph{exhaustive},
if for every $n\in{\mathbb N}$,
and for every $\gamma$-unstable site $x$ of
$T_{x_n}\circ\ldots\circ T_{x_1} (\eta)$,
there exists $m>n$ such that $x_m=x$.
A sequence $x_1, x_2, \ldots$ of
legal $\gamma$-topplings
of a configuration $\eta$ is called $\gamma$-\emph{stabilizing}
if the limit $\lim_{n\to\infty}T_{x_n}\circ\ldots\circ T_{x_1} (\eta)$
is $\gamma$-stable.
\end{definition}
The notion of an exhaustive sequence of topplings slightly generalizes what
in the finite case corresponds to a stabilizing sequence of topplings. In particular
in an exhaustive sequence sites are allowed to topple infinitely many times.
Just as in the finite case, the number of times each site
topples in a $\gamma$-exhaustive sequence (which may be infinite)
is independent of the sequence (see \cite[Lemma 4.1]{JL07} for a proof).
It can also be seen the same
way that if $y_1, y_2, \dots$ is another legal
sequence of $\gamma$-topplings, then each site $\gamma$-topples at most as many
times as in a $\gamma$-exhaustive sequence.
Call
\begin{equation*}
 (N^{(\gamma)}(\eta))_x
  = \text{the number of times $x$ $\gamma$-topples
    in a $\gamma$-exhaustive sequence}.
    \end{equation*}
We say that $\eta \in {\mathcal X}$ is \emph{$\gamma$-stabilizable}, if
$(N^{(\gamma)}(\eta))_x < \infty$ for all $x \in \mathbb{Z}^d$. In this
case, similarly to \eqref{e:stabilize}, the $\gamma$-stabilization is
related to the original configuration by the formula:
\begin{equation*}
 {\mathcal S}^{(\gamma)}(\eta)
  = \eta - \Delta^{(\gamma)}N^{(\gamma)}(\eta) .
  \end{equation*}
Note that every $\gamma$-stabilizing sequence is $\gamma$-exhaustive and if the configuration
is $\gamma$-stabilizable, every $\gamma$-exhaustive sequence is $\gamma$-stabilizing.
Recall that $(N^{(\gamma)}_\Lambda(\eta))_x$ denotes the number
of times $x \in \Lambda$ $\gamma$-topples during $\gamma$-stabilization in $\Lambda$.
We will see in Lemma \ref{lem:dynamics} that any
configuration that does not grow too fast at infinity
is $\gamma$-stabilizable.
\begin{lemma}
\label{lem:stabilize}
Let $\gamma \ge 0$, $\eta\in {\mathcal X}$. \\
(i) The vector $N^{(\gamma)}_\Lambda(\eta)$ is componentwise
monotone increasing in $\Lambda$ and $\eta$.\\
(ii) If $\eta$ is $\gamma$-stabilizable,
we have $(N^{(\gamma)}(\eta))_x = \sup_\Lambda (N^{(\gamma)}_\Lambda(\eta))_x$,
$x \in \mathbb{Z}^d$.\\
(iii) If $\eta$ is $\gamma$-stabilizable, we have
${\mathcal S}^{(\gamma)}(\eta) = \lim_\Lambda {\mathcal S}^{(\gamma)}_\Lambda(\eta)
 \in \Omega^{(\gamma)}$.\\
(iv) If $\zeta_1, \zeta_2 \in {\mathcal X}$, and $\zeta_1 + \zeta_2$ is
$\gamma$-stabilizable, then we have
\begin{equation}\label{e:Abelianstab}
 {\mathcal S}^{(\gamma)}( \zeta_1 + \zeta_2 )
  = {\mathcal S}^{(\gamma)}( {\mathcal S}^{(\gamma)}(\zeta_1) + \zeta_2 ).
  \end{equation}
\end{lemma}
\begin{proof}
{\em (i)} Consider $\Lambda \subseteq \Lambda'$.   First $\gamma$-stabilize $\eta$ in $\Lambda$,
and record the amount of height flowing out of $\Lambda$.
Now $\gamma$-stabilize in $\Lambda'$, with further $\gamma$-topplings,
if necessary. To prove the other statement, consider
$\eta \le \eta'$. First $\gamma$-stabilize $\eta$. Adding height
$\eta' - \eta$ does not affect the
legality of the
sequence of $\gamma$-topplings. Hence we can $\gamma$-stabilize $\eta'$
by further $\gamma$-topplings if necessary.

\noindent{\em (ii)} Fix a sequence $\Lambda_1 \subseteq \Lambda_2 \subseteq \dots$ such
that $\cup_k \Lambda_k = \mathbb{Z}^d$. First $\gamma$-stabilize $\eta$ in $\Lambda_1$, then
in $\Lambda_2$, and so on. We thus get a $\gamma$-exhaustive sequence, and
each site $x$ $\gamma$-topples $\sup_\Lambda (N^{(\gamma)}_\Lambda\eta)_x$ times.

\noindent{\em (iii)} Note that if $\eta$ is $\gamma$-stabilizable, we have
$N^{(\gamma)}\eta = \lim_\Lambda (N^{(\gamma)}_\Lambda\eta)$, and hence
by \eqref{e:stabilize}
\begin{equation*}
 {\mathcal S}^{(\gamma)}(\eta)
  = \eta - \Delta^{(\gamma)} N^{(\gamma)}\eta
  = \lim_\Lambda \left[ \eta - \Delta^{(\gamma)} N^{(\gamma)}_\Lambda\eta \right]
  = \lim_{\Lambda} {\mathcal S}^{(\gamma)}_\Lambda (\eta).
  \end{equation*}
\noindent{\em (iv)}  This is essentially not more than the
Abelian property. However, since there can be infinitely many
$\gamma$-topplings in $\gamma$-stabilizing $\zeta_1$, care needs to be taken.
Note that $\zeta_1 \le \zeta_1 + \zeta_2$
implies that any
legal sequence for
$\zeta_1$ is
legal for $\zeta_1 + \zeta_2$ as well.
In particular, $\zeta_1$ is also $\gamma$-stabilizable.
Fix a $\gamma$-stabilizing sequence $y_1, y_2, \dots$ for $\zeta_1$.
We now construct a $\gamma$-exhaustive sequence for
$\zeta_1 + \zeta_2$. Fix again
$\Lambda_1 \subseteq \Lambda_2 \subseteq \dots$
We write
\begin{equation*}\begin{split}
 \partial_+ \Lambda
  &= \{ x \in \Lambda^c :
     \text{$\exists\, y \in \Lambda$ such that $x \sim y$} \}; \\
  \partial_- \Lambda
  &= \{ y \in \Lambda :
     \text{$\exists\, x \in \Lambda^c$ such that $x \sim y$} \}; \\
     \overline\Lambda
  &= \Lambda \cup \partial_+ \Lambda.
  \end{split}\end{equation*}
Let $m_1$ be an index such that, for all
$y \in \overline{\Lambda_1}$,
\begin{equation*}
 | \{ 1 \le k \le m_1 : y_k = y \} |
  = (N^{(\gamma)}\zeta_1)_y.
  \end{equation*}
i.e., after index $m_1$ there will be no further $\gamma$-topplings
in $\overline{\Lambda_1}$ in the $\gamma$-stabilization of $\zeta_1$.
Note that $y_1, \dots, y_{m_1}$ is
legal for
$\zeta_1 + \zeta_2$. Now stabilize
$T_{y_{m_1}} \circ \dots T_{y_1} (\zeta_1 + \zeta_2)$
in $\Lambda_1 \setminus \partial_- \Lambda_1$, recording the amount
of height flowing out of $\Lambda_1 \setminus \partial_- \Lambda_1$.
This leads to potential further $\gamma$-topplings in
$\Lambda_1 \setminus \partial_- \Lambda_1$ at
$z_1, \dots, z_{n_1}$. By construction,
$y_1, \dots, y_{m_1}, z_1, \dots, z_{n_1}$ is
legal for
$\zeta_1 + \zeta_2$, and $z_1, \dots, z_{n_1}$ is
legal for
${\mathcal S}^{(\gamma)}(\zeta_1) + \zeta_2$. Moreover, the configuration in
$\Lambda_1 \setminus \partial_- \Lambda_1$ after $\gamma$-topplings at
$y_1, \dots, y_{m_1}, z_1, \dots, z_{n_1}$ coincides with
the $\gamma$-stabilization of ${\mathcal S}^{(\gamma)}(\zeta_1) + \zeta_2$ in
$\Lambda_1 \setminus \partial_- \Lambda_1$.

Now select an index $m_2 \ge m_1$, such that for all
$y \in \overline{\Lambda_2}$,
\begin{equation*}
 | \{ 1 \le k \le m_2 : y_k = y \} |
  = (N^{(\gamma)}\zeta_1)_y.
  \end{equation*}
Since the $\gamma$-topplings at $z_1, \dots, z_{n_1}$ do not
change the configuration in $\Lambda_1^c$, the sequence
\begin{equation*}
 y_1, \dots, y_{m_1},
  z_1, \dots, z_{n_1},
  y_{m_1+1}, \dots, y_{m_2}
  \end{equation*}
is legal for $\zeta_1 + \zeta_2$. Now we $\gamma$-stabilize in
$\Lambda_2 \setminus \partial_- \Lambda_2$ via $\gamma$-topplings
at $z_{n_1+1}, \dots, z_{n_2}$. Again by construction,
\begin{equation*}
 y_1, \dots, y_{m_1},
  z_1, \dots, z_{n_1},
  y_{m_1+1}, \dots, y_{m_2},
  z_{n_1+1}, \dots, z_{n_2}
  \end{equation*}
is legal for $\zeta_1 + \zeta_2$, and
\begin{equation*}
 z_1, \dots, z_{n_1},
  z_{n_1+1}, \dots, z_{n_2}
  \end{equation*}
is legal for ${\mathcal S}^{(\gamma)}(\zeta_1) + \zeta_2$.
Continuing this argument, we obtain an interlacement
of two sequences $y_1, y_2, \dots$ and $z_1, z_2, \dots$
$\gamma$-exhaustive for
$\zeta_1 + \zeta_2$, and hence its final result is
${\mathcal S}^{(\gamma)}(\zeta_1 + \zeta_2)$. On the other hand, the
configuration in $\Lambda_k \setminus \partial_- \Lambda_k$
after the $\gamma$-topplings at
\begin{equation*}
y_1, \dots, y_{m_1},
  z_1, \dots, z_{n_1},
  \dots
  y_{m_{k-1}+1}, \dots, y_{m_k},
  z_{n_{k-1}+1}, \dots, z_{n_k}
  \end{equation*}
coincides with the $\gamma$-stabilization of
${\mathcal S}^{(\gamma)}(\zeta_1) + \zeta_2$ in
$\Lambda_k \setminus \partial_- \Lambda_k$.
This yields the claim.
\end{proof}
\begin{lemma}
\label{lem:dynamics}
Let $\gamma > 0$. \\
(i) If $\eta \in {\mathcal X}$ satisfies
$\sum_{y \in \mathbb{Z}^d} G^{(\gamma)}(x,y) \eta_y < \infty$ for
all $x \in \mathbb{Z}^d$, then $\eta$ is $\gamma$-stabilizable.\\
(ii) With ${\mathbb P}$-probability $1$, for any $\eta_0 \in \Omega^{(\gamma)}$
and any $t \ge 0$, $\eta_0 + N^{\varphi}_t$ is $\gamma$-stabilizable.\\
\end{lemma}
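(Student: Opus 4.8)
The plan is to prove (i) by extracting a bound on the finite-volume toppling numbers that is uniform in $\la$, and then to obtain (ii) from (i) via a first-moment estimate.

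For (i), I would fix a finite $\la$ and use that stabilization in $\la$ always terminates, so by \eqref{e:stabilize} the output satisfies $(\caS^{(\gamma)}_\la\eta)_y = \eta_y - (\Delta^{(\gamma)}_\la N^{(\gamma)}_\la\eta)_y \ge 0$ for $y \in \la$, since every height is nonnegative. Rearranged, this reads $\Delta^{(\gamma)}_\la N^{(\gamma)}_\la\eta \le \eta$ componentwise on $\la$. The key observation is that $G^{(\gamma)}_\la = (\Delta^{(\gamma)}_\la)^{-1}$ has nonnegative entries---it is the Green's function of the killed network walk---so applying it preserves the inequality and gives $(N^{(\gamma)}_\la\eta)_x \le \sum_{y\in\la} G^{(\gamma)}_\la(x,y)\,\eta_y$. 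By the monotonicity $G^{(\gamma)}_\la(x,y)\le G^{(\gamma)}(x,y)$ from \eqref{haha}, the right-hand side is at most $\sum_{y\in\Zd} G^{(\gamma)}(x,y)\,\eta_y$, which is finite by hypothesis and, decisively, does not depend on $\la$. Letting $\la\uparrow\Zd$ and applying Lemma \ref{lem:stabilize}(ii), I conclude $(N^{(\gamma)}\eta)_x = \sup_\la (N^{(\gamma)}_\la\eta)_x < \infty$ for all $x$, i.e. $\eta$ is $\gamma$-stabilizable.

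For (ii), it suffices to verify $\Pr$-a.s. that $\eta_0 + N^\vi_t$ satisfies the hypothesis of (i), simultaneously for every $\eta_0 \in \Omega^{(\gamma)}$ and every $t \ge 0$. I would split $\sum_y G^{(\gamma)}(x,y)\big((\eta_0)_y + N^\vi_{y,t}\big)$ into two parts. Since $(\eta_0)_y < 2d+\gamma$ and $C_\gamma := \sum_y G^{(\gamma)}(x,y) < \infty$ (finite and $x$-independent by Lemma \ref{masgreen} and translation invariance; in fact equal to $1/\gamma$ because $\Delta^{(\gamma)}\mathbf 1 = \gamma\mathbf 1$), the $\eta_0$-part is at most $(2d+\gamma)C_\gamma$, uniformly in $\eta_0$. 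For the Poisson part, for fixed $x$ and $t$ the mean is $\E\sum_y G^{(\gamma)}(x,y)N^\vi_{y,t} = t\sum_y G^{(\gamma)}(x,y)\vi(y) \le t\|\vi\|_\infty C_\gamma < \infty$, so the sum is a.s. finite; intersecting over the countable family $x\in\Zd$, $t\in\N$ and using that $t\mapsto N^\vi_{y,t}$ is nondecreasing upgrades this to a.s. finiteness for all real $t \ge 0$ at once. Together with the uniform bound on the $\eta_0$-part, the hypothesis of (i) holds for all $\eta_0$ and $t$, and (i) delivers $\gamma$-stabilizability.

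I expect the genuine content to sit in the a priori bound of (i): the point is that a $\gamma$-stable output forces $\Delta^{(\gamma)}_\la N^{(\gamma)}_\la\eta \le \eta$ and that inverting through the nonnegative matrix $G^{(\gamma)}_\la$ yields a bound uniform in $\la$---uniformity is exactly what lets Lemma \ref{lem:stabilize}(ii) convert finite-volume control into infinite-volume stabilizability. In (ii) the only delicate step is passing from a fixed $t$ to all $t$ simultaneously, which the monotonicity in $t$ resolves without extra work.
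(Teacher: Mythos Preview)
Your proof is correct and follows essentially the same route as the paper: for (i) you derive $\Delta^{(\gamma)}_\la N^{(\gamma)}_\la\eta \le \eta$ from nonnegativity of the stabilized output, invert through $G^{(\gamma)}_\la \ge 0$, and bound uniformly in $\la$ via \eqref{haha}; for (ii) you verify the hypothesis of (i) by splitting off the bounded $\eta_0$-part and handling the Poisson part by a first-moment estimate, exactly as the paper does (though more explicitly). One small caveat: Lemma~\ref{lem:stabilize}(ii) is stated with $\gamma$-stabilizability as a hypothesis, so citing it to \emph{deduce} stabilizability is formally circular---but its proof in fact constructs an exhaustive sequence with $\sup_\la (N^{(\gamma)}_\la\eta)_x$ topplings at each $x$ regardless, so finiteness of the sup directly gives $(N^{(\gamma)}\eta)_x < \infty$.
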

\begin{proof}
{\em (i)} By \eqref{e:stabilize},
\begin{equation*}
\Delta^{(\gamma)}_\Lambda N^{(\gamma)}_\Lambda\eta
  = \eta - {\mathcal S}^{(\gamma)}_\Lambda (\eta)
  \le \eta.
  \end{equation*}
Hence, by \eqref{haha}
\begin{equation}\begin{split}\label{e:numtop}
 \sup_\Lambda (N^{(\gamma)}_\Lambda\eta)_x
  \le \sup_\Lambda \sum_{y \in \Lambda} G^{(\gamma)}_\Lambda(x,y)
    \eta_y
  \le \sum_{y \in \mathbb{Z}^d} G^{(\gamma)}(x,y)
    \eta_y
  < \infty.
  \end{split}\end{equation}
{\em (ii)} Due to the boundedness of ${\varphi}$ and estimate
\eqref{haha}, the configuration
$\eta_0 + N^{\varphi}_t$ satisfies the condition in part {\em (i)}
for all $t \ge 0$, with probability 1.
\end{proof}
By Lemmas \ref{lem:stabilize} and \ref{lem:dynamics},
the process
\begin{equation}\label{e:processdef}
 \eta_t
  = {\mathcal S}^{(\gamma)}( \eta_0 + N^{\varphi}_t )
  \end{equation}
is well-defined for any initial configuration
$\eta_0 \in \Omega^{(\gamma)}$.
The computation in \eqref{e:numtop} also gives that the
addition operators
\begin{equation*}
 a^{(\gamma)}_x \eta
  := {\mathcal S}^{(\gamma)} (\eta + \delta_x)
  = \lim_\Lambda {\mathcal S}^{(\gamma)}_\Lambda (\eta + \delta_x)
  \end{equation*}
 are defined for any $\eta \in \Omega^{(\gamma)}$, $\gamma > 0$.

\subsection{Finiteness of avalanches}
Recall that $n^{(\gamma)}_\Lambda (x,y,\eta_0)$
denotes the number of $\gamma$-topplings
occurring at $y$ in computing
${\mathcal S}^{(\gamma)}_\Lambda(\eta_0 + \delta_x)$.
We also define $n^{(\gamma)}(x,y,\eta_0)
:= \sup_\Lambda n^{(\gamma)}_\Lambda(x,y,\eta_0)$.
We call the sets 
\begin{equation}\begin{split}\label{finvolav}
 {\mathrm{Av}}^{(\gamma)}_{\Lambda,x}(\eta)
  &= \{ y \in \Lambda : n^{(\gamma)}_\Lambda(x,y,\eta) \geq 1 \} \\
  {\mathrm{Av}}^{(\gamma)}_x(\eta)
  &= \{ y \in \mathbb{Z}^d : n^{(\gamma)}(x,y,\eta) \geq 1 \}.
\end{split}\end{equation}
the \emph{$\gamma$-avalanche clusters}
started at $x$ in $\Lambda$, resp.~in $\mathbb{Z}^d$. 
\begin{lemma}
\label{lem:invariance}
Let $\gamma > 0$.\\
(i) We have
$m^{(\gamma)} \left( |{\mathrm{Av}}^{(\gamma)}_x(\eta)| < \infty \right)
= 1$.\\
(ii) The transformation $a^{(\gamma)}_x$ leaves $m^{(\gamma)}$
invariant.
\end{lemma}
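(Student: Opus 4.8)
The plan is to handle the two parts in order, using Dhar's formula \eqref{dhar}, the exponential bound \eqref{haha}, and the weak convergence $m^{(\gamma)}_{\la_n}\to m^{(\gamma)}$ of Lemma \ref{lem:m-limit} as the main inputs. Throughout I exploit that $\sum_{y} G^{(\gamma)}(x,y)$ is the expected lifetime of the killed walk, which equals $(2d+\gamma)/\gamma<\infty$ for $\gamma>0$ by summing \eqref{e:Greensum} over $y$.

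For part (i) I would bound the mean avalanche size by the Green's function. Since $|\Av^{(\gamma)}_x|=\sum_{y}\1[n^{(\gamma)}(x,y,\eta)\ge 1]$, it suffices to prove the pointwise bound $m^{(\gamma)}(n^{(\gamma)}(x,y,\eta)\ge 1)\le G^{(\gamma)}(x,y)$ for each $y$; summing then gives $\E_{m^{(\gamma)}}(|\Av^{(\gamma)}_x|)<\infty$, hence $|\Av^{(\gamma)}_x|<\infty$ almost surely. To obtain the pointwise bound I would fix a finite $\La$, note that $\eta\mapsto n^{(\gamma)}_\La(x,y,\eta)$ depends only on $\eta|_\La$ and is constant off a Lebesgue-null union of height thresholds, and use that the finite-dimensional marginals of $m^{(\gamma)}$ are absolutely continuous by \eqref{e:unif}. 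Thus $\{n^{(\gamma)}_\La(x,y,\cdot)\ge 1\}$ is an $m^{(\gamma)}$-continuity set, so weak convergence combined with the monotonicity $n^{(\gamma)}_\La\le n^{(\gamma)}_{\la_n}$ (Lemma \ref{lem:stabilize}(i)) and the Markov estimate \eqref{markest} yields $m^{(\gamma)}(n^{(\gamma)}_\La(x,y)\ge 1)\le \limsupn G^{(\gamma)}_{\la_n}(x,y)\le G^{(\gamma)}(x,y)$. Letting $\La\uparrow\Zd$ and using $n^{(\gamma)}=\sup_\La n^{(\gamma)}_\La$ (Lemma \ref{lem:stabilize}(ii)) upgrades this to the infinite-volume statement by monotone convergence.

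For part (ii) I would test $m^{(\gamma)}$ against bounded continuous local functions $f$ (depending on a finite set $V$), which determine the measure, and show $\E_{m^{(\gamma)}}[f\circ a^{(\gamma)}_x]=\E_{m^{(\gamma)}}[f]$. The starting point is the exact finite-volume invariance $\E_{m^{(\gamma)}_{\la_n}}[f\circ a^{(\gamma)}_{\la_n,x}]=\E_{m^{(\gamma)}_{\la_n}}[f]$ from Proposition \ref{gabrielov}(iii), whose right side tends to $\E_{m^{(\gamma)}}[f]$ by weak convergence. The crux is to show the left side tends to $\E_{m^{(\gamma)}}[f\circ a^{(\gamma)}_x]$, and for this I would interpose a fixed large finite volume $\La\supseteq V$ with its local operator $a^{(\gamma)}_{\La,x}$. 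The height formula \eqref{e:addition} shows $f\circ a^{(\gamma)}_{\la_n,x}$ and $f\circ a^{(\gamma)}_{\La,x}$ can differ only on $\{\exists\, y\in\partial_-\La:\ n^{(\gamma)}_{\la_n}(x,y,\eta)\ge 1\}$, which by \eqref{markest} has $m^{(\gamma)}_{\la_n}$-mass at most $\rho(\La):=\sum_{y\in\partial_-\La}G^{(\gamma)}(x,y)$, uniformly in $n$. The same comparison on the limit side, now using part (i), bounds $|\E_{m^{(\gamma)}}[f\circ a^{(\gamma)}_x]-\E_{m^{(\gamma)}}[f\circ a^{(\gamma)}_{\La,x}]|$ by a constant times $\rho(\La)$. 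Since $f\circ a^{(\gamma)}_{\La,x}$ is bounded, depends on finitely many coordinates, and is $m^{(\gamma)}$-a.e.\ continuous, weak convergence gives $\E_{m^{(\gamma)}_{\la_n}}[f\circ a^{(\gamma)}_{\La,x}]\to\E_{m^{(\gamma)}}[f\circ a^{(\gamma)}_{\La,x}]$. Combining the three estimates yields $\limsupn|\E_{m^{(\gamma)}_{\la_n}}[f\circ a^{(\gamma)}_{\la_n,x}]-\E_{m^{(\gamma)}}[f\circ a^{(\gamma)}_x]|\le C\rho(\La)$, and letting $\La\uparrow\Zd$ (so $\rho(\La)\to 0$, being a tail of the convergent series $\sum_y G^{(\gamma)}(x,y)$) closes the argument.

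The main obstacle is precisely this interchange of the exhaustion limit $\la_n\uparrow\Zd$ with the addition operator: the finite-volume maps $a^{(\gamma)}_{\la_n,x}$ act on growing volumes and are only known to converge pointwise (Lemma \ref{lem:stabilize}(iii)), which is incompatible with mere weak convergence of the measures. The device that resolves it is the uniform-in-$n$ tail bound $\rho(\La)$, which lets me replace the non-local operators $a^{(\gamma)}_{\la_n,x}$ by a single fixed \emph{local} operator $a^{(\gamma)}_{\La,x}$ up to an error independent of $n$, after which ordinary weak convergence of a bounded local observable applies. The two routine points to verify en route are that the toppling-threshold sets are $m^{(\gamma)}$-null (justifying the portmanteau/continuity-set steps), which follows from the absolute continuity in \eqref{e:unif}, and that $\sum_y G^{(\gamma)}(x,y)<\infty$, which follows from \eqref{e:Greensum}.
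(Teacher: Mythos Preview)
Your proof is correct and follows essentially the same route as the paper. For part (i), the paper bounds $\E_{m^{(\gamma)}}[n^{(\gamma)}(x,y,\eta)]\le G^{(\gamma)}(x,y)$ via the same double-limit scheme (monotone convergence in $\La$, weak convergence $m^{(\gamma)}_V\to m^{(\gamma)}$ for the bounded local observable $n^{(\gamma)}_\La$, monotonicity $n^{(\gamma)}_\La\le n^{(\gamma)}_V$, and Dhar's formula), whereas you bound $m^{(\gamma)}(n^{(\gamma)}(x,y,\eta)\ge 1)$ instead; both immediately give $\E_{m^{(\gamma)}}|\Av^{(\gamma)}_x|<\infty$. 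For part (ii), the paper does not write out an argument but simply records that (i) implies (ii) by the argument of \cite[Section~4]{jr}; the localization device you use---replacing $a^{(\gamma)}_{\la_n,x}$ and $a^{(\gamma)}_x$ by a fixed local $a^{(\gamma)}_{\La,x}$ up to the uniform tail error $\rho(\La)=\sum_{y\in\partial_-\La}G^{(\gamma)}(x,y)$, then passing to the weak limit for the bounded $m^{(\gamma)}$-a.e.\ continuous local function $f\circ a^{(\gamma)}_{\La,x}$---is precisely that argument spelled out.
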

\begin{proof}
Take $\eta_0$ distributed according to $m^{(\gamma)}$.
Due to \eqref{dhar},
\begin{equation}\begin{split}\label{e:fromdhar}
 {\mathbb E}_{m^{(\gamma)}} [ n^{(\gamma)}(x,y,\eta_0) ]
  &= \lim_\Lambda
    {\mathbb E}_{m^{(\gamma)}} [ n^{(\gamma)}_\Lambda(x,y,\eta_0) ] \\
  &= \lim_\Lambda \lim_V
    {\mathbb E}_{m^{(\gamma)}_V} [ n^{(\gamma)}_\Lambda(x,y,\eta_0) ] \\
  &\le \lim_V
    {\mathbb E}_{m^{(\gamma)}_V} [ n^{(\gamma)}_V (x,y,\eta_0) ] \\
  &= G^{(\gamma)}(x,y).
  \end{split}\end{equation}
Inequality \eqref{e:fromdhar} and the estimates \eqref{markest}, \eqref{haha}
yield ${\mathbb E}_{m^{(\gamma)}} |{\mathrm{Av}}^{(\gamma)}_x(\eta)| < \infty$,
for any $x$, implying {\em (i)}. As in
\cite[Section 4]{jr}, we have that {\em (i)} implies {\em (ii)}.
\end{proof}
\subsection{The stationary Markov process with positive dissipation and bounded addition rate}
\begin{theorem}
\label{thm:dissip-dyn}
Let $\gamma > 0$. \\
(i) The process $\{ \eta_t \}_{t \ge 0}$ is Markovian.\\
(ii) With ${\mathbb P}$-probability $1$, for any
$\eta_0 \in \Omega^{(\gamma)}$ we have
\begin{equation*}
\eta_t
  = \lim_V \prod_{x \in V}
    \left( a^{(\gamma)}_x \right)^{N^{\varphi}_{x,t}} (\eta_0).
    \end{equation*}
(iii) If $\eta_0$ is distributed according to $m^{(\gamma)}$
then $\{ \eta_t \}_{t \ge 0}$ is stationary.
\end{theorem}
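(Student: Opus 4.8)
The plan is to prove the three assertions in order, each building on the infinite-volume stabilization machinery from Lemma~\ref{lem:stabilize} and the almost-sure finiteness of avalanches from Lemma~\ref{lem:invariance}(i).

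\textbf{Part (ii).}
First I would establish the product representation, since it underpins the other two claims. By Lemma~\ref{lem:dynamics}(ii), with probability one $\eta_0 + N^\vi_t$ is $\gamma$-stabilizable for every $t$, so $\eta_t = \caS^{(\gamma)}(\eta_0 + N^\vi_t)$ is well-defined. The key point is to commute stabilization with the finitely many additions that have occurred by time $t$. Since $N^\vi_t$ has only finitely many nonzero coordinates in any bounded region and, by boundedness of $\vi$ together with the Green's function bound \eqref{haha}, the total added mass is $\gamma$-stabilizable, I would write $\eta_0 + N^\vi_t = \eta_0 + \sum_{x} N^\vi_{x,t}\,\delta_x$ and apply the infinite-volume abelian property \eqref{e:Abelianstab} repeatedly. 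Concretely, using $\caS^{(\gamma)}(\zeta_1 + \zeta_2) = \caS^{(\gamma)}(\caS^{(\gamma)}(\zeta_1) + \zeta_2)$ with $\zeta_1 = \eta_0$ and peeling off one $\delta_x$ at a time shows $\caS^{(\gamma)}(\eta_0 + N^\vi_t) = \lim_V \prod_{x \in V}(a^{(\gamma)}_x)^{N^\vi_{x,t}}(\eta_0)$, where the limit over increasing finite $V$ exists because each $a^{(\gamma)}_x$ is well-defined on $\Omega^{(\gamma)}$ (as noted after \eqref{e:numtop}) and the operators commute. The main care here is justifying that the finite products stabilize as $V \uparrow \Zd$: this follows because adding $\delta_x$ for $x$ far outside the (a.s.\ finite) region affecting a fixed coordinate produces no topplings reaching that coordinate, so each coordinate of the product is eventually constant in $V$.

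\textbf{Part (i).}
For the Markov property I would exploit the independent-increments structure of the driving Poisson processes. For $s < t$, the increment $N^\vi_t - N^\vi_s$ is independent of $\caF_s := \sigma(N^\vi_u : u \le s)$, and by the abelian property \eqref{e:Abelianstab},
\eqnst
{ \eta_t
  = \caS^{(\gamma)}( \eta_0 + N^\vi_t )
  = \caS^{(\gamma)}\big( \caS^{(\gamma)}(\eta_0 + N^\vi_s) + (N^\vi_t - N^\vi_s) \big)
  = \caS^{(\gamma)}\big( \eta_s + (N^\vi_t - N^\vi_s) \big). }
Thus $\eta_t$ is a measurable function of $\eta_s$ and the post-$s$ increment alone, and since that increment is independent of $\caF_s$ with a law depending only on $t - s$, the conditional distribution of $\eta_t$ given $\caF_s$ depends only on $\eta_s$. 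This gives the Markov property (indeed time-homogeneity). The one subtlety is that the decomposition via \eqref{e:Abelianstab} requires $\eta_0 + N^\vi_t$ to be $\gamma$-stabilizable, which holds almost surely by Lemma~\ref{lem:dynamics}(ii), and requires $\eta_s \in \Omega^{(\gamma)}$, which holds by Lemma~\ref{lem:stabilize}(iii).

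\textbf{Part (iii).}
For stationarity, take $\eta_0 \sim m^{(\gamma)}$ independent of the Poisson processes. By part (i) and homogeneity it suffices to show $\eta_t \sim m^{(\gamma)}$ for each fixed $t$. The idea is to condition on the realization of $N^\vi_t$: given the (a.s.\ finite in each coordinate, and finitely supported up to stabilizability) configuration of additions, $\eta_t$ is obtained from $\eta_0$ by applying a fixed finite product of commuting addition operators, and by Lemma~\ref{lem:invariance}(ii) each $a^{(\gamma)}_x$ preserves $m^{(\gamma)}$. Hence, conditionally on $N^\vi_t$, the law of $\eta_t$ is again $m^{(\gamma)}$; integrating out $N^\vi_t$ preserves this. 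The step requiring the most care is interchanging the conditioning with the infinite-volume limit in the product representation of part (ii): I would argue that for $m^{(\gamma)}$-a.e.\ $\eta_0$ the avalanche sets are finite (Lemma~\ref{lem:invariance}(i)), so only finitely many of the additions in $N^\vi_t$ actually influence any fixed finite set of coordinates, reducing the invariance to the finitely-many-operators case already covered by Lemma~\ref{lem:invariance}(ii). This localization is the main obstacle, since one must ensure the measure-preservation passes to the limit uniformly enough to conclude equality of the full infinite-volume laws, which I expect to handle by checking equality of all finite-dimensional marginals.
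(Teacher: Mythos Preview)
Your argument for (i) is correct and matches the paper's. For (ii) and (iii), however, there are issues.

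\textbf{Gap in (ii).} The statement asserts the product representation for \emph{every} $\eta_0 \in \Omega^{(\gamma)}$, with the ``a.s.'' referring only to the Poisson randomness. Your justification that the partial products stabilize coordinatewise invokes an ``(a.s.\ finite) region affecting a fixed coordinate'', but for an arbitrary $\eta_0$ there is no reason a single avalanche should be confined to a finite set; Lemma~\ref{lem:invariance}(i) gives this only for $m^{(\gamma)}$-typical configurations, so you cannot use it here. Moreover, even granting convergence of the partial products $\zeta_V := \prod_{x\in V}(a_x^{(\gamma)})^{N^\vi_{x,t}}(\eta_0)$, your iterated use of \eqref{e:Abelianstab} only yields $\caS^{(\gamma)}(\eta_0 + N^\vi_t) = \caS^{(\gamma)}(\zeta_V + N^\vi_t|_{V^c})$ for each finite $V$, and you have not shown that the right-hand side tends to $\lim_V \zeta_V$; continuity of $\caS^{(\gamma)}$ under this kind of limit is precisely what is at stake. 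The paper closes this gap differently: it fixes a finite window $W$, chooses $\la$ large enough that $(N^{(\gamma)}_\la\eta)_y = (N^{(\gamma)}\eta)_y$ for $y \in \overline{W}$ (possible since $\eta = \eta_0 + N^\vi_t$ is $\gamma$-stabilizable), and then for $n$ large enough that $\eta_n$ agrees with $\eta$ on $\la$ uses the sandwich
\[
(N^{(\gamma)}\eta)_y = (N^{(\gamma)}_\la\eta)_y = (N^{(\gamma)}_\la\eta_n)_y \le (N^{(\gamma)}\eta_n)_y \le (N^{(\gamma)}\eta)_y,\qquad y\in\overline W,
\]
coming from Lemma~\ref{lem:stabilize}(i),(ii). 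This forces the toppling numbers, and hence the configurations, to agree on $W$ for large $n$, with no appeal to avalanche finiteness.

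\textbf{On (iii).} Once (ii) is established, the paper's argument is one line: each finite product $\zeta_V$ has law $m^{(\gamma)}$ by Lemma~\ref{lem:invariance}(ii), and $\eta_t = \lim_V \zeta_V$ almost surely, so $\eta_t$ also has law $m^{(\gamma)}$. Your route via ``only finitely many additions influence a fixed finite set'' is unnecessary, and would again require justification beyond Lemma~\ref{lem:invariance}(i), since that lemma controls a single avalanche, not the joint effect of infinitely many additions on a given coordinate.
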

\begin{proof}
{\em (i)}
By \eqref{e:processdef}, for $0\le t\le t+s$, and $\eta_0\in \Omega^{(\gamma)}$,
$\eta_t$ and $\eta_{t+s}$ are well-defined, and
$\eta_0 + N^{\varphi}_t$, $\eta_0 + N_{t+s}^{\varphi}$ are
$\gamma$-stabilizable
by Lemma \ref{lem:dynamics} {\em (ii)}.
By Lemma \ref{lem:stabilize} {\em (iv)}, we have
\begin{equation*}
 \eta_{t+s}
  = {\mathcal S}^{(\gamma)}(\eta_0 + N^{\varphi}_{t+s})
  = {\mathcal S}^{(\gamma)}( \eta_t + N^{\varphi}_{t+s} - N^{\varphi}_t ),
    \qquad s \ge 0.
    \end{equation*}
This implies the Markov property.

{\em (ii)} Condition on a realization of the Poisson processes
such that, for all $y\in\mathbb{Z}^d$,
$\sum_{x \in \mathbb{Z}^d} N^{\varphi}_{x,t} G^{(\gamma)}(x,y) < \infty$.
Let $x_1, x_2, \dots$ be an enumeration of $\mathbb{Z}^d$, and let
\begin{equation*}\begin{split}
 \eta_n
  &= \eta_0 + \sum_{i=1}^n N^{\varphi}_{x_i,t} \delta_{x_i}; \\
  \eta
  &= \eta_0 + N^{\varphi}_t; \\
  \zeta_n
  &= {\mathcal S}^{(\gamma)}(\eta_n)
  = \prod_{i=1}^n \left(
    a^{(\gamma)}_{x_i} \right)^{N^{\varphi}_{x_i,t}} (\eta_0)\\
  \zeta
  &= {\mathcal S}^{(\gamma)}(\eta).
  \end{split}\end{equation*}
Note that $\zeta$ is indeed well-defined
by Lemma \ref{lem:dynamics}{\em (ii)}.
Given $W \subseteq \mathbb{Z}^d$ finite, select $\Lambda$ such that
$(N^{(\gamma)}_\Lambda\eta)_y = (N^{(\gamma)}\eta)_y$ for all
$y \in \overline W$.
If $n \ge n_0(\Lambda)$, we have $\eta_n = \eta$ in $\Lambda$,
and therefore
\begin{equation*}
 (N^{(\gamma)}(\eta))_y
  = (N_\Lambda^{(\gamma)}(\eta))_y
  = (N_\Lambda^{(\gamma)}(\eta_n))_y
  \le (N^{(\gamma)}(\eta_n))_y
  \le (N^{(\gamma)}(\eta))_y, \qquad y \in\overline W,
  \end{equation*}
where the first inequality is due to
Lemma \ref{lem:stabilize}{\em (ii)}, and the second to
Lemma \ref{lem:stabilize}{\em (i)}. This shows that
$(\zeta_n)_y = \zeta_y$ for $y \in W$, when
$n \ge n_0(\Lambda)$. Hence $\zeta_n \to \zeta$, proving {\em (ii)}.

{\em (iii)} Due to Lemma \ref{lem:invariance}{\em (ii)}, and part {\em (ii)},
$\eta_t$ is an almost sure limit of configurations
distributed according to $m^{(\gamma)}$. This completes
the proof.
\end{proof}
\section{Avalanche tails}\label{avtail}
\subsection{Upper and lower bounds on avalanche tails}
The next theorem gives upper and lower bounds on the
probability that a $\gamma$-avalanche started at $0$ contains a
vertex $x$ in the infinite-volume system. The upper bound
is essentially Dhar's formula.
In Theorem \ref{thm:new-av-lb} we will improve the lower bound in part (a) to
one that matches the upper bound up to a multiplicative constant when $d > 4$.
Part (b), that follows by essentially the same proof as part (a), concerns the
following modified model: let the dissipation at the origin be $0 \le \gamma_0 < 1$,
but keep the dissipation at all other vertices $\gamma$. We refer to this
model by superscripts $(\gamma_0,\gamma)$ in our notation.
\begin{theorem}
\label{thm:waves}
Let $d \ge 1$ and $0 \le \gamma < 1$.
\begin{itemize}
\item[(a)] We have
\begin{equation}\label{avesti}
 \gamma G^{(\gamma)} (0,x)
 \leq m^{(\gamma)} (x\in {\mathrm{Av}}_0^{(\gamma)} (\eta))
 \leq G^{(\gamma)} (0,x).
\end{equation}
\item[(b)] There are constants $0 < c = c(d) < C = C(d)$, such that
\begin{equation}\label{doublegamma}
 c\, \gamma_0\, G^{(\gamma)} (0,x)
 \le m^{(\gamma_0,\gamma)} (x \in \Av_0^{(\gamma_0,\gamma)} (\eta))
 \le C\, G^{(\gamma)} (0,x).
\end{equation}
\end{itemize}
\end{theorem}
\begin{proof}
The upper bound in \textit{(a)} is immediate from Dhar's formula
and Markov's inequality. 
We next prove the lower bound in \textit{(a)}.
We first work in finite volume $\Lambda$. Recall the representation of $\gamma$-waves
from Section \ref{ssec:waves}. For any $\zeta\in \widehat{\mathcal{R}}^{(\gamma)}_\Lambda\setminus\mathcal{R}^{(\gamma)}_\Lambda$
we denote by $W(\zeta)$ the set of those sites that have to be toppled stabilizing
$\zeta+\delta_0$ in order to obtain $\widehat{a}_0\zeta$, i.e., using the toppling matrix
$\widehat{\Delta}^{(\gamma)}$ from \eqref{bobob}.
Let $\widehat{B}_x \subseteq \widehat{\mathcal{R}}^{(\gamma)}_\Lambda$ denote
the set of those intermediate configurations whose wave contains $x$.
Then $\widehat{m}^{(\gamma)}_\Lambda(\widehat{B}_x)$ is the probability
that the weighted random spanning tree in $\widehat{\Lambda}$
contains the extra edge, and the component of $0$ in the forest
obtained by removing the extra edge contains $x$. By Wilson's
algorithm, this is the same as the probability that a
random walk started at $x$ first reaches $\varpi$ via the
extra edge, which can be computed as:
\begin{equation}\label{e:wavesprob}
 \widehat{m}^{(\gamma)}_\Lambda(\widehat{B}_x)
  = \frac{G^{(\gamma)}_\Lambda(0,x)}{1 + G^{(\gamma)}_\Lambda(0,0)}.
  \end{equation}
Indeed, $\widehat{m}^{(\gamma)}_\Lambda(\widehat{B}_x)$ is the probability that the random
walk hits zero starting from $x$ which is
equal to $\frac{G^{(\gamma)}_\Lambda(0,x)}{G^{(\gamma)}_\Lambda(0,0)}$ times
the probability that the random walk starting from zero reaches
$\varpi$ via the extra edge. The latter probability equals
$\widehat{m}^{(\gamma)}_\Lambda(\widehat{B}_0)$ which
is the probability that the weighted random spanning
tree contains the extra edge which
equals $\text{Vol} (\widehat{\mathcal{R}}^{(\gamma)}_\Lambda\setminus\mathcal{R}^{(\gamma)}_\Lambda)/\text{Vol}({\mathcal{R}}^{(\gamma)}_\Lambda)$.
Using
\[
\text{Vol}({\mathcal{R}}^{(\gamma)}_\Lambda)= \text{det} (\widehat{\Delta}^{(\gamma)})=
\text{det}(\widehat{\Delta}^{(\gamma)}) (1+ G^\gamma_{\Lambda}(0,0))
\]
we obtain $\widehat{m}^{(\gamma)}_\Lambda(\widehat{B}_0)= G^{(\gamma)}_\Lambda(0,0)/ (1+G^{(\gamma)}_\Lambda(0,0))$
and thus \eqref{e:wavesprob}.
Consider the set
\begin{equation*}
 C_x
  = \{ \eta \in \widehat{B}_x : 2d + 1 \le \eta_0 < 2d + 1 + \gamma \}.
  \end{equation*}
By the burning algorithm,
$\widehat{m}^{(\gamma)}_\Lambda (C_x)
= \gamma \widehat{m}^{(\gamma)}_\Lambda (\widehat{B}_x)$.

Indeed we have $\eta \in\widehat{B}_x$ if and only if after adding height one at $0$ we can
topple $0$ and then carry out a wave that topples $x$. The toppling of 0 will
be possible if and only if
\[
2d + \gamma \leq \eta_0 < 2d + \gamma + 1.
\]
Once the toppling of $0$ occurred, the height $\eta_0$ plays no role in the
condition that $x$ topples. If we condition on all the heights of $\eta$
except $\eta_0$, then the conditional probability, given $\eta \in\widehat{B}_x$,
that $\eta \in C_x$ is $\gamma$ since the additional restriction is:
\[
2d + 1 \leq \eta_0 < 2d + \gamma + 1.
\]
Also note that every $\eta \in C_x$ is necessarily an intermediate configuration after
the first $\gamma$-wave of its $\gamma$-avalanche, hence we have 
\begin{equation*}
 D_x
  := \{ \eta_1 \in \widehat{\mathcal{R}}^{(\gamma)}_\Lambda :
     \widehat{a}_0^{-1} \eta_1 \in \mathcal{R}^{(\gamma)}_\Lambda,\,
     W(\eta_1) \ni x \}
  \supset C_x.
  \end{equation*}
We have
\begin{equation*}\begin{split}
 \mathrm{Vol} (\eta \in \mathcal{R}^{(\gamma)}_\Lambda : {\mathrm{Av}}_{\Lambda,0}^{(\gamma)}(\eta) \ni x )
  &\ge \mathrm{Vol} (\eta \in \mathcal{R}^{(\gamma)}_\Lambda : W^{(1)}_\Lambda(\eta) \ni x ) \\
  &= \mathrm{Vol} (\eta \in \mathcal{R}^{(\gamma)}_\Lambda : W(\widehat{a}_0 \eta) \ni x ) \\
  &= \mathrm{Vol}(D_x)
  \ge \mathrm{Vol}(C_x)
  = \gamma \mathrm{Vol} (\widehat{B}_x),
  \end{split}\end{equation*}
where in the third step, we used invariance of Lebesgue measure
under $\widehat{a}_0$. Indeed we have
\begin{eqnarray*}
\widehat{a}_0^{-1} D_x &=& \{ \widehat{a}_0^{-1}\eta_1\in \widehat{\mathcal{R}}^{(\gamma)}_\Lambda:
\widehat{a}_0^{-1}\eta_1\in {\mathcal{R}}^{(\gamma)}_\Lambda, W(\eta_1)\ni x\}
\\
&=&
\{ \eta\in \widehat{\mathcal{R}}^{(\gamma)}_\Lambda:
\eta\in {\mathcal{R}}^{(\gamma)}_\Lambda, W(\widehat{a}_0\eta)\ni x\}
\end{eqnarray*}
 This implies
\begin{equation*}\begin{split}
 m^{(\gamma)}_\Lambda ({\mathrm{Av}}_{\Lambda,0}^{(\gamma)}(\eta) \ni x)
  &\ge \gamma \widehat{m}^{(\gamma)}_\Lambda (\widehat{B}_x)
      \frac{\mathrm{Vol}(\widehat{\mathcal{R}}^{(\gamma)}_\Lambda)}{\mathrm{Vol}(\mathcal{R}^{(\gamma)}_\Lambda)}
  = \gamma \frac{G^{(\gamma)}_\Lambda(0,x)}{1 + G^{(\gamma)}_\Lambda(0,0)}
    \left( 1 + G^{(\gamma)}_\Lambda (0,0) \right) \\
  &= \gamma G^{(\gamma)}_\Lambda (0,x).
\end{split}\end{equation*}
To pass to the limit $\Lambda \to \mathbb{Z}^d$, approximate the event
$E_x = \{ {\mathrm{Av}}_{\Lambda,0}^{(\gamma)}(\eta) \ni x \}$ by
$E_{x,V} = \{ {\mathrm{Av}}_{\Lambda,0}^{(\gamma)}(\eta) \ni x,\, {\mathrm{Av}}_{\Lambda,0}^{(\gamma)}(\eta) \subseteq V \}$
with a large finite $V$. Since both $E_{x,V}$ and
$F_V = \{ {\mathrm{Av}}_{\Lambda,0}^{(\gamma)}(\eta) \not\subseteq V \}$
are local events, we have 
\begin{equation*}\begin{split}
&| m^{(\gamma)}_\Lambda (E_x) - m^{(\gamma)} (E_x) | \\
  &\qquad \le \limsup_V \Big[
    | m^{(\gamma)}_\Lambda (E_x) - m^{(\gamma)}_\Lambda (E_{x,V}) |
    + | m^{(\gamma)}_\Lambda (E_{x,V}) - m^{(\gamma)} (E_{x,V}) |  \\
  &\qquad\quad  + | m^{(\gamma)} (E_{x,V}) -  m^{(\gamma)} (E_x) | \Big] \\
  &\qquad \le \limsup_\Lambda m^{(\gamma)}_\Lambda (F_V) + 0
    + m^{(\gamma)} (F_{V}) \\
  &\qquad \le 2 \varepsilon,
  \end{split}\end{equation*}
if $V$ is large enough.

Part \textit{(b)} follows by essentially the same argument. Observe that only the dissipation
at the origin played any significant role, and adjusting this to a different
value only changes the Green's functions and the volume of recurrent
configurations by at most a constant factor depending on $d$.
\end{proof}

\begin{remark}
\begin{itemize}
\item[a)] A natural length associated to the system with dissipation
$\gamma$ is the typical diameter of a  $\gamma$-avalanche.
By Lemma \ref{masgreen}, $G^{(\gamma)}(0,x)$ decays, up to polynomial factors,
as $e^{-|x|/L(\gamma)}$, with $L(\gamma) = C/\sqrt{\gamma}$.
Thus Theorem \ref{thm:waves} and also Theorem \ref{thm:new-av-lb} below show that $1/\sqrt{\gamma}$ is the typical
avalanche-diameter as $\gamma\to 0$, and supports the idea that
in the system with dissipation $\gamma$, the ``correlation length''
scales as $1/\sqrt{\gamma}$, as $\gamma\to 0$
(compare with \cite{japon}).
\item[b)] Inequalities \eqref{doublegamma} generalize to inhomogeneous dissipation
as follows. Let $\Gamma = (\gamma_x)_{x \in \Z^d}$, be a collection of
non-negative real dissipations, and suppose that $\gamma_0 > 0$. Note that the 
Green's function $G^{(\Gamma)}(0,x)$ of the continuous-time random walk 
trapped at rate $\gamma_x$ at each lattice site $x$ is finite (even when $d = 2$). 
In particular, when $d\geq 3$ we can choose $\gamma_0 > 0$ and $\gamma_x = 0$ elsewhere
in \eqref{doublegamma}. Then the lower bound of \eqref{doublegamma} 
decays as $C\gamma_0/|x|^{d-2}$, i.e., this model has power-law decay of avalanches, 
and infinite expected avalanche area. It is an interesting problem to characterize the set
of possible inhomogeneous dissipation functions $\Gamma$ such that the expected 
area of the avalanche is infinite (i.e., the model is still ``critical'').
\end{itemize}
\end{remark}
\subsection{The toppling probability exponent in $d > 4$}
In Theorem \ref{thm:new-av-lb} below we show that for $d > 4$ the
factor $\gamma$ on the left hand side of \eqref{avesti}
can be replaced by a constant.
This implies that the probability that a site $x$ belongs to the avalanche initiated at the origin
behaves asymptotically as $G^{(\gamma)}(0,x)$ for large $x$. By taking the limit $\gamma\to 0$ one obtains
that the probability that $x$ belongs to the avalanche initiated at $0$ in the critical model behaves as
$1/|x|^{d-2}$ for large $x$.

In analogy with the one-arm exponent (connectivity exponent)
for percolation clusters \cite{Grimmett},
let us introduce the critical exponent $\theta$ by requiring that
\eqnst
{ m^{(0)} ( x \in \Av_0^{(0)}(\eta) )
  \approx \frac{c}{|x|^{d-2 + \theta}}, \quad \text{as $|x| \to \infty$.} }
Here $\approx$ means logarithmic equivalence of the two sides
(or possibly a stronger relation).
Theorem \ref{thm:new-av-lb} below shows that
when $d > 4$, the exponent takes the mean-field value $\theta = 0$.
In this context, Dhar's formula provides the mean-field bound $\theta \ge 0$.
By analogy with other critical lattice systems, it is tempting to conjecture that
below the critical dimension, i.e.~when $d < 4$, we have $\theta > 0$, and
that $\theta = 0$ with a logarithmic correction in dimension $d = 4$.
One could heuristically argue for this conjecture as follows.
When $d > 4$, avalanche clusters are tree-like \cite{Priezzhev},
and repeated topplings are not significant. In this case the expected
number of topplings gives the correct behavior of the probability that $x$ topples.
When $d < 4$, we can expect that repeated topplings are pronounced, which implies
that if $x$ topples at least once, it is likely to topple many times in the
same avalanche. In this case the probability that $x$ topples would be
significantly smaller than the expected number of topplings, and hence
$\theta > 0$.

\begin{open}
Is the above heuristic correct and $\theta > 0$ when $d = 2, 3$?
\end{open}

\noindent
We did not find any numerical work in the
literature on the exponent $\theta$ in dimensions $d = 2, 3$.
\begin{theorem}
\label{thm:new-av-lb}
When $d > 4$, there exists $c = c(d) > 0$ such that for all $\gamma \ge 0$
we have
\eqn{e:matching-lb}
{ c G^{(\gamma)} (0,x)
  \leq m^{(\gamma)} (x \in {\mathrm{Av}}_{0}^{(\gamma)} (\eta))
  \leq G^{(\gamma)} (0,x). }
\end{theorem}
\begin{proof}
The upper bound follows from Dhar's formula, so we only need to prove
the lower bound.
 The proof is divided in several steps.\\

\noindent{\bf Step 1: Reduction to a weighted spanning tree problem}

 Let $\Lambda$ be a finite volume containing $0$.
For $\eta \in \caR^{(\gamma)}_\Lambda$, let $W^\last_\Lambda(\eta)$
denote the \emph{last wave} occurring when unit height is added
at $0$ to the configuration $\eta$.

We set
$W^\last_\Lambda(\eta) = \es$, if the avalanche is empty. We have
\eqnst
{ m^{(\gamma)}_\Lambda ( x \in \Av^{(\gamma)}_{0,\Lambda}(\eta) )
  \ge m^{(\gamma)}_\Lambda ( x \in W^\last_\Lambda ). }
Recall the correspondence between waves and intermediate configurations
introduced in Section \ref{ssec:waves}.
We define a map $F : \caR^{(\gamma)}_\Lambda \to \widehat{\caR}^{(\gamma)}_\Lambda$
as follows. When the avalanche is empty, that is if
$\eta + \delta_0 \in \caR^{(\gamma)}_\Lambda$, we set
$F(\eta) = \eta + \delta_0$.
When the avalanche is non-empty, that is, if
$\eta + \delta_0 \in \widehat{\caR}^{(\gamma)}_\Lambda \setminus \caR^{(\gamma)}_\Lambda$,
we let $F(\eta)$ be the last intermediate configuration in the
avalanche, i.e., the configuration corresponding to the last wave. The mapping $F$ is one-to-one between $\caR^{(\gamma)}_\Lambda$ and
its image $F(\caR^{(\gamma)}_\Lambda)$,
and preserves Lebesgue measure between these sets. Indeed, we can subdivide
$\caR^{(\gamma)}_\Lambda$ in finitely many disjoint pieces such that on each piece
$F$ is a composition of translations (individual topplings act as translations).

For an intermediate configuration
$\zeta \in \widehat{\caR}^{(\gamma)}_\Lambda \setminus
\caR^{(\gamma)}_\Lambda$, let
$W(\zeta)$ be the wave corresponding to it.
Denote
\be\label{evector}
e := (-1,0,\dots,0) \in \Z^d.
\ee
We have that if $e \not\in W(\zeta)$, then
$(\hat{a}_0 \zeta)_0 < 2d + \gamma$, and hence $W(\zeta)$ is a last wave
(cf.\ also \cite{IKP94}).
Hence we have
\eqnspl{e:wave-lb}
{ m^{(\gamma)}_\Lambda ( x \in W^\last_\Lambda )
  &= \frac{\Vol \big( \big\{ \eta \in \caR^{(\gamma)}_\Lambda :
    x \in W^\last(\eta) \big\} \big)}{\Vol(\caR^{(\gamma)}_\Lambda)} \\
  &= \frac{\Vol \big( \big\{ \zeta \in F(\caR^{(\gamma)}_\Lambda) :
    x \in W(\zeta) \big\} \big)}{\Vol(\caR^{(\gamma)}_\Lambda)} \\
  &\ge \frac{\Vol \big( \big\{ \zeta \in \widehat{\caR}^{(\gamma)}_\Lambda \setminus \caR^{(\gamma)}_\Lambda :
    x \in W(\zeta),\, e \not\in W(\zeta) \big\} \big)}{\Vol(\caR^{(\gamma)}_\Lambda)} \\
  &\ge \frac{\Vol \big( \big\{ \zeta \in \widehat{\caR}^{(\gamma)}_\Lambda \setminus \caR^{(\gamma)}_\Lambda :
    x \in W(\zeta),\, e \not\in W(\zeta) \big\} \big)}{\Vol(\widehat{\caR}^{(\gamma)}_\Lambda)} \\
  &=: \widehat{m}^{(\gamma)}_\Lambda( E_x ). }
The event $E_x$ consists of all $\zeta \in \widehat{\caR}^{(\gamma)}_\Lambda$ such that
$2d + \gamma \le \zeta_0 < 2d + \gamma + 1$, $x \in W(\zeta)$, and $e \not \in W(\zeta)$.
Observe that $E_x$ is a union of $(\gamma,\Lambda)$-cells (here we use
a natural extension of the notion of $(\gamma,\Lambda)$-cell to
$\widehat{\caR}^{(\gamma)}_\Lambda$). Therefore, letting
$\widehat{\mu}^{(\gamma)}_\Lambda$ denote the weighted spanning tree
measure for the graph $\widehat{\Lambda}$, we have
\eqnst
{ \widehat{m}^{(\gamma)}_\Lambda( E_x )
  = \widehat{\mu}^{(\gamma)}_\Lambda ( \text{$x$ connects to $\varpi$ via the extra edge,
  and $e$ does not} ). }
We pass to the infinite volume limit (just as in the proof of Theorem \ref{thm:waves})
and get
\eqn{e:wave-tree}
{ m^{(\gamma)} ( x \in \Av^{(\gamma)}_{0}(\eta) )
  \ge \widehat{\mu}^{(\gamma)} ( \text{$x$ connects to $\varpi$ via the extra edge,
  and $e$ does not} ). }

\noindent{\bf Step 2: Reduction to a problem of two independent random walk paths}

We now use Wilson's algorithm in $\widehat{\Z^d}$ to give a lower bound on the probability
in the right hand side of \eqref{e:wave-tree}. Roughly speaking, the required event will occur,
if a network random walk started from $x$ hits $0$, then steps to $\varpi$, and
an independent network random walk started at $e$ avoids the loop-erasure
of the first path until it hits $\varpi$. Since in $d > 4$, the probability
that independent simple random walks do not intersect is bounded away from $0$,
the last event ``should not matter'', and the first two events will occur with a probability
$c G^{(\gamma)}_\Lambda(x,0)$.

In order to make the above precise, let $S$ and $S^-$ be independent
simple random walks in $\Z^d$, with $S(0) = x$ and $S^-(0) = e$. Let $T$ and
$T^-$ be independent $\mathrm{Geom}(\gamma/(2d + \gamma))$ random variables
independent of the walks. Write
\eqnsplst
{ \tau_A
  &= \inf \{ n \ge 0 : S \in A \}; \\
  \tau^-_A
  &= \inf \{ n \ge 0 : S^- \in A \} }
Write $\pi = \LE( S[0,T \wedge \tau_0 ] )$.
\eqnsplst
{ &\widehat{\mu}^{(\gamma)} ( \text{$x$ connects to $\varpi$ via the extra edge,
  and $e$ does not} ) \\
  &\qquad \ge \frac{1}{2d + \gamma + 1}
    \Pr \left( \text{$\tau_0 < T$
    and $S^-[0,T^-] \cap \pi = \es$} \right). }
Let $S^+$ be a simple random walk on $\Z^d$ with $S^+(0) = 0$,
independent of $S^-$, and let $T^+$ be its
$\mathrm{Geom}(\gamma/(2d + \gamma))$ killing time. Let
\eqnsplst
{ \tau^+_A
  &= \inf \{ n \ge 0 : S^+ \in A \} \\
  \sigma
  &= \sup \{ 0 \le n < \tau_0 : S(n) = x \} \\
  \sigma^+
  &= \sup \{ 0 \le n < \tau^+_x : S^+(n) = 0 \}. }
Conditioned on the event $\tau_0 < T$,
$\pi = \LE ( S[0,T \wedge \tau_0 ] ) = \LE ( S[\sigma,\tau_0] )$.
Due to reversibility of simple random walk and \cite[Lemma 7.2.1]{Lawler},
the latter path has the same distribution as
$\pi^+ := \LE ( S^+[\sigma^+,\tau^+_x] ) = \LE ( S^+[0,\tau^+_x] )$
conditioned on $\tau^+_x < T^+$. Therefore,
\eqnspl{e:non-intersect}
{ &\Pr \left( \text{$\tau_0 < T$
    and $S^-[0,T^-] \cap \pi = \es$} \right) \\
  &\qquad = \Pr \left( \text{$\tau^+_x < T^+$
    and $S^-[0,T^-] \cap \pi^+ = \es$} \right) \\
  &\qquad \ge \Pr \left( \text{$\tau^+_x < T^+$
    and $S^-[0,T^-] \cap S^+[0,\tau^+_x] = \es$} \right). }
Since $\Pr ( \tau^+_x < T^+ ) = \frac{G^{(\gamma)}(0,x)}{G^{(\gamma)}(0,0)}$,
we are left to show that
\eqn{e:to-show}
{ \Pr \left( \text{$\tau^+_x < T^+$
    and $S^-[0,T^-] \cap S^+[0,\tau^+_x] = \es$} \right)
  \ge c \Pr ( \tau^+_x < T^+ ). }

Notice that the event that two independent random walk paths do not meet
has positive probability when $d > 4$. However we are asking that the first path $S^+$
hits $x$, and therefore, if $x$ is large ($|x|>\gamma^{-1/2}$) the random walk survives
anomalously long, and hence it is not clear whether it behaves as an ``ordinary''
random walk. In particular this will be reflected in the proof of \eqref{e:to-show},
where a distinction between
$|x| > \gamma^{-1/2}$ and $|x| \le \gamma^{-1/2}$  has to be made.

For $\alpha \ge 0$ define the cones
\eqnsplst
{ \caH^+_\alpha
  &= \{ y \in \Z^d :
     \text{$y_1 \ge \alpha |y_j|$ for $j = 2, \dots, d$} \} \\
  \caH^-_\alpha
  &= \{ y \in \Z^d :
     \text{$y_1 < -\alpha |y_j|$ for $j = 2, \dots, d$} \}. }
We abbreviate $\caH^+ = \caH^+_1$ and $\caH^- = \caH^-_1$.
We define the events
\eqnsplst
{ H(R)
  &= \{ \text{$S^+[0,\tau^+_{B(R)^c}]
    \cap S^-[0,\tau^-_{B(R)^c}] = \es$} \} \\
  G^+(r,R)
  &= \{ \text{$S^+[\tau^+_{B(r)^c},\tau^+_{B(R)^c}] \subset \caH^+$} \} \\
  G^-(r,R)
  &= \{ \text{$S^-[\tau^-_{B(r)^c},\tau^-_{B(R)^c}] \subset \caH^-$} \}. }
In showing \eqref{e:to-show},
we assume,
without loss of
generality, that $x = (x_1, \dots, x_d)$ satisfies
\be\label{ordercoordinates}
x_1 \ge |x_j|,\  j = 2, \dots, d,
\ee

\noindent{\bf Step 3: proof of \eqref{e:to-show} for ``small'' $x$: $|x| \le \gamma^{-1/2}$.}

 Note that by transience of $S^-$ we obtain \eqref{e:to-show}
for all $x$ such that $|x| \le k_0$ for any fixed $k_0 > 0$.
So we will assume that $k_0 < |x| \le \gamma^{-1/2}$, and will choose
$k_0$ suitably in course of the proof. Define the event:
\eqnsplst
{ A
  := H(|x|/2) \cap G^+(|x|/4,|x|/2) \cap G^-(|x|/4,2|x|). }
We show that $\Pr (A) \ge c_1 > 0$.
{}From the invariance principle we derive that for all $k$ large enough and $|x|>8k$  we obtain
\eqnsplst
{ \Pr ( S^+(\tau^+_{B(|x|/4)^c}) \in \caH^+_{2} \,|\, S^+(0) = (k,0,\dots,0) )
  &\ge c \\
  \Pr ( S^-(\tau^-_{B(|x|/4)^c}) \in \caH^-_{2} \,|\, S^-(0) = (-k,0,\dots,0) )
  &\ge c. }
Hence, using the invariance principle, there exists a constant $c' > 0$
such that for all $x$  with large enough $|x|$, say $|x|> 8k$
\eqnsplst
{ \Pr ( G^+(|x|/4,|x|/2) \,|\, S^+(0) = (k,0,\dots,0) )
  &\ge c' \\
  \Pr ( G^-(|x|/4,2|x|) \,|\, S^-(0) = (-k,0,\dots,0) )
  &\ge c'. }
When $d > 4$, the probability that two independent simple
random walks starting at distance $2 k$ apart
intersect is $o(1)$ as $k \to \infty$
(see e.g.~\cite[Section 3.3]{Lawler}). Therefore we choose $k$
large enough so that
\eqnst
{ \Pr (S^+[0,\infty) \cap S^-[0,\infty) \not= \es \,|\,
      S^+(0) = (k,0,\dots,0),\, S^-(0) = (-k,0,\dots,0) )
  < c'/2, }
and set $k_0 = 8 k$. Requiring the occurrence of the event
\eqnst
{ \{ S^+(\ell) = (\ell,0,\dots,0),\, \ell = 0,\dots,k \}
  \cap \{ S^-(\ell) = (-\ell-1,0,\dots,0),\, \ell = 0,\dots,k-1 \} }
we have $\Pr(A) \ge c_1$.

Let
\eqnst
{ B
  := \{ S^-[\tau^-_{B(2|x|)^c},\infty) \cap B(3|x|/2) = \es \}. }
By the strong Markov property, transience and the invariance principle, there
exists $c_2 > 0$ such that $\Pr(A \cap B) \ge c_2$.

Let $\caD = (B(3|x|/2) \setminus B(|x|/2)) \cap \caH^+_0$. Let
\eqnst
{ C
  := \{ \text{$S^+[\tau^+_{B(|x|/2)^c},\infty)$ hits $x$ before
     exiting $\caD$} \}. }
For any $z \in \partial B(|x|/2)$ we have
\eqnsplst
{ \Pr (C \,|\, S^+(0) = z )
  &\ge \Pr ( \tau^+_{B(x,|x|/2\sqrt{d})} < \tau^+_{\caD^c} \,|\, S^+(0) = z ) \\
  &\qquad \times \min_{w \in \partial B(x,|x|/2\sqrt{d})}
      \Pr ( \tau^+_x < \tau^+_{B(x,|x|/\sqrt{d})^c} \,|\, S^+(0) = w ) \\
  &\ge c \frac{c'}{|x|^{d-2}}, }
where the second inequality follows e.g.~from \cite[Proposition 1.5.10]{Lawler}.
By the strong Markov property, we get $\Pr ( A \cap B \cap C ) \ge c_3 |x|^{d-2}$.

{}From the invariance principle we can deduce that for
all $x$ and for all $K \ge 1$ we have
\eqnst
{ \Pr ( \tau^+_x < \tau^+_{B(3|x|/2)^c},\, \tau^+_{B(2|x|)^c} \ge K |x|^2 )
  \le C_1 |x|^{2-d} e^{-c_4 K}. }
Hence for a sufficiently large $K_0$ (whose value only depends on $C_1$ and
$c_4$, and not on $x$, we have
\eqnst
{ \Pr (A \cap B \cap C \cap \{ \tau^+_{B(3|x|/2)^c} \le K_0 |x|^2 \} )
  \ge (c_3/2) |x|^{2-d}. }
Finally, note that due to $|x| \le \gamma^{-1/2}$, we have
$\Pr (T^+ \ge  K_0 |x|^2) \ge c_5 = c_5(K_0)$. We conclude the proof
by observing that on the event
\eqnst
{ A \cap B \cap C \cap \{ \tau^+_{B(3|x|/2)^c} \le K_0 |x|^2 \}
    \cap \{ T^+ > K_0 |x|^2 \} }
the required event in the left hand side of \eqref{e:to-show} occurs.\\

\noindent{\bf Step 4: Proof of \eqref{e:to-show} for``large $x$'': $|x| > \gamma^{-1/2}$}.

Let $\ell = \gamma^{-1/2}$. We need the following lemma.
\begin{lemma}
\label{lem:boundary-estimate}
There exist constants $0 < c_1 = c_1(d) < C_1 = C_1(d)$ such that for all
$y \in B(\ell/8)$ and $z \in \partial B(\ell)$ we have
\eqnst
{ \frac{c}{\ell^{d-1}}
  \le \Pr^y ( S(\tau_{B(\ell)^c}) = z,\, T > \tau_{B(\ell)^c} )
  \le \frac{C}{\ell^{d-1}}. }
\end{lemma}
\begin{proof}
Write $B = B(\ell)$. Were the event $T > \tau_{B^c}$ not present, the statement would be
\cite[Lemma 1.7.4]{Lawler}. With the event $T > \tau_{B^c}$, clearly the upper
bound still holds. In order to deduce the lower bound, it is
sufficient to show that uniformly in $y$ and $z$, the random walk
started at $y$ and conditioned on exiting $B$ at $z$ has
expected exit time at most $C' \ell^2$. Indeed, then
\eqnsplst
{ \Pr^y ( T > \tau_{B^c} \,|\, S(\tau_{B^c}) = z )
  &\ge \Pr^y ( T > 2 C' \ell^2,\, 2 C' \ell^2 \ge \tau_{B^c} \,|\,
       S(\tau_{B^c}) = z ) \\
  &\ge \left( \frac{2d}{2d + \gamma} \right)^{2 C' \ell^2} \frac{1}{2} \\
  &\ge c'. }
The conditional expected exit time can be written in the form
\eqnst
{ \sum_{x \in B} \frac{h(x)}{h(y)} G_B(y,x), }
where $h(x) = \Pr^x ( S(\tau_{B^c}) = z )$. Consider first the case
$y = 0$. We can decompose the sum and bound it above as follows:
\eqnsplst
{ &\frac{1}{h(0)} \sum_{k = 0}^\ell \sum_{k \le |x| < k+1} h(x) G_B(0,x) \\
  &\qquad \le \frac{1}{h(0)} \sum_{k = 0}^\ell \sum_{k \le |x| < k+1}
      \frac{1}{c/k^{d-1}} \Pr^0 ( S(\tau_{B(k)^c}) = x) ) h(x) G(0,x) \\
  &\qquad \le \frac{1}{h(0)} \sum_{k = 0}^\ell C k^{d-1} \sum_{k \le |x| < k+1}
      \Pr^0 ( S(\tau_{B(k)^c}) = x) h(x) k^{2-d} \\
  &\qquad = \frac{1}{h(0)} \sum_{k = 0}^\ell C k h(0) \\
  &\qquad = C' \ell^2. }
For general $y \in B(\ell/8)$, we can repeat the above argument to
bound the sum over $x : 0 \le |x - y| < \ell/2$ by $C' \ell^2$.
Due to the Harnack principle, we can also bound the sum over
$x : (3/8)\ell \le |x| \le \ell$ above by
\eqnsplst
{ \frac{1}{h(y)} \sum_{k = (3/8) \ell}^\ell \ \sum_{k \le |x| < k+1} h(x) G_B(y,x)
  &\le \frac{C}{h(0)} \sum_{k = (3/8) \ell}^\ell \ \sum_{k \le |x| < k+1} h(x) G_B(0,x) \\
  &\le C' \ell^2. }
The two ranges of $x$'s cover all of $B$, so the claim follows.
\end{proof}
Let us now fix $\delta > 0$ in such a way that
for $y \in \partial B(\ell/16) \cap \caH^+$ we have
\eqnst
{ \Pr^y ( \tau_{B(\delta \ell)} < \infty )
  \le \frac{c_1}{2 C_1}, }
where $c_1$ and $C_1$ are the constants in Lemma \ref{lem:boundary-estimate}.
This is possible, as long as $\ell$ is sufficiently large, since
the probability scales as $\delta^{d-2}$. Taking $\ell$ sufficiently
large requires us to restrict to $\gamma \le \text{ some } \gamma_0$.
We will comment on the case $\gamma > \gamma_0$ at the end.

Define the event
\eqnst
{ A
  := H(\delta \ell)
     \cap G^+(\delta \ell/2, \delta \ell)
     \cap G^-(\delta \ell/2, \delta \ell), }
that satisfies $\Pr (A) \ge c_2$. For a sufficiently large $K_0$ we have
$\Pr ( \tau^-_{B(\delta \ell)^c} < \frac{1}{K_0} \ell^2 ) < c_2 / 4$
and $\Pr ( \tau^+_{B(\delta \ell)^c} > K_0 \ell^2 ) < c_2 / 4$. Hence
with some $c_3 = c_3(K_0) > 0$ we have
\eqn{e:A-lb}
{ \Pr ( A \cap \{ T^- < \tau^-_{B(\delta \ell)^c} \} \cap \{ T^+ > \tau^-_{B(\delta \ell)^c} \} )
  \ge \frac{c_2}{2} c_3. }
Denote by $\caD = B(\delta \ell/2) \cup (\caH^- \cap B(\delta \ell))$.
Given $z \in \partial B(\delta \ell) \cap \caH^+$, define the event
\eqnst
{ C_z
  := \{ S^+(0) = z,\, \tau^+_x < \tau^+_\caD \wedge T^+ \}. }
We show that
\eqn{e:worst-case}
{ \inf_{z \in \partial B(\delta \ell) \cap \caH^+} \Pr^z (C_z)
  \ge c \Pr^0 ( \tau^+_x < T^+ ), }
which implies the claim \eqref{e:to-show} in light of \eqref{e:A-lb} and
the strong Markov property of $S^+$.

First, it is clear from the invariance principle and $\gamma \asymp 1/\ell^2$ that
the killed random walk started at $z \in \partial B(\delta \ell) \cap \caH^+$
has probability bounded away from $0$ to reach $\partial B(\ell/16) \cap \caH^+$
without hitting $\caD$. Hence \eqref{e:worst-case} will be proved once we show
\eqn{e:worst-case2}
{ \inf_{y \in \partial B(\ell/16) \cap \caH^+} \Pr^y (C_y)
  \ge c \Pr^0 ( \tau^+_x < T^+ ). }
Using Lemma \ref{lem:boundary-estimate} we write
\eqnspl{e:upper-bound-tau+x}
{ \Pr^0 (\tau^+_x < T^+)
  &= \sum_{z \in \partial B(\ell)} \Pr^0 ( S^+(\tau^+_{B(\ell)^c}) = z,\,
    T^+ > \tau^+_{B(\ell)^c} ) \Pr^z ( \tau^+_x < T^+ ) \\
  &\le \frac{C_1}{\ell^{d-1}} \sum_{z \in \partial B(\ell)} \Pr^z ( \tau^+_x < T^+ ). }
On the other hand, for $y \in \partial B(\ell/16) \cap \caH^+$ we have
\eqnsplst
{ &\Pr^y (\tau^+_x < \tau^+_\caD \wedge T^+ ) \\
  &\qquad \ge \sum_{z \in \partial B(\ell)} \Pr^y ( S(\tau^+_{B(\ell)^c}) = z,\, T^+ > \tau^+_{B(\ell)^c} )
      \Pr^z ( \tau^+_x < T^+ ) \\
  &\qquad\qquad - \Pr^y ( \tau^+_\caD < \infty ) \max_{y' \in \caD}
      \sum_{z \in \partial B(\ell)} \Pr^{y'} ( S(\tau^+_{B(\ell)^c}) = z,\, T^+ > \tau^+_{B(\ell)^c} )
      \Pr^z ( \tau^+_x < T^+ ) \\
  &\qquad \ge \frac{c_1}{\ell^{d-1}} \sum_{z \in \partial B(\ell)} \Pr^z ( \tau^+_x < T^+ )
      - \frac{c_1}{2 C_1} \frac{C_1}{\ell^{d-1}}
      \sum_{z \in \partial B(\ell)} \Pr^z ( \tau^+_x < T^+ ) \\
  &\qquad = \frac{c_1}{2 \ell^{d-1}} \sum_{z \in \partial B(\ell)} \Pr^z ( \tau^+_x < T^+ ). }
Together with \eqref{e:upper-bound-tau+x} this yields the
required statement \eqref{e:worst-case2}, and completes the proof
in the case $0 < \gamma \le \gamma_0$.

When $\gamma_0 < \gamma < 1$, the statement of the theorem is implied by \eqref{avesti}.
When $\gamma \ge 1$, observe that every avalanche consists of only one wave,
so $m^{(\gamma)}( x \in \Av^{(\gamma)}_0(\eta) ) = G^{(\gamma)}(0,x)$.
\end{proof}

\section{Zero dissipation limit of the stationary processes}
\label{sec:zero-dissip-lim}
We now consider the infinite volume dynamics in the case $\gamma = 0$,  and show that
it is the limit of the infinite volume dynamics with dissipation $\gamma$ 
when $\gamma\downarrow 0$.

For the   abelian sandpile
model, this infinite volume dynamics  was constructed in \cite{jr},
in dimensions $d \ge 3$. We recall the main steps of this construction, and
 indicate  how it applies to the  abelian avalanche
model.

Recall Remark \ref{rem:uniform}, and note that when $\gamma = 0$,
the dynamics preserves the fractional part of each coordinate.
\bl
\begin{itemize}
\item[(i)] For $m^{(0)}$-a.e.~$\eta_0$, the configuration
$\eta_0 + \delta_x$ is $0$-stabilizable for all $x \in \mathbb{Z}^d$.
\item[(ii)] For all $x \in \mathbb{Z}^d$, $a^{(0)}_x$ leaves $m^{(0)}$ invariant.
\item[(iii)] Assume that ${\varphi}$ satisfies
$\sum_{x \in \mathbb{Z}^d} {\varphi}(x) G^{(0)}(x,0) < \infty$. Then
$m^{(0)} \otimes {\mathbb P}$-a.s., the limit
\begin{equation*}
 \eta_t
  = \lim_V \prod_{x \in V}
    \left( a^{(0)}_x \right)^{N^{\varphi}_{x,t}} \eta_0
    \end{equation*}
exists, and equals ${\mathcal S}^{(0)} ( \eta_0 + N^{\varphi}_t )$.
\end{itemize}
\el
\bpr
\begin{itemize}
\item[\textit{(i)}]
This is because $\eta_0 + \delta_x$ is $0$-stabilizable if and only if
its image under $\psi$ is stabilizable. It was shown in
\cite{jr} that for $\nu^{(0)}$-a.e.~configuration $\xi$,
for all $x \in \mathbb{Z}^d$, $\xi + \delta_x$ is stabilizable.
This implies {\em (i)}.
\item[\textit{(ii)}]
It was shown in \cite{jr} that the discrete addition
operators leave $\nu^{(0)}$ invariant. Denoting the
discrete stabilization operator by ${\mathcal S}^{\mathrm{discr}}$, we have
\begin{equation}\label{e:psidiscr}
 \psi({\mathcal S}^{(0)}(\eta + \delta_x))
  = {\mathcal S}^{\mathrm{discr}}(\psi(\eta) + \delta_x),
  \end{equation}
which implies {\em (ii)}.
\item[\textit{(iii)}]
Again, this was shown in the discrete case,
and \eqref{e:psidiscr} implies it for the continuous case,
  that is the abelian avalanche model.
\end{itemize}
\epr
{}From now on, we denote $m:= m^{(0)}$, $G(\cdot,\cdot):= G^{(0)}(\cdot,\cdot)$,
$a_x:= a_x^{(0)}$ and ${\mathrm{Av}}_x (\eta):= {\mathrm{Av}}^{(0)}_x (\eta)$.

Let ${\varphi}$ be an addition rate such that
\begin{equation}\label{kolibri}
 \sum_{x\in \mathbb{Z}^d} {\varphi}(x) G(x,y) <\infty
\end{equation}
for all $y\in\mathbb{Z}^d$.
Let $\eta^{(\gamma)}_{t}$ denote the stationary process
obtained when starting from $\eta^{(\gamma)}_0:=\eta^{(\gamma)}$
distributed according to $m^{(\gamma)}$, making additions
according to independent Poisson processes with
rate ${\varphi}(x)$ at $x\in \mathbb{Z}^d$, and
stabilizing with dissipation $\gamma$. Similarly, let
$\eta_t$ denote the process starting from $\eta_0=\eta$
distributed according to $m$, making additions
according to independent Poisson processes with
rate ${\varphi}(x)$ at $x\in \mathbb{Z}^d$, and
stabilizing without dissipation, i.e., with $\gamma=0$.

We write $D[0,1]$ for the space of c\`adl\`ag functions
in ${\mathcal X}$ endowed with the Skorokhod topology.
\begin{theorem}
\label{thm:converge}
When $d \geq 3$, and condition \eqref{kolibri} is satisfied,
the process $\eta^{(\gamma)}_{t}$ converges weakly in $D[0,1]$
to $\eta_t$.
\end{theorem}
\subsection{ Convergence of the addition operator}
We need the following two lemmas.
\begin{lemma}
\label{lem:avalcont}
Suppose that $0 \le \gamma' < \gamma$. Then \\
(i) $N^{(\gamma)}(\eta) \le N^{(\gamma')}(\eta)$;\\
(ii) ${\mathrm{Av}}_0^{(\gamma)}(\eta) \subseteq {\mathrm{Av}}_0(\eta)$.
\end{lemma}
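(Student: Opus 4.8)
The plan is to compare the two stabilizations by exploiting the monotonicity machinery already established, together with the fact that decreasing $\gamma$ decreases the diagonal entries of the toppling matrix while leaving the off-diagonal entries untouched (Remark \ref{rem:FSC}). The key observation is that a site $x$ is $\gamma$-unstable as soon as $\eta_x \ge 2d + \gamma$, so a smaller dissipation $\gamma' < \gamma$ makes the instability threshold \emph{lower}: any site that is $\gamma$-unstable is automatically $\gamma'$-unstable. Thus every $\gamma$-legal toppling sequence is also $\gamma'$-legal when read against the same configuration. The difficulty is that the two dynamics do not topple by the same amount --- a $\gamma$-toppling removes $2d+\gamma$ from the toppled site whereas a $\gamma'$-toppling removes only $2d+\gamma'$, so the configurations drift apart after the very first toppling and one cannot naively run the two sequences in lockstep.

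To handle this I would work at the level of finite volumes and pass to the limit via Lemma \ref{lem:stabilize}(ii), which gives $N^{(\gamma)}(\eta)_x = \sup_\la N^{(\gamma)}_\la(\eta)_x$. So it suffices to prove, for each fixed finite $\la$, the finite-volume inequality
\eqnst
{ N^{(\gamma)}_\la(\eta)_x \le N^{(\gamma')}_\la(\eta)_x, \qquad x \in \la. }
For the finite-volume statement I would use the abelian/least-action principle for stabilization: if $(x_1,\dots,x_n)$ is a $\gamma$-legal sequence that stabilizes $\eta$ in $\la$ with dissipation $\gamma$, I claim the same sequence is $\gamma'$-legal, i.e.\ every toppling in it remains legal when we instead use dissipation $\gamma'$. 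This is where the threshold comparison does the work: at each step the $\gamma'$-configuration dominates the $\gamma$-configuration coordinatewise (each $\gamma'$-toppling returns more height to the toppled site than the corresponding $\gamma$-toppling, and off-diagonal transfers are identical), so the site being toppled, which is $\gamma$-unstable hence has height $\ge 2d+\gamma > 2d+\gamma'$ in the $\gamma$-process, has at least that height in the $\gamma'$-process and is therefore $\gamma'$-unstable. Carrying this domination forward by induction on the toppling index shows $(x_1,\dots,x_n)$ is a $\gamma'$-legal sequence of topplings for $\eta$. Since it need not \emph{stabilize} $\eta$ in the $\gamma'$-dynamics (fewer height is dissipated, so more topplings may still be required), it is merely an initial legal segment; by the remark following Definition \ref{exhau} that each site topples at least as many times in an exhaustive sequence as in any legal sequence, we conclude $N^{(\gamma)}_\la(\eta)_x \le N^{(\gamma')}_\la(\eta)_x$. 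Taking the supremum over $\la$ and invoking Lemma \ref{lem:stabilize}(ii) gives part (i).

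Part (ii) is then immediate from part (i): by definition $\Av^{(\gamma)}_0(\eta) = \{ y : n^{(\gamma)}(0,y,\eta) \ge 1\}$ records the sites that topple at least once in stabilizing $\eta + \delta_0$ with dissipation $\gamma$, and since the pointwise toppling numbers only increase as $\gamma$ decreases, any site that topples under dissipation $\gamma$ also topples under dissipation $0$; hence $\Av^{(\gamma)}_0(\eta) \subset \Av_0(\eta)$. The main obstacle is the inductive height-domination step in the finite-volume argument: one must be careful that the domination $\gamma'$-configuration $\ge$ $\gamma$-configuration is preserved under each legal toppling despite the mismatched diagonal subtractions, and that this domination is exactly what guarantees preservation of legality. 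Everything else is a routine passage to the infinite-volume limit.
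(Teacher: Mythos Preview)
Your proposal is correct and follows essentially the same approach as the paper: show that any $\gamma$-legal toppling sequence for $\eta$ is also $\gamma'$-legal, then invoke the least-action principle to conclude $N^{(\gamma)} \le N^{(\gamma')}$, with (ii) an immediate corollary. The only cosmetic differences are that the paper runs the argument directly in infinite volume with an exhaustive sequence (rather than reducing to finite $\la$ first), and phrases the height-domination step as ``after each $\gamma$-toppling, add height $\gamma-\gamma'$ at the toppled site; this has exactly the effect of a $\gamma'$-toppling, and the added height does not affect $\gamma$-legality of the remaining sequence'' --- which is precisely your coordinatewise induction in disguise.
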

\begin{proof}
{\em (i)} Consider a $\gamma$-exhaustive sequence
$y_1, y_2, \dots$ of $\gamma$-legal $\gamma$-topplings
for $\eta$. We show that the
same sequence is $\gamma'$-legal for $\eta$.
Since $y_1$ is $\gamma$-unstable in $\eta$,
it is also $\gamma'$-unstable. After its
$\gamma$-toppling let us add height
$\gamma - \gamma' > 0$ at $y_1$. This has the
same effect as if we performed a
$\gamma'$-toppling, and the added extra height
does not affect $\gamma$-legality of the sequence.
Adding similarly after each $\gamma$-toppling
shows that $y_1, y_2, \dots$ is $\gamma'$-legal,
and {\em (i)} follows by the remarks preceding
Lemma \ref{lem:stabilize}.

{\em (ii)} This follows immediately from {\em (i)}.
\end{proof}
\begin{lemma}
\label{lem:gammaconv}
Suppose that $\gamma_n \downarrow 0$. Then
for $m$-a.e.~$\eta$,
\begin{equation}
\label{brom}
  \lim_{n \to \infty} a^{(\gamma_n)}_0 (\eta)
  = a^{(0)}_0 (\eta).
\end{equation}
\end{lemma}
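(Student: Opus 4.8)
The plan is to reduce everything to the toppling-number identity $\caS^{(\gamma)}(\zeta)=\zeta-\Delta^{(\gamma)}N^{(\gamma)}\zeta$ applied to $\zeta=\eta+\delta_0$, and to show that for $m$-a.e.~$\eta$ the toppling numbers $N^{(\gamma_n)}(\eta+\delta_0)_y$ are in fact \emph{eventually} (in $n$) equal to $N^{(0)}(\eta+\delta_0)_y$ at every site $y$. First I would isolate the relevant full-measure set of configurations: by the representation \eqref{e:unif} with $\gamma=0$ each coordinate $\eta_y$ has a density, so $\eta_y\notin\Z$ for $m$-a.e.~$\eta$, and summing over the countably many sites, $m$-a.e.~$\eta$ has \emph{all} heights non-integer; moreover, by the $\gamma=0$, $d\ge 3$ stabilizability result, $\eta+\delta_0$ is $0$-stabilizable, i.e.\ $N^{(0)}(\eta+\delta_0)_y<\infty$ for all $y$. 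Fix such an $\eta$.

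The core is a finite-volume perturbation claim: for every finite $\la$ there is $\gamma^*=\gamma^*(\la,\eta)>0$ with $N^{(\gamma)}_\la(\eta+\delta_0)=N^{(0)}_\la(\eta+\delta_0)$ for all $0<\gamma<\gamma^*$. To prove it I would take a $0$-legal stabilizing sequence for $\eta+\delta_0$ in $\la$ and run the \emph{same} sequence with dissipation $\gamma$. Since every height along the $\gamma=0$ trajectory lies in $\eta+\Z$ and is therefore non-integer, each site, when toppled, exceeds its threshold $2d$ by a gap bounded below by some $g_{\min}>0$ (there are finitely many topplings, so the minimum is attained and strictly positive). Comparing the two trajectories step by step, just before its $k$-th toppling a site's height under dissipation $\gamma$ is lower than under $\gamma=0$ by exactly $\gamma(k-1)$, because neighbour contributions are identical and only the site's own topplings remove the extra $\gamma$; hence $\gamma$-legality of that toppling requires only $g_z\ge\gamma k$, which holds once $\gamma<g_{\min}/N_{\max}$ with $N_{\max}=\max_z N^{(0)}_\la(\eta+\delta_0)_z$. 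The final configuration under dissipation equals the $0$-stable one lowered coordinatewise by $\gamma N^{(0)}_\la$, hence stays $<2d<2d+\gamma$, so the sequence is $(\la,\gamma)$-stabilizing. As toppling numbers are independent of the stabilizing sequence, $N^{(\gamma)}_\la=N^{(0)}_\la$.

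To pass to infinite volume, fix a site $y$. Since $N^{(0)}_\la(\eta+\delta_0)_y$ is integer-valued, nondecreasing in $\la$ (Lemma \ref{lem:stabilize}(i)), and has finite supremum $N^{(0)}(\eta+\delta_0)_y$ (Lemma \ref{lem:stabilize}(ii)), it attains this supremum at some finite $\la_0$. For $n$ large enough that $\gamma_n<\gamma^*(\la_0,\eta)$, the finite-volume claim and Lemma \ref{lem:avalcont}(i) give the sandwich
\[
 N^{(0)}(\eta+\delta_0)_y = N^{(0)}_{\la_0}(\eta+\delta_0)_y = N^{(\gamma_n)}_{\la_0}(\eta+\delta_0)_y \le N^{(\gamma_n)}(\eta+\delta_0)_y \le N^{(0)}(\eta+\delta_0)_y,
\]
so $N^{(\gamma_n)}(\eta+\delta_0)_y=N^{(0)}(\eta+\delta_0)_y$ for all large $n$. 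Writing $(a^{(\gamma)}_0\eta)_y=(\eta+\delta_0)_y-(2d+\gamma)N^{(\gamma)}(\eta+\delta_0)_y+\sum_{z\sim y}N^{(\gamma)}(\eta+\delta_0)_z$ and using that, for $n$ large, the equality of toppling numbers holds at $y$ and its finitely many neighbours, one gets $(a^{(\gamma_n)}_0\eta)_y=(a^{(0)}_0\eta)_y-\gamma_n N^{(0)}(\eta+\delta_0)_y\to(a^{(0)}_0\eta)_y$. Coordinatewise convergence upgrades to convergence in the product metric \eqref{e:dist} by dominated convergence, which proves \eqref{brom}.

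The step I expect to be the main obstacle is the finite-volume perturbation claim, and within it the control of the \emph{accumulated} effect of dissipation: a single $\gamma$-toppling lowers a height by only $\gamma$, but a site may topple up to $N_{\max}$ times, so one must verify that the total deficit $\gamma N_{\max}$ stays strictly below the threshold gap uniformly along the sequence. This is precisely where non-integrality of the heights is indispensable: it is what makes $g_{\min}$ strictly positive and allows a single $\gamma^*$ to work for the whole trajectory. Without it a site could sit exactly at height $2d$, and the equality $N^{(\gamma_n)}=N^{(0)}$ could fail on a positive-measure set, which is why the almost-sure restriction to non-integer configurations is built in at the very start.
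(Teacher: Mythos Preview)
Your proof is correct and follows essentially the same approach as the paper: both rest on the $m$-a.s.\ events that the heights are non-integer and that the $0$-avalanche at the origin is well-behaved, then deduce $N^{(\gamma_n)}(\eta+\delta_0)\to N^{(0)}(\eta+\delta_0)$ and conclude via $\caS^{(\gamma)}=\mathrm{id}-\Delta^{(\gamma)}N^{(\gamma)}$. The only difference is that the paper invokes the stronger fact $m(|\Av^{(0)}_0(\eta)|<\infty)=1$, which lets one take a single finite $\la$ containing the entire avalanche and avoid your site-by-site sandwich argument; your route via mere stabilizability plus Lemma~\ref{lem:avalcont}(i) is slightly more elaborate but equally valid.
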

\begin{proof}
We have
\begin{equation}\label{avfin}
m ( |{\mathrm{Av}}_0(\eta)| < \infty ) = 1;
\end{equation}
see \cite[Theorem 3.11]{jr}. Also,
$m ( \eta_x \not \in \{0, 1, \dots, 2d-1 \},\, x \in \mathbb{Z}^d )
= 1$. On the intersection of the two events, there
exists a random $\alpha > 0$ (depending on $\eta$),
such that
\begin{equation}\label{e:restriction}
 \text{$\eta_x \not\in [j, j+\alpha]$, for all
  $x \in {\mathrm{Av}}_0(\eta)$ and all $j = 0, 1, 2, \dots$}
  \end{equation}
Let $M = \max \{ (N^{(0)}\eta)_x : x \in {\mathrm{Av}}_0(\eta) \}$.
We claim that when $\gamma_n < \alpha (M+1)^{-1}$, then the
$\gamma_n$-topplings satisfy $N^{(\gamma_n)}\eta = N^{(0)}\eta$.
Observe that the first toppling $T^{(0)}_0$ is applied
if and only if $\eta_0 \in (2d-1+\alpha,2d)$. This means
that $T^{(\gamma_n)}_0$ is also applied. After toppling,
each $x \not=0$ in the avalanche cluster still satisfies
\eqref{e:restriction} (since the height changed by an
integer amount). On the other hand, at $x=0$, the
condition is weakened to $\eta_0 \notin [j,j+\alpha-\gamma_n]$,
which implies $\eta_0 \not\in [j,j+\alpha M (M+1)^{-1}]$.
Continuing inductively, we get that all topplings in the
computation of ${\mathcal S}^{(0)}(\eta + \delta_0)$ occur
under the $\gamma_n$-stabilization of $\eta + \delta_0$.
Due to Lemma \ref{lem:avalcont} {\em (i)}, this proves the claim.
Together with $\Delta^{(\gamma_n)} \to \Delta^{(0)}$
this implies the statement.
\end{proof}
\begin{proof}[Proof of Theorem \ref{thm:converge}]
Recall the definition \eqref{e:dist} of the metric on
$[0, 2d+\gamma)^{\mathbb{Z}^d}$, that is
\[
 {\mathrm{dist}} (\eta,\zeta)
 = \sum_{x\in\mathbb{Z}^d} 2^{-|x|} \min\{ |\eta_x-\zeta_x|, 1 \}.
\]
As a first step, for a given $\delta >0$, we will prove that
for all $\varepsilon>0$, there
exists a coupling ${\mathcal M}^{(\gamma)}$ of $m^{(\gamma)}$ and $m$ such that,
in this coupling,
with probability at least $1-\varepsilon$,
\begin{equation}\label{distance}
 {\mathrm{dist}}(a_0^{(\gamma)} (\eta^{(\gamma)}), a_0^{(\gamma)} (\eta^{(0)}))
 \leq \delta.
\end{equation}
This will allow us to deal with the convergence of the processes
for addition rates with finite support.

First, for given $\delta,\varepsilon >0$, we choose $V$ large
enough such that if $\eta,\zeta$ agree on $V$, then
${\mathrm{dist}}(\eta,\zeta)<\delta$, and such that
\begin{equation}\label{mava}
  m \left(\overline{ {\mathrm{Av}}_0(\eta)} \not\subseteq V\right)
  \leq \varepsilon.
\end{equation}
Such a choice of $V$
is possible, since avalanches
are finite with $m$-probability one by \eqref{avfin}.
By Lemma \ref{lem:avalcont}{\em (ii)},
we then have the same estimate \eqref{mava} for
${\mathrm{Av}}^{(\gamma)}_0(\eta)$.

Since $m^{(\gamma)}\to m$ weakly, and restrictions to
finite volumes $\Lambda\subseteq\mathbb{Z}^d$ of $m^{(\gamma)}$ and $m$ are
absolutely continuous with respect to Lebesgue measure,
by \cite[Proposition 1]{thor}
we have the existence of $\gamma_0 > 0$ such that
for all $\gamma<\gamma_0$ there exists a coupling ${\mathcal M}^{(\gamma)}$ of
$m^{(\gamma)}$ and $m$ such that
\begin{equation}\label{coupi}
{\mathcal M}^{(\gamma)} (\eta^{(\gamma)}_x= \eta^{(0)}_x,\ \forall x\in V) \geq 1-\varepsilon.
\end{equation}
In the coupling ${\mathcal M}^{(\gamma)}$ we then have, by \eqref{mava}
\[
 \left(a_0^{(\gamma)} (\eta^{(\gamma)})\right)_y
 = \left(a_0^{(\gamma)} (\eta^{(0)})\right)_y
\]
for all $y\in V$, with probability at least $1-3\varepsilon$.
Therefore, the probability that
the distance ${\mathrm{dist}}(a_0^{(\gamma)} (\eta^{(\gamma)}), a_0^{(\gamma)} (\eta^{(0)}))$ is
larger that $\delta$ is smaller than $3  \varepsilon$.

So far, we can conclude that
 $a_0^{(\gamma)} (\eta^{(\gamma)})\to a_0^{(\gamma)} (\eta^{(0)})$
weakly as $\gamma\to 0$. By Lemma \ref{lem:gammaconv},
$a_0^{(\gamma)} (\eta) \to a_0 (\eta)$ for $m$-a.e.~$\eta$.
Hence we have
\begin{equation}
\label{weao}
  a_0^{(\gamma)} (\eta^{(\gamma)}) \to a_0 (\eta^{(0)})
\end{equation}
weakly as $\gamma\to 0$.
Analogously, using finiteness of avalanches,
we conclude that for any finite set $B\subseteq\mathbb{Z}^d$, and
natural numbers $n_x, x\in B$, we have
\begin{equation}
  \prod_{x\in B} (a_x^{(\gamma)})^{n_x} (\eta^{(\gamma)})
  \to \prod_{x\in B} a_x^{n_x} (\eta^{(0)})
\end{equation}
weakly, as $\gamma\to 0$.
Therefore, we have convergence of the processes
$\eta^{(\gamma)}_t\to\eta^{(0)}_t$ for
addition rates with finite support, i.e., such that ${\varphi}(x)=0$
for $x\not\in D$ with $D\subseteq\mathbb{Z}^d$ finite.
\subsection{ Convergence of the semigroups for general addition rates}
The next step is to pass to general addition rates;
we use the convergence of semigroups argument,
as in the proof of \cite[Proposition 4.1]{mrs1}.
Let $S^{{\varphi},\gamma}_t$ denote the semigroup of
the process $\eta^{(\gamma)}_t$ with addition rate ${\varphi}$,
and $S^{\varphi}_t$ the semigroup of the process
$\eta_t$ (with zero dissipation) with addition rate ${\varphi}$. Both semigroups
are well-defined as long as
${\varphi}$ has finite support.
For a local function $f$, with dependence set $D_f$,
and for addition rates ${\varphi},{\varphi}'$ of finite support,
we have, similarly to the estimate (51) in
the proof of  \cite[Proposition 4.1]{mrs1}
\begin{equation*}\begin{split}
{\mathbb E}_{m^{(\gamma)}}|S^{{\varphi},\gamma}_t (f) - S^{{\varphi}',\gamma}_t (f)|
  &\leq C t \sum_{x\in \overline{D_f}}
      \sum_{y\in\mathbb{Z}^d} G^{(\gamma)} (x,y)|{\varphi}(y)-{\varphi}'(y)| \\
  &\leq C t \sum_{x\in \overline{D_f}}
      \sum_{y\in\mathbb{Z}^d} G (x,y)|{\varphi}(y)-{\varphi}'(y)|;
      \end{split}\end{equation*}
and
\[
  {\mathbb E}_{m}|S^{{\varphi}}_t (f) - S^{{\varphi}'}_t (f)|
  \leq C t \sum_{x\in \overline{D_f}}
      \sum_{y\in\mathbb{Z}^d} G (x,y)|{\varphi}(y)-{\varphi}'(y)|.
\]
Note that this estimate in \cite{mrs1} is given in the
context of   the abelian sandpile model, that is   a model with
discrete heights and no dissipation. However, it is based only on the
estimate \eqref{markest}
for the numbers of topplings, which is valid
for the   abelian avalanche
 model, and therefore it extends directly
to the abelian avalanche
  model.

Hence, if
for a sequence ${\varphi}^{(n)}$ of addition rates
of finite support, for an addition rate ${\varphi}$
(not necessarily of finite support)
and for all $x\in\mathbb{Z}^d$
\[
\sum_{y\in\mathbb{Z}^d} G (x,y)|{\varphi}(y)-{\varphi}^{(n)}(y)|\to 0
\]
as $n\to\infty$, we conclude for all local $f$, $\gamma\geq 0$,
$S^{{\varphi}^{(n)},\gamma}_t (f)$ is a Cauchy sequence
in $L^1 (m^{(\gamma)})$, and hence converges to
$\Psi:= S^{{\varphi},\gamma}_t (f)$ for
all $\gamma\geq 0$, as $n\to\infty$. This
semigroup $S^{{\varphi},\gamma}_t $ then defines
a corresponding stationary Markov process $\eta^{{\varphi},(\gamma)}_t$,
and $\eta^{\varphi}_t$ (for $\gamma=0$).
As the convergence of the semigroups implies the convergence
of the finite dimensional distributions
of the corresponding stationary processes, we conclude for all $\gamma>0$,
\begin{equation}\label{aa}
 \eta^{{\varphi}^{(n)}, \gamma}_t\to \eta^{{\varphi},\gamma}_t
\end{equation}
and
\begin{equation}\label{bee}
 \eta^{{\varphi}^{(n)}}_t\to \eta^{{\varphi}}_t
\end{equation}
as $n\to\infty$
in the sense of convergence of finite dimensional distributions.
Therefore, if ${\varphi}$ satisfies \eqref{kolibri},
let ${\varphi}^{(n)}$ denote
${\varphi}^{(n)}(x) = {\varphi}(x) I(x\in [-n,n]^d)$, then we
have for all $n\in{\mathbb N}$
\begin{equation}\label{cee}
\eta^{{\varphi}^{(n)}, \gamma}_t\to \eta^{{\varphi}^{(n)}}_t
\end{equation}
as $\gamma\to 0$. Combination of \eqref{aa},\eqref{bee},\eqref{cee}
together with a three epsilon argument then concludes the convergence
of $\{\eta^{{\varphi}, \gamma}_t:t\geq 0\}$ to $\{\eta^{{\varphi},0}_t:t\geq 0\}$
in the sense of convergence
of finite dimensional distributions, as $\gamma\downarrow 0$.
\subsection{Tightness}
To finish the proof, we have to show that the processes
$\{ \eta^{{\varphi},\gamma}_t:t\geq 0\}$ form a tight family if
${\varphi}$ satisfies \eqref{kolibri}. By definition of the product distance
between configurations, this reduces to showing that
for all $x\in\mathbb{Z}^d$, and for all $\varepsilon >0$,
\begin{equation}\label{cobra}
{\mathbb{P}}\left(\sup_{0\leq s\leq t\leq\delta} |\eta^{{\varphi},\gamma}_s(x)-\eta^{{\varphi},\gamma}_t(x)|
  \geq \varepsilon\right)\leq C_\varepsilon \delta
\end{equation}
where the constant $C_\varepsilon$ only depends on $\varepsilon$.
Let $L= \sum_{y\in\mathbb{Z}^d} {\varphi}(y) (a_y^{(\gamma)}-I)$
denote the generator of the process
$\eta^{{\varphi},\gamma}_t$, and  $f_x (\eta)=\eta(x)$. We have that
\begin{equation}\label{marti}
\eta^{{\varphi},\gamma}_t(x)-\eta^{{\varphi},\gamma}_0(x) -
\int_0^t \sum_{y\in\mathbb{Z}^d}{\varphi}(y)
\left( (a_y^{(\gamma)} \eta_s )(x)-\eta_s(x)\right) ds= M_t
\end{equation}
is a martingale with quadratic variation
\begin{eqnarray}\label{kwadvar}
<M_t, M_t>&=&\int_0^t \left(L (f^2_x)-2f_xL(f_x)\right)(\eta_s) ds
\\ \label{uitkwad}
&=& \int_0^t \left(\sum_{y\in\mathbb{Z}^d} {\varphi}(y)
  \left((a^{(\gamma)}_y \eta_s)(x)\right)^2- \eta_s(x)^2)
\right.\\ \nonumber &&+
\left.\eta_s(x) \sum_{y\in\mathbb{Z}^d} {\varphi}(y)
  \left((a^{(\gamma)}_y \eta_s)(x)\right) - \eta_s(x))\right)ds.
\end{eqnarray}
Using that the heights are uniformly bounded by a constant, we estimate
\begin{equation}\label{kwadest}
  |\left(L (f^2_x)-2f_xL(f_x)\right)(\eta_s)|
  \leq C\sum_{y\in\mathbb{Z}^d} {\varphi}(y) I
    \left((a^{(\gamma)}_y \eta_s)(x)\not=\eta_s (x)\right).
\end{equation}
Now, since
\[
{\mathbb{P}}\left((a^{(\gamma)}_y \eta_s)(x)\not=\eta_s (x)\right)\leq
\sum_{z\sim x} G^{(\gamma)}(y,z),
\]
the stationary expectation of $<M_t,M_t>$ is bounded by
\begin{equation}\label{kwadbound}
{\mathbb E}(<M_t,M_t>)\leq Ct\sum_{y\in\mathbb{Z}^d}\sum_{z\sim x} {\varphi} (y) G(y,z) < C_1 t,
\end{equation}
where \eqref{kolibri} gives the final bound.
Similarly,
\begin{equation}\label{intest}
{\mathbb E}\left|\int_s^t \sum_{y\in\mathbb{Z}^d}{\varphi}(y)\left(
(a_y^{(\gamma)}\eta_r )(x)-\eta_r(x)\right) dr\right|
\leq C_2 (t-s).
\end{equation}
Then use Markov's and Doob's inequality
to conclude \eqref{cobra}:
\begin{eqnarray}
&&{\mathbb{P}}\left(\sup_{0\leq s\leq t\leq\delta}
  |\eta^{{\varphi},\gamma}_s(x)-\eta^{{\varphi},\gamma}_t(x)|\geq \varepsilon\right)
\nonumber\\
&\leq &
{\mathbb{P}}\left(\int_0^\delta |L f_x (\eta_r)| dr\geq \frac\varepsilon{2}\right)
+
{\mathbb{P}}\left(\sup_{0\leq s\leq t\leq \delta}|M_t-M_s|\geq \frac\varepsilon{2}\right)
\nonumber\\
&\leq &\frac{C'_1 \delta}{\varepsilon}  + \frac{C'_2 \delta}{\varepsilon^2}.\nonumber
\end{eqnarray}
\end{proof}

{\bf Acknowledgements.} We thank Hermann Thorisson for
indicating us reference \cite{thor}. E.S. was supported by grants ANR-07-BLAN-0230, ANR-2010-BLAN-0108.
For financial support and
hospitality, we thank Carleton University, Leiden University,
MAP5 lab at Universit\'e Paris Descartes,
Nijmegen University, Delft University and Centre Emile Borel of Institut Henri Poincar\'e
 (part of this work was done during the
semester ``Interacting Particle Systems, Statistical Mechanics
and Probability Theory'').

\end{document}